\newtheorem*{lemma}{Lemma}
\newtheorem*{prop}{Proposition}
\newtheorem*{thm}{Theorem}
\newtheorem*{cor}{Corollary}
\newcommand{\iso}{\overset{\sim}{\rightarrow}}
\newcommand{\twoheaddownarrow}{\overset{\sim}{\twoheaddownarrow}}
\newcommand{\nc}{\newcommand}
\nc{\Ker}{\operatorname{Ker}} \nc{\rker}{\operatorname{rKer}}
\nc{\im}{\operatorname{Im}}
\nc{\stab}{\operatorname {Stab}}
\nc{\ann}{\operatorname {Ann}}
\nc{\Id}{\operatorname {Id}}
\nc{\Prim}{\operatorname {Prim}}
\nc{\Real}{\operatorname {Re}}
\nc{\Ext}{\operatorname {Ext}}
\nc{\rad}{\operatorname {rad}}
\nc{\sn}{\operatorname {sn}}
\nc{\wt}{\operatorname {wt}}
\nc{\Max}{\operatorname {Max}}
\nc{\Supp}{\operatorname {Supp}}
\begin{document}
\title[Preparation $B(\infty)$]{A Preparation Theorem for the Kashiwara $B(\infty)$ Crystal}

%\title[The $B(\infty)$ Crystal]{Dual Kashiwara Functions for the $B(\infty)$ Crstal}
\author[Anthony Joseph]{Anthony Joseph}

\date{\today}
\footnote{Work supported in part by the Binational Science Foundation, Grant no. 711628}
\maketitle

\vspace{-.9cm}\begin{center}
Donald Frey Professional Chair\\
Department of Mathematics\\
The Weizmann Institute of Science\\
Rehovot, 76100, Israel\\
anthony.joseph@weizmann.ac.il
\end{center}\

Key Words: Crystals, Kac-Moody algebras.
\medskip

 AMS Classification: 17B35

%\title[Preparation $B(\infty)$]{A Preparation Theorem for the Kashiwara $B(\infty)$ Crystal}
%
%\authors{Anthony Joseph \address
%Department of Mathematics
%\\
%Weizmann Institute of Science, \\ Rehovot, 76100,
%Israel \email anthony.joseph@weizmann.ac.il}

 %\footnotetext[1]{Work supported in part by
%\mathbf{}Israel Science Foundation Grant, no. 797/14.}
%

\date{\today}
\maketitle

\bigskip

 Key Words: Crystals, Kac-Moody algebras.
\medskip

 AMS Classification: 17B35

\

\textbf{Abstract}.  The Kashiwara $B(\infty)$ crystal pertains to a Verma module for a Kac-Moody Lie algebra. Ostensibly it provides only a parametrisation of the global/canonical basis for the latter. Yet it is much more having a rich combinatorial structure from which one may read off a parametrisation of the corresponding basis for any integrable highest weight module, describe the decomposition of the tensor products of highest weight modules, the Demazure submodules of integrable highest weight modules and Demazure flags for translates of Demazure modules.

$B(\infty)$ has in general infinitely many presentations as subsets of countably many copies of the natural numbers each given by successive reduced decompositions of Weyl group elements. In each presentation there is an action of  Kashiwara operators determined by Kashiwara functions.  These functions are \textit{linear} in the entries.  Thus a natural question is to show that in each presentation the subset $B(\infty)$ is polyhedral.   Here a new approach to this question is initiated based on constructing dual Kashiwara functions and in this it is enough to show that the latter are also linear in the entries.

The basic hypothesis is that dual Kashiwara functions exist and can be expressed as differences of successive Kashiwara functions with non-negative integer coefficients.  This last requirement is called the positivity condition. Here one starts from an explicit linear function called the ``initial driving function''.   Then in terms of the reduced decomposition one seeks an algorithm to provide further dual Kashiwara functions which in particular will provide ``driving functions'' for the next induction step.

The present work resolves one of the two very difficult obstacles in this step-wise construction, namely that the resulting functions must satisfy a sum, or simply $S$, condition.  It depends very subtly on inequalities between the coefficients occurring in the driving function obtained from the previous step.  The only remaining obstacle, that sufficiently many functions are obtained, can at least be verified in many families of cases, though this is to be postponed to a subsequent paper.

The set $H$ of functions constructed here are described through equivalence classes of tableaux with boundary conditions and whose entries are the coefficients $\textbf{c}:=\{c_i\}_{i=1}^t$ provided by a given driving function. The latter are positive integers expressible in terms of entries in the Cartan matrix. The set $H$ admits an involution called duality. Then a subset $H(\textbf{c})$ of $H$ is constructed which satisfies the positivity condition, and compatibility with duality. It depends on the $t!$ possible orderings defined by inequalities between these coefficients.  Then $H(\textbf{c})$ is further subdivided into subsets $H_j(\textbf{c})$ of functions vanishing on the relative $j^{th}$ entry of $B(\infty)$ corresponding to the root $\alpha$.

The Preparation Theorem can be roughly described as follows.  If the Kashiwara operators defined by $\alpha$ enter at the $j^{th}$ entry, then there is an element $f \in H_j(\textbf{c})$ which dominates a given element of $H(\textbf{c})$.  This can be reformulated as an ``$S$-condition'' which is independent of any knowledge of $B(\infty)$. Its proof involves a delicate interplay between the inequalities satisfied by the coefficients $\textbf{c}$ and the expressions for the elements of $H(\textbf{c})$.

The proof of the Preparation Theorem starts from a description of the graph $\mathscr G_H$ of links whose vertices are elements of $H$ and whose edges are ``one block linkages'' of tableaux.  The inequalities between the elements of $\textbf{c}$ are encoded through ``triads'' in these graphs.  Then certain $S$-graphs $\mathscr G(\textbf{c})$ with triads compatible with the given linear order on $\textbf{c}$ and which satisfy a graphical version of the $S$ condition, are constructed.  Finally  $\mathscr G(\textbf{c})$ is identified with the subgraph $\mathscr G_{H(\textbf{c})}$ of $\mathscr G_H$ defined by $H(\textbf{c})$.

This theory has some intriguing numerology.  One finds that $|H(\textbf{c})|=2^t$, whilst $|H|=C(t+1)$, where $C(t)$ is the $t^{th}$ Catalan number.  Again $|H_j(\textbf{c}|$ is a power of $2$ depending on the choice of $j$ \textit{and} the inequalities between the elements of $\textbf{c}$, excluding thereby the possibility of any simplification coming from an action of the symmetric group $S_t$. On the other hand $|H_j|=C(j-1)C(t-j+1)$, whilst for all $j$, the common intersection of the $H_j(\textbf{c})$ over all possible choices of inequalities has cardinality $1$.  Again because of coincidences the number of distinct graphs $\mathscr G(\textbf{c})$ is generally less than $t!$ and indeed quite remarkably equal to $C(t)$.

\section{Introduction}\label{1}

\subsection{}\label{1.1}

Let $U_q(\mathfrak g)$ be the quantized enveloping algebra of a Kac-Moody algebra $\mathfrak g$ corresponding to a symmetrizable Cartan matrix. With respect to a choice of a Cartan subalgebra $\mathfrak h$ of $\mathfrak g$ and $\pi$ a choice of simple roots, let $U^+$ be the subalgebra of $U_q(\mathfrak g)$ generated by the simple root vectors. Let $W$ denote the (Weyl) group generated by the simple reflections acting on $\mathfrak h^*$.  A highest weight vector of a $U_q(\mathfrak g)$ module is one annihilated by the augmentation ideal of $U^+$ which is also an eigenvector for the action of the torus $T$ whose eigenvalue, called its weight, can be identified with an element $\lambda \in \mathfrak h^*$.  A highest weight module is one generated by a highest weight vector.

Partly inspired by the work of Lusztig \cite {Lu1}, Kashiwara \cite {Ka1} studied a $q \rightarrow 0$ limit of a highest weight module $V$ of $U_q(\mathfrak g)$ obtaining thereby what he called a crystal $B$ or crystal graph.  The vertices of this graph correspond to a ``global" basis of $V$ consisting in particular of weight vectors obtained by ``reversing" the $q \rightarrow 0$ limit.  The edges of $B$ labelled by $\alpha \in \pi$ are given Kashiwara operators $e_\alpha, f_\alpha$ which partly recover the action of the simple root vectors on the global basis.

  In addition each element $b \in B_J$ is assigned a weight $\wt b$ with the rule that $\wt (e_\alpha b)=\wt b +\alpha$. Then the formal character of B is defined to be the sum $\sum_{b\in B_J} e^{\wt b}$.  If $V$ is a Verma module of highest weight $0$, Kashiwara's construction gives what we call the Kashiwara $B(\infty)$ crystal. It turns out to have some quite remarkable properties some which are noted in the subsection below.

\subsection{}\label{1.2}

First the global basis coincides with Lusztig's canonical basis \cite {GL}.

Secondly $B(\infty)$ may be considered as just a combinatorial object.  Yet from this combinatorial structure one may deduce for any dominant integral weight $\lambda$ the structure of the crystal $B(\lambda)$ corresponding to the integrable highest weight module $V(\lambda)$ of highest weight $\lambda$.

 For all $w\in W$ let $V_w(\lambda)$ denote the $U^+$ module generated by the unique up to scalars weight vector of weight $w\lambda$.  It is called the Demazure module of extremal weight $w\lambda$.  Let $M$ be $TU^+$ module. A descending sequence of submodules of $M$ whose successive quotients are Demazure modules is called a Demazure flag for $M$.

Kashiwara \cite {Ka2} showed that a sub-basis of the global basis of $V(\lambda)$ is basis of $V_w(\lambda)$.  Let  $B_w(\lambda)$ be the corresponding subset of $B(\lambda)$.  It is not quite a sub-crystal.  Yet one may give a combinatorial version of a Demazure flag for certain tensor products  \cite {J4}, \cite {L3}.  Again Kashiwara \cite {Ka2} defined a $\lambda \rightarrow \infty$ limit $B_w(\infty)$ of $B_w(\lambda)$.  When $W$ is finite and $w_0$ its unique longest element one has $B_{w_0}(\infty)=B(\infty)$.

Otherwise we view $B(\infty)$ as a direct limit of the $B_w(\infty): w \in W$.

Perhaps the most remarkable result of Kashiwara \cite {Ka1} especially for our purposes is the existence of an involution $\star$ on $B(\infty)$. It is highly non-trivial, even though it comes from a rather trivial antiautomorphism of $U^+$. Since $B(\lambda)$ can be viewed as a subset of $B(\infty)$,  we can view $B(\lambda)^\star$ also as a subset of $B(\lambda)$, generally different to $B(\lambda)$.

Let $\mu, \lambda$ be dominant integral weights. Then by \cite {J4} the weights of the intersection $B(\mu)\cap B(\lambda)^\star$ are just the highest weights of the irreducible components of the tensor product $V(\mu)\otimes V(\lambda)$.

In view of the above it is not surprising that the combinatorial structure of $B(\infty)$ is extremely complicated even though the Verma module itself is rather simple.  This is most starkly illustrated by the question of computing  the formal character of $B(\infty)$.  Since $B(\infty)$ is the parametrization of the basis of a Verma module (of highest weight $0$) it must have the same formal character as the latter, which of course is rather easy to compute.  Yet it is still not known how to do this purely combinatorially except if $W$ is finite.  One does have a formula for the formal character of $B_w(\lambda)$ through Demazure operators, but the resulting formula for $B_w(\infty)$ by taking limits is not very transparent and gives little hint as to the formal character of the direct limit.

  By contrast if $W$ is finite, then $B_{w_0}( \lambda)$ has a simple formula because its formal character must be $W$ stable (see \cite [Cor. 6.3.16]{J3} - the argument is due to Demazure). From this we can take a $\lambda \rightarrow \infty$ limit to obtain the formal character of $B(\infty)$ .  We can therefore anticipate a much greater difficulty in studying $B(\infty)$ when $W$ is infinite.

\subsection{}\label{1.3}

The observations of \ref {1.2} motivate the search for a precise description of $B(\infty)$. In this let us recall its construction due to Kashiwara \cite {Ka2}.

Let $J$ be a sequence of simple roots in which every simple root occurs sufficiently many times.  In this it is enough to take $J_w:w\in W$ to be the sequence defined a reduced decomposition of $w$ and to view $J$ as an appropriate limit of the $J_w: w \in W$.  In particular if $W$ is finite we may choose $J$ to be some $J_{w_0}$. In the latter case $|J|$ is just the number of positive roots of $\mathfrak g$. In general $J_w$ constructs $B_w(\infty)$.

Kashiwara \cite {Ka1} defined a crystal $B_J$ whose elements lie in the set $\mathbb N^{|J|}$ of all sequences of length $\leq |J|$ of natural numbers with only finitely many non-zero entries.  The rules for applying $e_\alpha, f_\alpha$ to $B_J$ are given in terms of what we call the Kashiwara functions which notably are \textit{linear} in the entries.

$B_J$ admits a canonical element $b_\infty$ corresponding to all entries being $0$.  Let $B_J(\infty)$  be the subset of $B_J$ generated by the action of $f_\alpha: \alpha \in \pi$ on $b_\infty$.  A remarkable result of Kashiwara \cite {Ka2} is that $B_J(\infty)$ is stable under the action of the $e_\alpha:\alpha \in \pi$.

As a crystal $B_J$ depends on the choice of $J$.   More specifically when $W$ is finite, $B_J$ depends on the choice of a reduced decomposition of $w_0$.  Again $B_J(\infty)$ as a subset of $\mathbb N^{|J|}$ also depends on $J$.  A remarkable result of Kashiwara \cite {Ka2} is that $B_J(\infty)$  as a \textit{crystal} is independent of $J$.  This crystal is $B(\infty)$.

An intriguing question arising from this construction is whether $B_J(\infty)$ is a sub-semigroup of $\mathbb N^{|J|}$.  Of course this would mean that there can be many different semi-group structures on $B(\infty)$.  Optimistically at least one of these semi-group structures would be free and then the weights of the generators would be the negative roots of $\mathfrak g$.  Unfortunately this generally fails though it does hold for $\mathfrak g$ simple of type $A$.

 \subsection{}\label{1.4}

 A natural way to settle the question raised in \ref {1.3} is to show that $B_J(\infty)$ is a polyhedral subset of $B_J$, that is to say given by a family of \textit{linear} inequalities on the entries. Ultimately we would like to compute the linear functions involved.

 An intriguing suggestion as how to achieve the above result was put forward by Nakashima and Zelevinsky \cite {NZ} using differences of successive Kashiwara functions.  Yet they could only establish the required result up to a conjecture which is false in almost all cases.  Roughly speaking they were rather indiscriminate as to which differences should appear.

 For $\mathfrak g$ simple of type $A$ a beautiful solution to the construction of the desired inequalities was given by Gleizer and Postnikov \cite {GP} using wiring diagrams.  One can interpret their construction as specifying \textit{ exactly} which differences should appear.  However it seems practically impossible to understand how this should generalize to other cases.

 Indeed Berenstein and Zelevinski \cite {BZ} showed that $B(\infty)$ is polyhedral when $W$ is finite by a completely different method.  In this the inequalities obtained are less transparent.

 \subsection{}\label{1.5}

 Here we initiate a different approach to showing that $B(\infty)$ is polyhedral through the use of the Kashiwara involution.  It is rather easy to show that if what we call the ``dual Kashiwara functions" \textit{exist} (one for each $\alpha \in \pi$) and are linear, then $B(\infty)$ is polyhedral.  As might be expected from the above discussion these dual Kashiwara functions should be expressed as differences of successive Kashiwara functions; but that their exact choice will be a\textit{ very delicate matter.}  That such a choice is possible is the essence of the Preparation Theorem.

 \subsection{}\label{1.6}

  When $W$ is finite it is possible to relate the different presentations of $B_{w_0}(\infty)$ obtained as above through the various reduced decompositions of $w_0$.  One combines the results of Kashiwara \cite {Ka2} in rank $2$ with the machinery of tropical semi-fields which gives the required result in terms of rational functions. In this addition is presented as taking a maximum, whilst multiplication and division are addition and substraction respectively. From this one may \textit{in principle} compute dual Kashiwara functions.  The only snag is that we need to know that of these rational functions the ones we need are Laurent polynomials.  Nevertheless this method is a useful computation tool as computer calculations can check Laurent polynomiality.

  %In the general case one may compute dual Kashiwara functions inductively; but even in the simplest cases this takes tens of pages of computation.  The difficulty is inherent in the enormous complexity of the Kashiwara involution.

  Again in the case of type $A$ wiring diagrams can be used  to obtain dual Kashiwara functions.  However there are some difficulties extending this theory. First of all in this method only defines dual Kashiwara functions on $B_J(\infty)$, and not in $B_J$ as in the case of the Kashiwara functions themselves.  Again the solution obtained in type $A$ is obtained by adjoining functions associated to combining ``faces"  defined by the wiring diagram.  Outside type $A$ one may need to use the same face several times and it is quite unclear how many times.

 Our method to compute the dual Kashiwara functions does not depend on the finiteness of $W$.  It uses the fact due to Kashiwara \cite {Ka2}, that the dual Kashiwara operators almost commute with the Kashiwara operators themselves. This implies that the dual Kashiwara functions must have maxima which are ``almost invariant'' on $B_J(\infty)$.  We define them on $B_J$ by extending this almost invariance by induction on reduced decomposition.  In turns out that except in the first step which provides the ``initial driving function'' we can assume ``invariance" always holds.  We remark that for type $A$ this driving function can be interpreted as the unique open face (for the corresponding $\alpha \in \pi$) in the foot of the wiring diagram.
 \subsection{}\label{1.7}

 The construction of a set of linear functions whose maxima admit ``invariance", forms the subject of the present paper.  It leads to problem which is of consuming interest in its own right.

 In this the problem can be formulated without any reference to crystals and the notion of ``invariance".

 At each step of the reduced decomposition defined by an element $\alpha \in \pi$, we start from a driving function $h$ obtained from the previous step.  It depends on a set of $\textbf{c}=c_1,c_2,\ldots,c_t$ of $t$ natural numbers and $t+1$ indeterminates $m_1,m_2,\ldots, m_{t+1}$ representing entries in $B_J$ corresponding to $\alpha$.  Then we construct a set
of functions $H^{t+1}$, or simply $H$, with subsets $H^{t+1}(\textbf{c})$ defined by the relative inequalities between the $\{c_i\}_{i=1}^t$. The set $H$ admits a further decomposition  $H=\sqcup_{j=1}^{t+1}H_j$, where $H_j$  denotes the subset of functions which have a vanishing coefficient of $m_j$.  The elements of these sets are defined by tableaux in a manner analogous but quite different to the derivation of a Specht basis from Young tableaux.

 The Preparation Theorem (Theorem \ref {8.6}) states that for each $j = 1,2,\ldots, t+1$ such that the Kashiwara operators enter at the $j^{th}$ place there must be a function $f \in H_j(\textbf{c})$ dominating a given function in $H(\textbf{c})$.  The significance of this result concerning invariance is explained in \ref {6.7}.

 \subsection{}\label{1.8}

 The difficulty in proving the Preparation Theorem lies in the fact that two distinct sets of inequalities must be adroitly balanced.
 %It seems almost impossible to show this just using the functions or their associated tableaux.
 Its resolution requires that we study some associated graphs.

 We define graphs $\mathscr G_{H^{t+1}}: t \in \mathbb N^+$ whose vertices are elements of $H^{t+1}$ and whose edges form  ``single block linkages" of tableaux labelled by the $c_i$.  If the element in question belongs to $H_j$, for some $j \in \{1,2,\ldots,t+1\}$, then the corresponding vertex carries the label $j$.  This is analogous but different to Gerstenhaber's well-known graph of inclusions of nilpotent orbits in type $A$.

 A linear order on $\textbf{c}$ is encoded in the graphs through ``triads". Then an ``ordered path'' is a sequence of vertices for which the indices over successive edges increase.   A graph is called an $S$-graph if for every edge $v'$ and every $j \in \{1,2,\ldots,t+1\}$, there exists an edge $v$ and an ordered path from $v'$ to $v$.  The Preparation Theorem is equivalent to showing that $\mathscr G_{H(\textbf{c})}$ is an $S$-graph.

 We provide in Section \ref {7} a way to construct an $S$-graph $\mathscr G(\textbf{c})$ by induction on $t$ admitting triads which are compatible with the linear order on $\textbf{c}$.  We show in Section \ref {8} that $\mathscr G(\textbf{c})$ is isomorphic to $\mathscr G_{H(\textbf{c})}$.  This leads to the Preparation Theorem (Theorem \ref {8.6}).  We conclude with a direct proof of the Preparation Theorem for tableaux of height one (\ref {8.7}). Even in this simple case the proof is not easy which we believe indicates the depth behind the $S$-graph construction.

\subsection{}\label{1.9}

A remarkable consequence of our construction is that $|H^{t+1}(\textbf{c})|=2^t$.  Moreover every $|H_j(\textbf{c})|$ is a power of $2$ dependent on $j$  which can be calculated from the given linear order \cite {JL2}.  For each value of $j$ these powers depend on $\textbf{c}$ in a manner which shows that there can be no reasonable action of the symmetric group $S_t$ permuting the $H(\textbf{c})$.

A further remarkable consequence of our construction is that the union of the graphs $\mathscr G_{H(\textbf{c})}$, namely $\mathscr G_{H^{t+1}}$, has as its number of vertices the Catalan number $C(t+1)$. We call a set with cardinality
$C(t)=\frac{(2t)!}{t!(t+1)!}$, a Catalan set of order $t$.  At least $66$ such sets are known \cite [Exercises] {St}, from which one may surmise that they are of considerable interest to combinatorists.  An example which may interest Lie algebraists is the set $\mathscr I_t$ of ideals of $t\times t$ strictly upper triangular matrices stable under conjugation by the diagonal.

%Set $C_t=\frac{(2t)!}{t!(t+1)!}$.  It is an integer known as the $t^{th}$ Catalan number.  Call $S$ a Catalan set of order $t$ if $|S| = C_t$.  Many different examples of Catalan sets are known. Most of these come with a symmetry, for example the set $\mathscr I_t$ of ideals of $t\times t$ strictly upper triangular matrices stable under conjugation by the diagonal.

In a subsequent paper \cite {JL2} we show that $H^{t+1}$ is a Catalan set of order $t+1$ and further calculate $|H^{t+1}_j|$.  Moreover $H^{t+1}$ lacks the symmetry which the known Catalan sets often exhibit and this lack of symmetry is quite fundamental being related to the lack of symmetry of $e_\alpha$ (or $f_\alpha$) insertions in $B_J$.  Again $H^{t+1}$ comes with two sets of labellings which translate to two sets of labellings on $\mathscr G_{H^{t+1}}$.   In particular this labelled graph structure is shown \cite {JL2} to canonically determine the $S$-subgraphs $\mathscr G(\textbf{c})$.  Even dropping the labelling, $\mathscr G_{H^{t+1}}$ is quite different to say the graph $\mathscr G(\mathscr I_t)$ of inclusions in $\mathscr I_t$. Finally whereas the number of $S$ graphs is $t!$, they are not all distinct and the number of distinct $S$-graphs is again a Catalan number, namely $C(t)$ \cite {JL2}.
%the decomposition of $H^{t+1}$ into the $C(t)$ subsets $\mathscr G(\textbf{c})$ is \textit{canonical} being determined by the structure of the labelled graph $\mathscr G_{H^{t+1}}$.

All this seems to be new.

\subsection{}\label{1.10}

In the above it is not necessary to assume that the Cartan matrix of $\mathfrak g$ is symmetrizable except in so far as canonical/global bases are concerned.  Thus one may replace the Kashiwara theory of crystals by Littelmann's realization through piecewise linear paths.  This does not use $U_q(\mathfrak g)$ and so does not require symmetrizability.  In \cite {J5} it is shown how to define $B(\infty)$ in this framework and define an involution on it. In this manner the theory becomes purely combinatorial. Through \cite {JL}, \cite {L} it may be further possible to extend these considerations to Borcherds modification of the Cartan matrix.

\

\textbf{Acknowledgements.}   I would like to thank S. Zelikson for pointing out to me that one may compute the dual Kashiwara functions on $B_J(\infty)$ through the techniques of Gleizer-Postnikov \cite {GP} describing $B_J(\infty)$ as a polyhedral subset of $B_J$. This was an important starting point and it is unlikely that this paper could have got off the ground without it even though it is a much simpler case in which all the coefficients $\{c_i\}_{i=1}^t$ are equal and only $t+1$ functions (rather than $2^t$) are needed at each step. (It will be shown in a subsequent paper that the functions obtained  by Zelikson from \cite {GP} may be extended to $B_J$. This was also an important step as one can hardly hope to define functions on the unknown subset $B_J(\infty)$ of $B_J$ in general.) In addition Zelikson's computer computations for the dual Kashiwara functions in type $D_5$ using tropical semi-fields showed that for certain reduced decompositions in type $D_5$ one can have $t=3$ with the coefficients neither increasing nor decreasing.  Finally Zelikson pointed out to me that $|H^{t+1}|=C(t+1)$, for $t\leq 4$ and suggested that this should hold in general.

I should like to thank P. Lamprou for some computations which showed that one must go beyond diagrams of height $2$, as it eventually turned out already for $t=3$. Again her computations for $t=4$ extending mine for $t=3$ suggested the construction of $S$-graphs.  I originally gave a proof that these graphs satisfied an evaluation condition (see Corollary \ref {8.3}) but she was able to give a direct proof reproduced with her permission in Section \ref {7.6}.  I would like also to thank her for computations in the subsequent paper \cite {JL2} which intermesh with the present work and to thank both Lamprou and Zelikson concerning \cite {J6} which is to apply the present results and notably was their inspiration.

 I would like to thank V. Hinich for discussions concerning \ref {2.3.1}.

\section{Diagrams}\label{2}

\subsection{Unordered Partitions}\label{2.1}

Let $t$ be a non-negative integer and set $T:=\{1,2,\ldots,t\}$, $\hat{T}:= \{1,2,\ldots,t+1\}$.  A diagram $\mathscr D$ of order $t+1$ is a presentation of an unordered partition of a non-negative integer into $\leq t+1$ parts.

More precisely $\mathscr D$
is a collection of $t+1$ columns $C_1,C_2,\ldots,C_{t+1}$ placed on the $x$-axis with the subscript denoting the $x$ co-ordinate of the corresponding column. We view the columns as a stack of square blocks of equal size with the lowest block of each column (if it is not empty) having the same $y$ co-ordinate.  The rows $R_i:i=1,2,\ldots,$ are indexed by their $y$ co-ordinate. The height of $C_i$ is defined to be the number of blocks it contains and denoted by $ht C_i$. Then $\mathscr D$ is just the presentation of the unordered partition $(htC_1,htC_2,\ldots,htC_{t+1})$.  Since this paper is already long we shall use this device to desist from drawing diagrams (and tableaux) perhaps to the chagrin of some readers.

Define the height function $ht_{\mathscr D}$ on $\mathscr D$ by $ht_{\mathscr D}(i)=ht C_i$ and let $ht \mathscr D $ denote its maximal value.

\subsection{Boundary Conditions and the $*$ Operation}\label{2.2}

Let $s$ be a positive integer. Distinct columns $C,C'$ having height $\geq s$ of a diagram $\mathscr D$ of height $s$ are said to be neighbouring at level $s$ if every column of $\mathscr D$ between $C,C'$ has height $<s$.  A left (resp. right) extremal column $C$ of $\mathscr D$ is one which has no neighbours to the left (resp. right) at level $ht C$.

\textit{Boundary Conditions.}  The left (resp. right) boundary condition on $\mathscr D$ is that the height of any extremal left (resp. right) column of $\mathscr D$ must be either even (resp. odd) or the height of $\mathscr D$.

From now on all our diagrams will satisfy the boundary conditions, meaning both the left and right boundary conditions. Notice that this means that $C_{t+1}$ cannot be empty except if $\mathscr D$ is empty.

Given a diagram $\mathscr D$ we may obtain a new diagram $\mathscr D^*$ by interchanging $C_i$ with $C_{t+2-i}:i \in \hat{T}$ and increasing the height of every column by one.  We call the map $\mathscr D \mapsto \mathscr D^*$ the $*$ operation.  Eventually we shall introduce an equivalence relation on diagrams, so that it becomes involutive.  Then it will be called duality.

\subsection{Equivalence Classes}\label{2.3}

\subsubsection{Dominoes}\label{2.3.1}

Let $i$ a non-negative integer. A domino $D_{i+1}^{i+2}$, or simply $D$ if $i$ is unspecified, is a pair of blocks lying in rows $R_{i+1},R_{i+2}$ and the same column.

Consider a domino $D_{i+1}^{i+2}$ being adjoined to a column $C$ of height $i$ in a diagram $\mathscr D$ to give a new column $C \sqcup D$ of height $i+2$.  Then $D_{i+1}^{i+2}$ is called an even (resp. odd) domino if $i$ is even (resp. odd) and called a left (resp. right) domino if $C \sqcup D$ is the left (resp. right) neighbour at levels $i+1,i+2$ of a column $C'$ of height $\geq i+2$ in the new diagram.

The diagram $\mathscr D \sqcup D$ obtained by adjoining a left even or right odd domino $D$ to a diagram $\mathscr D$ again satisfies the boundary conditions. We call the map $\mathscr D \rightarrow \mathscr D \sqcup D$, domino adjunction. It commutes with the $*$ operation in the following sense.  If $D$ can be adjoined to $C_i$ in $\mathscr D$ to give $\mathscr D \sqcup D$, then $D$ can be adjoined to $C_{t+2-i}$ in $\mathscr D^*$ to give $\mathscr D^* \sqcup D$ and this is equal to $(\mathscr D \sqcup D)^*$.

Starting from a diagram $\mathscr D$ we obtain a graph $\mathscr G_{\mathscr D}$ whose vertices are diagrams with directed edges given by domino adjunction.  It is connected, directed and graded (by number of boxes). We call it the weak equivalence graph of $\mathscr D$. Its vertices form the weak equivalence class of $\mathscr D$ with respect to domino adjunction.

Since adjoining dominoes does not increase height, a diagram in the equivalence class of $\mathscr D$ has at most $(t+1)ht \mathscr D$ blocks.  Thus the equivalence graph of $\mathscr D$ is finite.

A diagram $\mathscr D$ is said to be complete (resp. deplete) if no left even or right odd dominoes can be adjoined (resp. removed) from $\mathscr D$.
%Thus a diagram is complete (resp. deplete) if it has the maximum (resp. minimum) number of blocks in its weak equivalence class.

\begin {lemma} A complete (resp. deplete) diagram in its weak equivalence class of $\mathscr D$ is unique.
\end {lemma}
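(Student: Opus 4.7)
The plan is a Newman-style diamond-lemma argument applied to domino adjunction as a rewriting relation on diagrams. First, for existence: since each adjunction adds exactly two blocks and every diagram in the weak equivalence class of $\mathscr D$ has at most $(t+1)\,ht\,\mathscr D$ blocks (as remarked just before the lemma), any chain of successive adjunctions terminates at a complete diagram. Symmetrically, deletion of a domino strictly decreases the block count and is bounded below by zero, so iterated deletion terminates at a deplete diagram. Existence of complete and deplete representatives is therefore automatic; the content of the lemma is their uniqueness.

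The main step is a strong (one-step) local confluence statement: if $D_1\neq D_2$ are both adjoinable to $\mathscr D$, then $D_1$ is still adjoinable to $\mathscr D\sqcup D_2$, $D_2$ is still adjoinable to $\mathscr D\sqcup D_1$, and $(\mathscr D\sqcup D_1)\sqcup D_2=(\mathscr D\sqcup D_2)\sqcup D_1$. Because a domino adjoined to a column $C$ must sit at its top, $D_1$ and $D_2$ live on distinct columns $C_{k_1}\neq C_{k_2}$ and commutativity at the level of block stacks is immediate; the substance is that the neighbor-witness certifying each $D_i$ survives the adjunction of the other.

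I would verify this by a case analysis on (i) whether each $D_i$ is left-even or right-odd, (ii) the relative positions of $k_1,k_2$, and (iii) the heights $i_1,i_2$ of the columns beneath them. The delicate subcase, which I expect to be the main obstacle, is when $D_1$ is, say, left-even with right-witness $C_{k'}$ and $D_2$ is adjoined to a column $C_{k_2}$ lying strictly between $C_{k_1}$ and $C_{k'}$. There the parity of $i_1,i_2$ together with the intermediate-height inequality $i_2<i_1+2$ forces either $i_2+2<i_1+2$, in which case $C_{k'}$ still serves as the witness for $D_1$, or $i_2+2=i_1+2$, in which case $C_{k_2}$ itself becomes a closer right-witness for $D_1$, with its own intermediate-height condition inherited from the one previously satisfied between $C_{k_1}$ and $C_{k'}$. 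The remaining configurations are routine, and the mirrored case of a right-odd $D_1$ follows by applying the $*$ operation.

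Strong confluence combined with termination gives uniqueness of the complete representative in each weak equivalence class. For the deplete case, note that a domino $D$ is removable from $\mathscr D$ precisely when it is a left-even or right-odd domino for $\mathscr D\setminus D$, so the same diamond argument read in reverse yields local confluence of removal; together with termination of the removal process this gives uniqueness of the deplete representative.
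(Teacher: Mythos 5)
Your overall strategy is the same as the paper's: domino adjunction is a terminating rewriting system (the block count in a weak equivalence class is bounded), and uniqueness of the complete representative follows from one-step local confluence via a Newman/diamond argument on the connected, graded weak equivalence graph, with the deplete case as the mirror image. You have also correctly isolated the only nontrivial configuration, namely $D_2$ landing on a column $C_{k_2}$ strictly between $C_{k_1}$ and the witness $C_{k'}$ of $D_1$.

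However, your resolution of that configuration has a genuine gap. Since $C_{k_1}\sqcup D_1$ and $C_{k'}$ must be neighbours at \emph{both} levels $i_1+1$ and $i_1+2$, the correct intermediate-height bound is $i_2\leq i_1$, not $i_2<i_1+2$; and within that range your dichotomy mishandles the case $i_2=i_1-1$. There $C_{k_2}\sqcup D_2$ has height exactly $i_1+1$: it falls in your first branch, yet it destroys the neighbour relation at level $i_1+1$ (so $C_{k'}$ does \emph{not} still serve as witness for $D_1$), while being too short to serve as a witness itself. What actually happens is that this configuration cannot occur: $i_2=i_1-1$ is odd (as $i_1$ is even for a left-even $D_1$), so $D_2$ must be a right odd domino, whence $C_{k_2}\sqcup D_2$ needs a \emph{left} neighbour $C$ of height $\geq i_1+1$ at level $i_1$. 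That $C$ cannot lie weakly to the left of $C_{k_1}$, since $C_{k_1}$ has height $i_1$ and would break the level-$i_1$ neighbour relation between $C$ and $C_{k_2}$; so $C$ lies strictly between $C_{k_1}$ and $C_{k'}$ with height $\geq i_1+1$, contradicting that these two were neighbours at level $i_1+1$ in $\mathscr D$. This contradiction is precisely the content of the paper's proof of local confluence, and without it your first branch is false as stated; the rest of your plan goes through once this subcase is excluded.
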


\begin {proof}   Suppose we can adjoin a vertical domino $D$ to a column $C_i$ of $\mathscr D$ to obtain a new column $C_i\sqcup D$ of height $s+2$ in $\mathscr D \sqcup D$.  This means that $s$ is either even (resp. or odd) and $C_i\sqcup D$ has a right (resp. left) neighbour $C^r$ (resp. $C^\ell$) of height $\geq s+2$ at levels $s+1,s+2$.

Via the $*$ operation we can assume that $s$ is even.

Now suppose we can adjoin a vertical domino $D'$ to a column $C_j$ of $\mathscr D$ to obtain a column $C_j\sqcup D'$ in a new diagram $\mathscr D \sqcup D'$.  We claim that we can still adjoin $D$ to the column $C_i$ of $\mathscr D \sqcup D'$.

This is obvious if $i=j$.  If $i\neq j$ and we cannot adjoin $D$ to $C_i$ in $\mathscr D \sqcup D'$, then $C_j\sqcup D'$ must have height $s+1$ and lie strictly between $C_i$ and $C^r$.  Yet we can adjoin $D'$ to $C_j$ in  $\mathscr D$, so $C_j$ must have a left neighbour $C$ at level $s-1$ of height $\geq s+1$.  Again $C_i$ has height $s$, so $C$ lies strictly to the right of $C_i$ and strictly to the left of $C^r$,  contradicting that we can adjoin $D$ to $C_i$ in $\mathscr D$.

By the same argument we can adjoin $D'$ to $C_j$ in $\mathscr D \sqcup D$.  Thus $D,D'$ can be adjoined in either order to $\mathscr D$.  Moreover the result is obviously the same.

This means that in the weak equivalence graph $\mathscr G_{\mathscr D}$, any two outgoing edges from a vertex can be completed to a commuting square. A similar assertion holds for any two incoming edges to a vertex.  Finally recall that $\mathscr G_{\mathscr D}$ is connected and graded.  Hence the assertions (see Remark 1).

\end {proof}

\textbf{Remark 1.}  In the above we have used the following result which is surely well-known, but for lack of a reference we give details.

Let $\mathscr G$ be a connected, directed graph whose set $V(\mathscr G)$, or simply, $V$ of vertices is finite.  Assume that $\mathscr G$ is graded by a map $f: V \rightarrow \mathbb N^+$ with the property that $f(v)=f(v')+1$ if there is a directed edge from the vertex $v'$ to the vertex $v$ in $\mathscr G$.  Suppose further that given any three vertices $v_1,v_2,v_3 \in V$ and directed edges $v_1\rightarrow v_2,v_1\rightarrow v_3$, there exists $v_4 \in V$ and directed edges $v_2\rightarrow v_4, v_3\rightarrow v_4$.

Call $v \in V$ maximal if there is no $v' \in V$ with a directed edge $v \rightarrow v'$.

\begin {lemma}  $\mathscr G$ admits a unique maximal vertex.
\end {lemma}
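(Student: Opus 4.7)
The plan is to establish existence by finiteness plus gradedness, and then uniqueness by promoting the local diamond hypothesis into a Church--Rosser style global confluence property, combined with the connectedness of $\mathscr G$. Existence is immediate: since $V$ is finite, $f$ attains a maximum value $M$ on $V$, and any sequence of directed edge traversals starting from an arbitrary $v$ must terminate in at most $M - f(v)$ steps because $f$ strictly increases along each directed edge. Its terminus is necessarily a maximal vertex.

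For uniqueness I would first show that every $v \in V$ admits a unique maximal vertex $m(v) \in V$ reachable from $v$ by a directed path, proving this by induction on $M - f(v)$. If $v$ is already maximal, set $m(v) = v$. Otherwise, given any two outgoing edges $v \rightarrow v_1$ and $v \rightarrow v_2$ (the case of a single outgoing edge reducing trivially to the inductive hypothesis applied to that successor), the diamond hypothesis produces $v_4 \in V$ with $v_1 \rightarrow v_4$ and $v_2 \rightarrow v_4$. The induction hypothesis applied to $v_1, v_2, v_4$ supplies well-defined maximal vertices $m(v_1), m(v_2), m(v_4)$; since $v_4$ is a direct successor of $v_1$ (resp.\ $v_2$), the unique maximal terminus reachable from $v_1$ (resp.\ $v_2$) must coincide with that reachable from $v_4$, forcing $m(v_1) = m(v_4) = m(v_2)$. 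Hence every outgoing edge from $v$ leads to the same maximal vertex, and $m(v)$ is well defined.

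Finally, given two maximal vertices $v, v' \in V$, connectedness of $\mathscr G$ provides an undirected path $v = u_0, u_1, \ldots, u_n = v'$. For each consecutive pair $u_i, u_{i+1}$, whichever way the edge between them is oriented, one is a direct successor of the other, so the preceding step yields $m(u_i) = m(u_{i+1})$. Telescoping gives $v = m(v) = m(u_0) = m(u_n) = m(v') = v'$. The main obstacle is the confluence step, since the hypothesis supplies only the local diamond condition and one must propagate it along directed paths of arbitrary length; strict gradedness together with finiteness makes the induction on $M - f(v)$ well-founded, which is precisely what resolves this difficulty.
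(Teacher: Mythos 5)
Your proof is correct. It differs from the paper's argument in a way worth noting: the paper fixes an undirected path between two putative maximal vertices and performs surgery on it, locating the vertices where $f$ attains its minimum value on the path (these must be interior, with both incident edges directed outward), applying the diamond hypothesis to lift each such valley, and deriving a contradiction because the minimum of $f$ along the path is bounded above. You instead prove the stronger, global statement that every vertex $v$ has a well-defined ``normal form'' $m(v)$ --- the unique maximal vertex reachable from $v$ by a directed path --- by a well-founded induction on $M-f(v)$ in the style of Newman's diamond lemma, and only then invoke connectedness to telescope $m$ along an arbitrary undirected path between two maximal vertices. Both arguments consume exactly the same three hypotheses (finiteness with grading, the local diamond, connectedness); yours packages the confluence step as a reusable function $m$ on all of $V$ and avoids the slightly delicate bookkeeping of simultaneously replacing several minimum vertices on one path, at the cost of proving more than the statement strictly requires. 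Either route is complete and correct.
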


\begin {proof}  Existence results from the finiteness of $V$.  Let $v,v'$ be distinct maximal vertices.  Since $\mathscr G$ is connected there is a path $p:(v'=v_1 \rightarrow v_2 \rightarrow \ldots \rightarrow v_n = v)$ from $v'$ to $v$ in $\mathscr G$.  We deduce a contradiction by considering the minimal value $m(p)$ taken by $f$ on the vertices of  a given path $p$.   Since $m(p)$ is bounded above by the maximum of $f(v'),f(v)$, we can assume that $m(p)$ takes it maximal value for all such paths.

Choose $i=1,2,\ldots,n$ such that $f(v_i)=m(p)$.  One cannot have $i=1,n$ since $v_1,v_n$ are maximal vertices.  Then by the hypothesis on $\mathscr G$ we can replace $v_i$ by $v'_i \in V$ having edges $v_{i-1}\rightarrow v_i', v_{i+1}\rightarrow  v_i'$.  Repeat for each such vertex $v_i$.  Then we have a path $p'$ from $v'$ to $v$ in $\mathscr G$ with $m(p')=m(p)+1$.  This gives the required contradiction.

\end {proof}

\textbf{Remark 2.}  Take $\mathscr D:=(3,2,1,4)$.  It is a depleted diagram.  We may adjoin a left even domino to $C_2$ to give $(3,2,3,4)$ and then a right odd domino to $C_3$ to give $(3,2,3,4)$ which is a complete diagram.  However these operations cannot be carried out in the opposite order.  The resulting weak equivalence graph is linear (in this case).

\subsubsection{}\label{2.3.2}

Let $\mathscr D$ be a complete diagram of height $r$.

\begin {lemma}

\

(i) $ht_\mathscr D C_1 \geq r-1$ and $ht_\mathscr D C_1=r$ if $r$ is even.

\

(ii) $ht_\mathscr D C_{t+1}\geq r-1$ and $ht_\mathscr D C_{t+1}=r$ if $r$ is odd.

\end {lemma}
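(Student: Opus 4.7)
The plan is to prove (i) by contradiction against completeness, and to deduce (ii) by applying (i) to the reversed diagram $\mathscr D^*$. The starting observation for (i) is that $C_1$ has no columns to its left and hence no left neighbour at any level, so $C_1$ is left extremal (when non-empty); the left boundary condition then forces $ht C_1$ to be even or equal to $r$ (and this conclusion holds trivially when $C_1$ is empty since $0$ is even).

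To prove $ht C_1 \geq r-1$, suppose for contradiction $s := ht C_1 \leq r-2$, so $s$ is even. Let $C'$ be the leftmost column strictly to the right of $C_1$ with $ht C' \geq s+1$; such $C'$ exists because $ht \mathscr D = r \geq s+2$. By the minimality of $C'$, every column of $\mathscr D$ to the left of $C'$ (including $C_1$ itself, of height $s < s+1$) has height $< s+1$, so $C'$ is itself left extremal. If $ht C' = s+1$, the left boundary condition applied to $C'$ demands $s+1$ to be even or equal to $r$; both fail, since $s$ is even and $s+1 \leq r-1 < r$. Hence $ht C' \geq s+2$. The domino $D = D_{s+1}^{s+2}$ adjoined to $C_1$ then makes $C_1 \sqcup D$ a left neighbour of $C'$ at both levels $s+1$ and $s+2$, since every intervening column has height $< s+1$ and a fortiori $< s+2$. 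Thus $D$ is a left even domino adjoinable to $\mathscr D$, contradicting completeness. For the parity refinement, if $r$ is even and $ht C_1 < r$, the boundary condition forces $ht C_1$ even while the first part forces $ht C_1 \geq r-1$; but $r-1$ is odd, leaving no admissible value, so $ht C_1 = r$.

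Part (ii) follows from (i) applied to $\mathscr D^*$. The $*$ operation commutes with domino adjunction, reverses left and right, and raises every base level by $1$ (toggling its parity); consequently it carries left even adjoinable dominoes of $\mathscr D$ to right odd adjoinable dominoes of $\mathscr D^*$ and vice versa, preserving the set of adjoinable dominoes as a whole. Hence $\mathscr D^*$ is complete, of height $r+1$. Since $ht C_1(\mathscr D^*) = ht C_{t+1}(\mathscr D)+1$, the inequality in (i) applied to $\mathscr D^*$ yields $ht C_{t+1}(\mathscr D) \geq r-1$, and when $r$ is odd (so $r+1$ is even) the equality clause of (i) gives $ht C_{t+1}(\mathscr D) = r$. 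The chief delicacy throughout is the step forcing $ht C' \geq s+2$, where the boundary hypothesis is used in an essential way beyond the immediate dichotomy for $ht C_1$.
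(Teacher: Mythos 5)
Your proof is correct and follows essentially the same route as the paper's: the left boundary condition forces $ht\,C_1$ even when it is less than $r$, the leftmost taller column is analysed (boundary condition ruling out height $s+1$, completeness ruling out a left even domino when its height is $\geq s+2$), and (ii) is obtained from (i) by the $*$ operation. The paper merely arranges the two cases in the opposite order, concluding directly that $ht\,C_1=r-1$ with $r$ odd rather than contradicting $ht\,C_1\leq r-2$.
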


\begin {proof} These assertion are equivalent via the $*$ operation.  The last part of (i) follows from the first part and the left boundary condition. Suppose $s:=ht C_1<r$.  By the left boundary conditions  $s$ must be even.  Let $i$ be minimal such that $s':=ht C_i >s$. Then $s'=s+1$, otherwise we could adjoin a left even domino to $C_1$. This forces $s'$ to be odd.  Then by the left boundary condition $s'=r$.
\end {proof}

\subsubsection{The Height Function of a Complete Diagram }\label{2.3.3}

\begin {lemma}  Let $\mathscr D$ be a complete diagram.  Then for all $i\in T$

\

(i)  $|ht_\mathscr D (i+1)-ht_{\mathscr D}(i)|\leq 2$.

\

(ii) If $ht_\mathscr D (i+1)$ is even, then $ht_\mathscr D(i+1) -ht_{\mathscr D}(i)\leq 1$.

\

(iii) If $ht_\mathscr D (i-1)$ is odd, then $ht_\mathscr D(i-\mathbf{}1) -ht_{\mathscr D}(i)\leq 1$.
\end {lemma}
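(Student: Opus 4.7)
The idea is uniform across all three parts: whenever an inequality is violated, exhibit a left even or right odd domino that can be adjoined to $\mathscr D$, contradicting completeness. The three parts differ only in which column receives the extra domino and in the argument that a suitable neighbouring column exists. I would prove (i) first, since (ii) and (iii) are immediate corollaries.

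Granting (i), parts (ii) and (iii) are easy. For (ii): if $ht_\mathscr D(i+1)$ is even and $ht_\mathscr D(i+1) - ht_\mathscr D(i) \geq 2$, then (i) forces equality $ht_\mathscr D(i+1) = s+2$ where $s := ht_\mathscr D(i)$; in particular $s$ is even. Adjoining $D := D_{s+1}^{s+2}$ to $C_i$ produces a column $C_i \sqcup D$ of height $s+2$ whose right neighbour at levels $s+1, s+2$ is the adjacent $C_{i+1}$, so $D$ is a left even domino, contradicting completeness. Part (iii) is the mirror: if $ht_\mathscr D(i-1)$ is odd and exceeds $ht_\mathscr D(i)$ by at least $2$, then $s := ht_\mathscr D(i)$ is odd and the same domino adjoined to $C_i$ has $C_{i-1}$ as its left neighbour, making it a right odd domino. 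Alternatively (iii) follows from (ii) applied to $\mathscr D^*$, using that the $*$-operation preserves completeness and converts ``even left'' into ``odd right''.

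The heart of the matter is (i). By $*$-duality one may assume $ht_\mathscr D(i+1) \geq s + 3$ with $s := ht_\mathscr D(i)$. If $s$ is even, adjoining $D_{s+1}^{s+2}$ to $C_i$ gives a column of height $s+2$ whose right neighbour at that level is $C_{i+1}$ (since $ht_\mathscr D(i+1) \geq s+3 > s+2$); thus $D$ is a left even domino. If $s$ is odd and $i = 1$, then $C_1$ is left extremal of odd height $s$, and $s < s + 3 \leq ht \mathscr D$, violating the left boundary condition directly. The only serious case is $s$ odd with $i \geq 2$. Here invoke Lemma 2.3.2(i) to get $ht_\mathscr D C_1 \geq ht\mathscr D - 1 \geq s + 2$, so some column strictly to the left of $C_{i+1}$ has height $\geq s+2$. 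Let $C_{i''}$ be the nearest such; since $ht_\mathscr D C_i = s < s+2$ we must have $i'' < i$, and every column $C_{i''+1}, \ldots, C_i$ has height $< s+2$. Then in $\mathscr D \sqcup D$ the left neighbour of $C_i \sqcup D$ at levels $s+1, s+2$ is precisely $C_{i''}$, so $D$ is a right odd domino, contradicting completeness.

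The hard part will be this $s$-odd case of (i): it is the only spot where one cannot read off a contradiction from the single pair $(C_i, C_{i+1})$, since a left odd domino adjoined to $C_i$ would itself be admissible. The global estimate $ht C_1 \geq ht\mathscr D - 1$ from Lemma 2.3.2 is what supplies the left neighbour $C_{i''}$ at level $s+2$, upgrading the prospective left odd domino to a right odd domino. This interplay between the local height jump $ht(i+1) \geq s+3$ and the boundary-forced estimate on the extremal column $C_1$ is the subtle point and explains why (i) cannot be proved by purely local considerations.
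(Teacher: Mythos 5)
Your handling of the $s$-even case of (i), the deductions of (ii) and (iii), and the reduction by $*$-duality are all correct and essentially as in the paper. The gap is in the case you yourself single out as the serious one: $s:=ht\, C_i$ odd, $i\geq 2$, $ht\, C_{i+1}\geq s+3$. You take $C_{i''}$ to be the nearest column to the left of $C_{i+1}$ of height $\geq s+2$ and assert that $C_i\sqcup D$ is then the right neighbour of $C_{i''}$ at levels $s+1,s+2$. But neighbouring at level $s+1$ requires every intervening column to have height $<s+1$, that is $\leq s$, whereas your choice of $C_{i''}$ only guarantees $\leq s+1$. If some column strictly between $C_{i''}$ and $C_i$ has height exactly $s+1$, then no column is simultaneously a left neighbour of $C_i\sqcup D$ at both levels $s+1$ and $s+2$ (the nearest column of height $\geq s+1$ is too short, and any column of height $\geq s+2$ is blocked by it), so $D$ is merely a \emph{left} odd domino and its adjoinability contradicts nothing. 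A concrete instance is $(ht\,C_1,\ldots,ht\,C_4)=(4,2,1,4)$ with $i=3$, $s=1$: here $C_{i''}=C_1$, but $C_2$ of height $2=s+1$ blocks the level-$2$ neighbour relation, and the domino added to $C_3$ is in fact a left odd domino. (That diagram is indeed incomplete, but the witness lies elsewhere.)

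The repair, which is what the paper does, is to let $C^\ell$ be the nearest column to the left of $C_i$ of height $s'\geq s+1$; it exists because $ht\,C_1\geq ht\,\mathscr D-1\geq s+2$ by Lemma \ref{2.3.2}, as you already note. By this choice every column strictly between $C^\ell$ and $C_i$ has height $\leq s$. If $s'\geq s+2$ your argument now goes through with $C^\ell$ in place of $C_{i''}$, giving a right odd domino on $C_i$. If $s'=s+1$, which is even, adjoin $D_{s+2}^{s+3}$ to $C^\ell$ instead: every column strictly between $C^\ell$ and $C_{i+1}$ has height $\leq s$, and $ht\,C_{i+1}\geq s+3$, so $C^\ell\sqcup D_{s+2}^{s+3}$ is the left neighbour of $C_{i+1}$ at levels $s+2,s+3$ and this is a left even domino, again contradicting completeness. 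In the example above this is precisely the domino on $C_2$.
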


\begin {proof}  Fix $i =1,2,\ldots,t$ and set $s = ht C_i$. Assume that $ht C_{i+1} >s$.
%Assume that $ht C_{i+1} >s$. the height function strictly increases to the right of $C$, that is to say the column $D_r:=C_{i+1}$ has height $\geq s+1$ and so in particular is the right neighbour of $C$ at level $s$.
% with $s$ a local minimum of the height function.

Assume that $s$ is even.  By completeness $ht C_{i+1}\leq s+1$. This proves (ii). Through $*$, we obtain (iii) from (ii).

Assume $s$ is odd. We claim that $C_{i+1}$ to $C$ has height at most $s+2$.  If not, $C_{i+1}$ has height $>s+2$ and then by Lemma \ref {2.3.2} there is left neighbour $C^\ell$ of $C_i$ at level $s$ of height $s'\geq s+1$. If $s'>s+1$, then one may add a vertical domino to $C_i$ contradicting completeness. Suppose $C^\ell$ has height $s'=s+1$.  Since $C_{i+1}$ has height $\geq s+3$, then one may add a vertical domino to $C^\ell$ again contradicting completeness. Combined with $*$  and (ii), (iii), this proves (i).
%Now assume that $C_{i-1}$ has height $\geq s+1$ so in particular is the left neighbour of $C_i$ at level $s$. By $*$  and the first result we conclude that if $s$ is odd (resp. even) then $C^\ell$ has height at most $s+1$ (resp. $s+2$).

%Thus if the height function takes an even value it may increase by at most one (resp. two) on passing to the right (resp. left) and if it takes an odd value it may increase by at most two (resp. one) on passing to the right (resp. left).
\end {proof}

\textbf{Remark}.  This result can be expressed by saying that if the height function takes an even value it may increase by at most one (resp. two) on passing to the right (resp. left) and if it takes an odd value it may increase by at most two (resp. one) on passing to the right (resp. left).  In particular it cannot increase by two or more on passing to both the left and the right of a given column.

\subsubsection{Half-domino adjunction}\label{2.3.4}

\

\

\textbf{Definition.} A column $C$ of a non-empty diagram $\mathscr D$ is said to be strongly extremal if it is of the maximal height $r:=ht \mathscr D$  and  is left (resp. right) extremal for $r$ odd (resp. even).

\

 This can also be expressed as saying that for $r$ odd (resp. even) $C$ is the leftmost (resp. rightmost) column of $\mathscr D$ of height $r$. As a consequence $\mathscr D$ admits exactly one strongly extremal column.  For the empty diagram we designate $C_{t+1}$ as its unique strongly extremal column.  This is compatible with its equivalence to the diagram with an even number of rows all having $t+1$ blocks.

One readily checks that if $C$ is strongly extremal, then it remains so after adjoining any left even or right odd domino to a column of $\mathscr D$ of height $\leq ht \mathscr D -2$.  Thus all diagrams in the weak equivalence class of $\mathscr D$  have the same strongly extremal column.

A single block may be adjoined to $C$ of maximal height to obtain a column $C^+$ in a diagram $\mathscr D^+$ satisfying the boundary conditions if and only if $C$ is strongly extremal. (Otherwise for $r$ odd (resp. even) a column which is a left (resp. right) neighbour to $C$ at level $s$ will violate the left (resp. right) boundary condition in $\mathscr D^+$.)  We call this operation half-domino adjunction to distinguish it from ``single block linkage" which is defined in  \ref {3.3}.

On the other hand $\mathscr D^+$ need not be complete.  By the previous paragraph its completion $\widehat{\mathscr D^+}$ can be obtained by first taking the completion $\hat{\mathscr D}$ of $\mathscr D$ adding an extra block to $C$ to obtain $\hat{\mathscr D}^+$ (since the latter can be obtained by adjoining suitable dominos to $\mathscr D^+$) and completing.

\begin {lemma}  Suppose $\mathscr D$ is complete.  If $r:=ht \mathscr D$ is even (resp. odd) then $\widehat{\mathscr D^+}$ is obtained from $\mathscr D^+$ by adjoining a domino to every column strictly to the right (resp. left) of $C^+$.
\end {lemma}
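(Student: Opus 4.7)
The plan is to establish the lemma for $r$ even and deduce the $r$ odd case from the $*$-operation, which commutes with both half-domino adjunction and completion. So assume $r$ is even; then $C=C_k$ is the unique rightmost column of $\mathscr D$ of height $r$, and every $C_j$ with $j>k$ satisfies $ht_{\mathscr D} C_j < r$.

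Introduce the candidate diagram $\mathscr E$ defined by $ht_{\mathscr E} C_i = ht_{\mathscr D} C_i$ for $i<k$, $ht_{\mathscr E} C_k = r+1$, and $ht_{\mathscr E} C_j = ht_{\mathscr D} C_j + 2$ for $j>k$. It is enough to verify (a) that $\mathscr E$ lies in the weak equivalence class of $\mathscr D^+$, and (b) that $\mathscr E$ is complete, for then the uniqueness of the complete diagram in a weak equivalence class (proved in \ref{2.3.1}) forces $\mathscr E = \widehat{\mathscr D^+}$.

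For (a), I would produce an explicit sequence of adjunctions from $\mathscr D^+$ to $\mathscr E$ by processing the columns $j=t+1,t,\ldots,k+1$ in right-to-left order: adjoin a right odd domino to $C_j$ when $s := ht_{\mathscr D} C_j$ is odd (using the column $C_k$ raised to height $r+1$, or a nearer tall column, as left neighbour), and adjoin a left even domino when $s$ is even (using a column to the right whose already-raised height is $\geq s+2$). The delicate point is the even case with $s>0$: I would establish, by contradiction, that if $\mathscr D$ is complete and $C_j$ has even height $s>0$ with $j>k$, then some $C_i$ with $i>j$ must satisfy $ht_{\mathscr D} C_i \geq s$. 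Otherwise $C_{t+1}$ would carry odd height $s_{t+1}\leq s-1$ by the right boundary condition, and since $r\geq s+2\geq s_{t+1}+2$ one could adjoin a right odd domino to $C_{t+1}$ already in $\mathscr D$ using $C_k$ (or a nearer witness) as left neighbour, against the completeness of $\mathscr D$. The case $s=0$ is automatic since each updated right column has height $\geq 2$.

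For (b), I would suppose for contradiction that a left even or right odd domino is adjoinable to some $C_j$ in $\mathscr E$, and split on the positions of $C_j$ and its witnessing neighbour $C_\ell$ relative to $C_k$. Translating $\mathscr E$-heights back to $\mathscr D$-heights via the explicit formula, the parity and neighbour conditions in each sub-case either translate directly to a valid adjunction already available in $\mathscr D$, or else force $C_\ell$ to violate either the maximality of $r$ or the rightmost-of-height-$r$ property of $C_k$. Either way, the completeness of $\mathscr D$ is contradicted. The main obstacle is this coordination between the parity shifts induced by the $+1$ at $C_k$ and the $+2$ to its right and the parity-sensitive rules for legal domino adjunction, which forces several sub-cases and a uniform argument across them.
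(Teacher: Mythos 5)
Your overall architecture is fine and close to the paper's: reduce to $r$ even via $*$, exhibit an explicit chain of legal domino adjunctions taking $\mathscr D^+$ to the candidate diagram $\mathscr E$, check that $\mathscr E$ is complete, and invoke uniqueness of the complete representative (Lemma \ref{2.3.1}). The genuine gap is in step (a): the strict right-to-left processing order $j=t+1,t,\ldots,k+1$ does not always yield a legal adjunction at each stage, and the ``delicate point'' you isolate (the even case) is not where the difficulty lies — indeed that point already follows from the right boundary condition alone, since a column of even height $s$ with $0<s<r$ and nothing of height $\geq s$ to its right would be right extremal of even height $<ht\,\mathscr D$. The real obstruction is in the odd case: to adjoin a right odd domino $D_{s+1}^{s+2}$ to a column $C_j$ of odd height $s$ you need a \emph{single} column of height $\geq s+2$ that is the left neighbour of $C_j\sqcup D$ at \emph{both} levels $s+1$ and $s+2$, and a not-yet-raised column of height exactly $s+1$ lying in between blocks this. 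Concretely take $\mathscr D=(4,3,2,1,2,3)$, which is complete of even height $4$ with $C^+=C_1^+$. Processing right to left you raise $C_6$ to height $5$ and $C_5$ to height $4$, and then reach $C_4$ of height $1$: every candidate left witness is blocked by the adjacent column $C_3$, which still has height $2$. You must first raise $C_3$ (whose right witness $C_5$ is by then available) and only afterwards return to $C_4$, so the order you prescribe breaks down.

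This is precisely why the paper processes the columns \emph{level by level} rather than column by column: first every column of height $r-1$ strictly to the right of $C^+$ receives a right odd domino witnessed by $C^+$; then every column of height $r-2$ receives a left even domino witnessed by the newly raised height-$(r+1)$ columns; then height $r-3$, and so on. At each stage the witness is guaranteed, using Lemma \ref{2.3.2} and Lemma \ref{2.3.3} for the shape of a complete height function. Your step (b) is only sketched, but the intended case analysis can be shortcut as the paper does: after the adjunctions, the portion of the diagram weakly to the right of $C^+$ is the corresponding portion of $\mathscr D^+$ with two full rows added at its foot, which changes no height differences and no parities of the relevant neighbour relations, so completeness transfers from $\mathscr D$. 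I would recommend repairing (a) by switching to the level-by-level sweep and keeping your uniqueness framing, which is a clean way to finish.
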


\begin {proof}  Through the $*$ operation we can assume that $r$ is even.  Since $\mathscr D$ is complete a domino $D=D_i^{i+1}$ can be adjoined to $\mathscr D^+$ only if $i=r$. Then $D$ must be an odd domino, hence an odd right domino and therefore can only be added to columns lying strictly to the right of $C^+$.

Since $C$ is strongly extremal the columns to the right of $C^+$ form a diagram $\mathscr D^+_1$ with $C^+$ of height $r+1$ and with every other column of height $\leq r-1$. Then by Lemma \ref {2.3.2}, since $\mathscr D$ is complete, we obtain $ht C_{t+1}=r-1$. Since $r-1$ is odd and $C^+$ has height $r+1$, we may adjoin a right odd domino to $C_{t+1}$ and indeed to every column of height $r-1$ strictly to the right of $C^+$ to obtain a new diagram $\mathscr D^+_2$ in which every column has height $r+1$, or height $\leq r-2$ and with both left and right neighbours of height $r+1$.  Since $r-2$ is even, we may adjoin a left even domino to every column of height $r-2$ to obtain a diagram $\mathscr D^+_3$ in which every column has height $r$, or height $\leq r-3$ and with both left and right neighbours of height $r$.  Continuing in this fashion we eventually obtain a diagram $\mathscr D^+_\ell$ in which to every column of $\mathscr D^+_1$ there has been adjoined a vertical domino.  Then $\mathscr D^+_\ell$ differs from $\mathscr D^+_1$ just by adding two complete rows at the foot of the columns lying strictly to the left of $C^+$.  Then $\mathscr D$ being complete implies that $\mathscr D^+_\ell$ is complete.
 %and that adjoining the columns strictly to the left of $C^+$ gives the completion of $\mathscr D^+$.
\end {proof}

\subsubsection{Repeated half-domino adjunction}\label{2.3.5}

Retain the above notation.  We may repeat the process of half-domino adjunction to $\widehat{D^+}$.  In this $C^+$ is again a strongly extremal column in $\widehat{\mathscr D^+}$ which is now of height $r+1$ which is odd (resp. even).  Then through the lemma we obtain a diagram in which \textit{every} column of $\mathscr D$ has been adjoined a vertical domino.  It can hence be obtained by just adding two complete rows to the bottom of the diagram.  The same diagram is also obtained by adjoining a vertical domino to the strongly extremal column and completing.

\subsubsection{Equal Adjacent Rows}\label{2.3.6}

If two adjacent rows $R_{i+1},R_{i+2}$ admit the same number of boxes, then the dominoes $D^{i+2}_{i+1}$ in each column may be cancelled collapsing at the same time the diagram to one of height two less without upsetting the boundary conditions.   A diagram in which no two adjacent rows can be cancelled is called reduced.

\begin {lemma}

\

(i) A deplete diagram is reduced.

\

(ii) A complete diagram is reduced if its first two rows do not coincide..
\end {lemma}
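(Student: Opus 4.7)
Both parts will be proved by contrapositive. Assume $\mathscr{D}$ is not reduced: $|R_{i+1}|=|R_{i+2}|>0$ for some $i$, which forces every column to have height $\le i$ (``low'') or $\ge i+2$ (``high''), none of height exactly $i+1$. The goal is to produce a removable left-even or right-odd domino in (i) (contradicting deplete) or an adjoinable one in (ii) (contradicting complete).

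For (ii), fix $i$ minimal with the coincidence. The hypothesis $R_1\ne R_2$ forces $i\ge 1$, and minimality forces $|R_i|>|R_{i+1}|$, so some low column has height exactly $i$. By Lemma \ref{2.3.3}(i), any adjacent low-high pair of columns must have heights exactly $i$ and $i+2$. From Lemma \ref{2.3.2} together with $ht\mathscr{D}\ge i+2$ one gets $ht C_1,\,ht C_{t+1}\ge i+1$; since no column has height $i+1$, both $C_1$ and $C_{t+1}$ are high, and the presence of a low column then forces the simultaneous existence of both a low-high and a high-low adjacency. For $i$ even, adjoining $D^{i+2}_{i+1}$ on top of the height-$i$ column in a low-high adjacency yields a left-even domino; for $i$ odd, the symmetric high-low case yields a right-odd one. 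Either way, completeness is contradicted.

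For (i), fix $i$ maximal. Then $|R_{i+3}|<|R_{i+2}|$, so some column $C$ has height exactly $i+2$, and its top domino $D^{i+2}_{i+1}$ is left-even (if $i+2$ is even and $C$ has a right neighbour at level $i+2$) or right-odd (if $i+2$ is odd and $C$ has a left neighbour), in either case contradicting deplete. Failing this, every height-$(i+2)$ column is extremal at level $i+2$ on the side forbidden by its parity; the boundary condition of \S\ref{2.2} then forces $i+2=ht\mathscr{D}$, and a further application of deplete (any two height-$(i+2)$ columns would produce the forbidden neighbour) forces $|R_{i+2}|=1$, leaving a single isolated tall column above a subdiagram of columns of height $\le i$. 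This subdiagram inherits the boundary conditions and remains deplete, so a descending induction on $ht\mathscr{D}$, with the empty or rank-one diagram as trivial base, closes the proof. The main obstacle is precisely this isolated-tower case in (i), which is resolved through the delicate interplay of the boundary condition with the height induction and careful parity tracking of $i+2$.
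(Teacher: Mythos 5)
Your proof of (ii) is correct and is in substance the paper's one-line argument: the paper quotes the Remark to \ref{2.3.3} (the height function of a complete diagram cannot increase by two on passing to both the left and the right), and the parity clash you extract from the simultaneous low--high and high--low adjacencies is exactly a re-derivation of that remark with the adjoinable domino exhibited explicitly. Your main case in (i) likewise matches the paper's proof. The gap is in your residual case. Once you have reduced to a single column $C$ of height $i+2=ht \mathscr D$ standing over columns of height $\leq i$, the two coinciding rows $R_{i+1},R_{i+2}$ consist of nothing but the two top blocks of $C$; they are not rows of the subdiagram of low columns, and they are precisely the rows whose coincidence you must refute. The induction hypothesis applied to that subdiagram is therefore silent about them, and your induction does not close. (Reading the ``subdiagram'' instead as the diagram with those two rows cancelled is no better: that presupposes the very cancellation whose impossibility is the assertion.)

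Moreover this residual case cannot be argued away, because it is realizable. The order-two diagram $(0,2)$ satisfies the boundary conditions of \ref{2.2} ($C_2$ is left extremal of even height and right extremal of height $ht\mathscr D$), is not reduced since $R_1=R_2$, and is deplete: its unique domino sits on a base of height $0$, hence is even, but $C_2$ has no neighbour at levels $1,2$ on either side, so the domino is neither left nor right and cannot be removed. This is exactly the isolated-tower configuration your reduction isolates. The paper's own proof of (i) elides the same point: it asserts that the extremal column of height $i+1$ (in its indexing) ``is not the highest column'' in order to invoke the boundary conditions, and that assertion fails precisely here. So your reduction is sound and even diagnostic, but the final inductive step is a genuine gap, and statement (i) itself appears to require this degenerate configuration to be excluded, much as (ii) excludes the analogous one by hypothesis.
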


\begin {proof}  (i). Indeed suppose we have two consecutive rows $R_i,R_{i+1}$ in a depleted diagram $\mathscr D$ having the same cardinality.  This means that there is no column of height $i$ in $\mathscr D$. We can assume that the length of a row $R_{i+2}$ is strictly smaller.  This means that there is a column $C$ of $\mathscr D$ of height $i+1$.  If $C$ is not an extremal column, then a vertical domino can be removed from its top.  On the other hand if $C$ is a column to the extreme left (right) of $\mathscr D$, then since it is not the highest column it must have even (resp. odd) height and again a left and even (resp. right and odd) vertical domino can be removed from $C$.  This contradicts $\mathscr D$ being deplete.  Hence (i).

(ii).  Otherwise the height function would at some point increase by at least two on passing to both left and right. For a complete tableau this is excluded by Lemma \ref {2.3.2}  (see Remark).

\end {proof}

\subsubsection{Equivalence Classes}\label{2.3.7}

The above three operations: adjunction (and removal) of dominos (and half-dominos) and adjunction (and removal) of equal adjacent rows may be combined to give an equivalence relation on diagrams.  by Lemmas \ref {2.3.1}, \ref {2.3.6} there is exactly one deplete diagram in each equivalence class.  By contrast

\begin {lemma} Every equivalence class of diagrams admits exactly two reduced complete diagrams.  Their heights differ by $1$.  In an equivalence class of diagrams there is at most one complete diagram of height $s \in \mathbb N$ and exactly one if $s$ is sufficiently large.
\end {lemma}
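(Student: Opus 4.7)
Let $E$ denote the given equivalence class, and set $S := \{\mathrm{ht}\,\hat{\mathscr D} : \hat{\mathscr D} \in E \text{ is complete}\}$. The plan is to show that $S = \{r, r+1, r+2, \ldots\}$ for some $r$, that each $s \in S$ is realised by a unique complete diagram of $E$, and that precisely the two complete diagrams of heights $r$ and $r+1$ are reduced.

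First I would establish that $S$ is non-empty and upward-closed. For non-emptiness, applying Lemma \ref{2.3.1} inside the weak-equivalence class of any $\mathscr D \in E$ produces a complete diagram in $E$. For upward closure: given $\hat{\mathscr D} \in E$ complete of height $s$, half-domino adjunction at its strongly extremal column followed by weak-completion yields, via Lemma \ref{2.3.4}, a complete diagram in $E$ of height $s+1$. Thus $S = \{r, r+1, r+2, \ldots\}$ with $r := \min S$.

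Next comes a descent step: any complete non-reduced $\hat{\mathscr D} \in E$ of height $s$ yields a complete $\hat{\mathscr D}' \in E$ of height $s-2$ by cancelling $R_1, R_2$. By Lemma \ref{2.3.6}(ii), non-reducedness forces $R_1 = R_2$, so every column has height $0$ or $\geq 2$. Were there both a $0$-column and a column of height $\geq 2$, then picking a $0$-column with the leftmost $\geq 2$-column to its right (which always exists since $C_{t+1}$ is non-empty) would identify a right-neighbour of the $0$-column at levels $1, 2$ of height $\geq 2$, permitting the left-even domino $D_1^2$ to be adjoined and contradicting completeness. Hence every column of $\hat{\mathscr D}$ has height $\geq 2$, cancellation is valid, and lifting any hypothetical adjoinable domino from $\hat{\mathscr D}'$ back to $\hat{\mathscr D}$ shows $\hat{\mathscr D}'$ is complete. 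In particular, every complete of $E$ at height $r$ or $r+1$ must be reduced (otherwise the descent would produce a complete at $r-2$ or $r-1$, violating minimality of $r$).

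Finally I would prove by induction on $s \geq r$ that there is a unique complete in $E$ of height $s$, and that for $s \geq r + 2$ it is non-reduced. Adjoining two full rows at the bottom of a complete diagram preserves completeness and raises height by $2$, and is the inverse of the descent above, so the unique complete at $s$ (for $s \geq r+2$) is the unique complete at $s-2$ with two full rows adjoined. To exclude a hypothetical reduced complete $\hat{\mathscr D}$ at some $s \geq r+2$, apply half-domino adjunction plus completion twice to $\hat{\mathscr D}$; by Section \ref{2.3.5} the result is ``$\hat{\mathscr D}$ plus two full rows'' at height $s+2$, which by uniqueness at $s+2$ must equal the unique complete at $s$ plus two full rows, forcing $\hat{\mathscr D}$ to coincide with the unique complete at $s$ and contradicting reducedness. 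For the base cases $s = r, r+1$, I would use the $\ast$ operation of Section \ref{2.2}---which becomes involutive under $E$ and commutes with every elementary operation---to identify the reduced complete at $r+1$ as the $\ast$-image of that at $r$, and rule out any second reduced complete at height $r$ by showing that the normalisation procedure ``weakly complete, then iteratively cancel equal bottom rows'' is well-defined on $E$ modulo $\ast$, verified by checking that each elementary operation either preserves or $\ast$-flips its output. The principal obstacle is this base-case uniqueness: at heights $\geq r+2$ the descent-ascent pair is self-enforcing, but the two canonical reduced complete diagrams at $r, r+1$ must be pinned down from the equivalence class itself, requiring the most delicate use of Lemma \ref{2.3.1} across the weak-equivalence classes making up $E$.
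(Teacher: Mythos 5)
The paper states this lemma without any proof, so there is no argument of the author's to compare yours against; judged on its own terms, your proposal has a sensible architecture (minimal complete height $r$, ascent by half-domino adjunction, descent by cancelling the two bottom rows, reducedness forced exactly at heights $r$ and $r+1$), and the descent step together with the "no empty columns in a non-reduced complete diagram" observation is carried out correctly. But two load-bearing steps fail. First, the induction establishing uniqueness for $s\geq r+2$ is circular: to get uniqueness at height $s$ you need every complete diagram of height $s$ to be non-reduced (so that descent applies and lands on the unique complete diagram at $s-2$), yet your proof of non-reducedness at height $s$ invokes "uniqueness at $s+2$", which sits \emph{above} $s$ in an upward induction and has not been established; the argument never closes. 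A non-circular route is to prove uniqueness at one large height $N$ directly --- for instance by showing that the map "complete, then ascend by half-domino adjunction and completion until height $N$" is unchanged by each of the three generating moves --- and then push uniqueness downward using your descent step.

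Second, and more seriously, the base case at heights $r,r+1$ --- which is the real content of the lemma, since Lemma \ref{2.3.1} only gives uniqueness of the complete diagram inside a \emph{weak} equivalence class (domino moves alone), whereas the full class also glues many weak classes together via half-dominoes and row cancellation --- rests on the $*$ operation preserving the class $E$. It does not: by the paper's own example in \ref{2.3.8}, for $t=3$ the diagram $(0,1,1,1)$ and its dual, with deplete representative $(0,0,1,1)$, lie in \emph{different} equivalence classes; both are reduced and complete of height $1$, so if they were equivalent the lemma itself would be false. The two reduced complete diagrams of $E$ are related by half-domino adjunction followed by completion (\ref{2.3.4}, \ref{2.3.5}), not by $*$. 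With $*$ replaced by that operation, your closing sentence --- "verified by checking that each elementary operation either preserves or flips its output" --- is the right strategy, but that verification (in particular for cancellation of two equal adjacent rows occurring in the \emph{middle} of a diagram, and for half-domino adjunction at the strongly extremal column) is precisely the heart of the proof, and it is left entirely undone.
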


 \subsubsection{Duality}\label{2.3.8}

 After elimination of adjacent rows the map $\mathscr D \rightarrow \mathscr D^*$ becomes involutive.  We call it duality.  Duality increases or decreases height by one.  However due to equivalence there can be self-dual diagrams.

 We may represent a diagram by the unordered partition it defines.  However equivalence classes and duality are more easily pictured through the diagram itself.  The following are simple examples. For $t=2$ consider the deplete diagram $(0,1,1)$.  Its dual is $(2,2,1)$.  The strongly extremal column of $(0,1,1)$ is $C_2$.  Adjunction of a half domino gives $(0,2,1)$ which becomes $(2,2,1)$.  On the other hand for $t=3$ the deplete diagram $(0,1,1,1)$ has dual $(0,0,1,1)$ as its deplete representative.

 \subsection{}\label{2.4}

 The set of equivalence classes of diagrams of order $t+1$ is denoted by $H^{t+1}$, or simply by $H$.  For any representative $\mathscr D \in H$ the $x$ co-ordinate of a strongly extremal column (see \ref {2.3.4}) is independent of the choice of representative.  Then for any $j \in \hat{T}$ we let $H_j$ denote the subset of $\mathscr D \in H$ for which the strongly extremal column has $x$ co-ordinate $j$.

 \section{Tableaux}\label{3}

 \subsection{Numbering}\label{3.1}

 A tableau $\mathscr T$ is a diagram (satisfying the boundary conditions) in which the blocks have  entries given by the following rules.  Here it is always assumed that a block belongs to some column.  More precisely a block is said exist at the $(i,j)$ co-ordinate when  $ht C_i \geq j$. In this case it will be denoted by $B(i,j)$. The entry in $B(i,j)$ is set equal to $b(i,j)$.

 In particular the leftmost (resp. rightmost) block at level $j$ is $B(i,j)$, where $i$ is minimal (resp. maximal) such that $ht C_i \geq j$.  It is called \textit{the} extremal block at level $j$ if $j$ is even (resp. odd).  In particular for a non-empty diagram, $B(t+1,1)$ is the extremal block at level $1$.   Set $I_0=T$ and let $I_j$ be the subset $\hat{T}$ such that $B(i,j)$ is a block and not extremal.

 \

\textbf{ N.B.}  The extremal blocks lie in the extremal columns.  However they do not lie in the unique strongly extremal column unless $j = ht \mathscr T$ \textit{and} there is just one block in $R_j$.

 \

 (i)  A diagonal slash is inserted into \textit{the} extremal block of every non-empty row.

 \

 (ii) $i$ is inserted into $B(i,1):i \leq t$, that is $b(i,1)=i: i \in T$.

 \

 (iii) Insertions in subsequent rows are defined inductively as follows.  Suppose $j$ is odd (resp. even) and that insertions in $R_j$ have been determined.  Assume $ht C_i \geq j+1$, let $C_k$ be the left (resp. right) neighbour to $C_i$ at level $j$.   Assume that $B(i,j+1)$ is not an extremal block.  One checks from the boundary conditions that $B(k,j)$ is a block and not extremal.  Then we set $b(i,j+1)=b(k,j)$.

 \

 More formally  for each $j \in \mathbb N^+$ we have an injective map $\varphi_j:I_{j+1}\rightarrow I_j$ defined by setting $\varphi_0=\Id$ and $b(i,j+1)=b(\varphi_j(i),j)$ for $j>0$.  Set $\theta_j=\varphi_0\varphi_1\ldots \varphi_j$. This gives an injective map
$\theta_j:I_{j+1}\rightarrow T$ such that $b(i,j+1)=\theta_j(i)$.

 \

 It is immediate from the above construction that entries strictly increase in rows. This is why we do not insert $0$ into the extremal blocks, though we do this in \ref {5.4}.

 \

Since these rules uniquely determine the entries in the blocks of a diagram we may identify diagrams and tableaux.  Thus for example a tableau of a complete diagram will be called a complete tableau.

\

 These rules applied to the dual diagram and extends duality to tableaux.

 \

 A tableau $\mathscr T$ is said to be well-numbered if, for a non-extremal block, one has

 $$b(i,j)= \left\{\begin{array}{ll}i& : j \ \text{ odd}, \\
   i-1&:j \ \text{even}.\\

\end{array}\right.$$

We extend this rule for $j=0$.  Note that this rule is compatible with adding two complete rows at the bottom of a diagram.
 %$B_{i,j}$ equals $i$ for $j$ odd and $i-1$ for $j$ even.

 From the height function of a complete diagram $\mathscr D$ one easily checks (see Remark \ref {2.3.3}) the

 \begin {lemma} A complete tableau is well-numbered.
 \end {lemma}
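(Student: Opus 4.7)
The plan is to proceed by induction on the row index $j$, propagating the well-numbered formula $b(i,j)=i$ for $j$ odd and $b(i,j)=i-1$ for $j$ even through the recursion $b(i,j+1)=b(\varphi_j(i),j)$ from Section 3.1. The base case $j=1$ is handed to us by rule (ii): every non-extremal block of $R_1$ satisfies $b(i,1)=i$, and the unique extremal block sits at $i=t+1$ since $C_{t+1}$ is non-empty by the right boundary condition, matching the odd-row formula exactly.

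For the inductive step, fix a non-extremal block $B(i,j+1)$. The heart of the argument is to identify $\varphi_j(i)$: I claim $\varphi_j(i)=i-1$ when $j$ is odd and $\varphi_j(i)=i+1$ when $j$ is even. Assume first $j$ is odd, so $j+1$ is even and $ht C_i\geq j+1$ is even. Lemma \ref{2.3.3}(ii), applied with $i$ in the role of ``$i+1$'', gives $ht C_{i-1}\geq ht C_i-1\geq j$, so $C_{i-1}$ already has height at least $j$ and must therefore be the left neighbour of $C_i$ at level $j$. Assume next $j$ is even, so $j+1$ is odd and $ht C_i\geq j+1$ is odd. If $ht C_{i+1}<j$, then $ht C_i-ht C_{i+1}\geq 2$; combined with Lemma \ref{2.3.3}(i) this forces $ht C_i=j+1$ and $ht C_{i+1}=j-1$, contradicting the reformulation of Lemma \ref{2.3.3}(iii) stated in the Remark (an odd value of the height function cannot drop by $2$ on passing to the right). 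Hence $ht C_{i+1}\geq j$ and $\varphi_j(i)=i+1$.

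The final check is that $B(\varphi_j(i),j)$ is itself non-extremal, so the inductive hypothesis applies. For $j$ odd the extremal block of $R_j$ is the rightmost, and $B(i-1,j)$ is not rightmost because $C_i$ has height $\geq j$ and lies strictly to its right; for $j$ even the extremal block is the leftmost, and $B(i+1,j)$ is not leftmost because $C_i$ lies strictly to its left with height $\geq j$. Feeding this back into the recursion gives $b(i,j+1)=b(i-1,j)=i-1$ when $j$ is odd and $b(i,j+1)=b(i+1,j)=(i+1)-1=i$ when $j$ is even, which is exactly the well-numbered formula at row $j+1$.

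I expect the main obstacle to be the parity case where $j$ is even and $ht C_i$ is odd, since \emph{a priori} a complete diagram might seem to allow a short column squeezed between two taller columns and the recursion would then read off the wrong entry. Completeness, encoded in the full strength of Lemma \ref{2.3.3} and its Remark, is precisely what rules this out; once the neighbour identification $\varphi_j(i)=i\mp 1$ is in place the rest of the argument is bookkeeping on the recursion.
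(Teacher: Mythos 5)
Your proof is correct and is essentially a fleshed-out version of the argument the paper leaves implicit: the paper merely asserts the lemma is "easily checked" from the height-function constraints of Lemma \ref{2.3.3}, which is exactly the tool you deploy to pin down $\varphi_j(i)=i\mp 1$. One small repair in the odd-$j$ case: Lemma \ref{2.3.3}(ii) requires $ht\, C_i$ itself to be even, which holds only when $ht\, C_i=j+1$ exactly; when $ht\, C_i\geq j+2$ you should instead invoke part (i), which still gives $ht\, C_{i-1}\geq ht\, C_i-2\geq j$. With that adjustment the neighbour identification, the non-extremality check, and the bookkeeping on the recursion are all sound.
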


 \subsection{Ordering}\label{3.2}

 \subsubsection{Partial Order}\label{3.2.1}

 To a tableau $\mathscr T$ we may assign a partial order $P(\mathscr T)$ as follows.

 Assume that $j$ is even.  With $s$ determined as below, fix a column $C_{i_s}$ with a left neighbour $C_{i_1}$ at level $j$.
 %Let $c_{i_s}^j$ be the entry in the box of $C_{i_s}$ at level $j$.
 By our construction  $b(i_s,j)=b(i_{s-1},j-1)$ where $C_{i_{s-1}}$ is the left neighbour of $C_{i_s}$ at level ${j-1}$.  One has $i_1\leq i_{s-1} <i_s$.  Let $C_{i_2}, C_{i_3}, \ldots, C_{i_{s-1}}$ be the columns of height $j-1$ lying between $C_{i_1},C_{i_{s}}$ (thus determining $s$) and set %$c_{i_1}^{j-1},c_{i_2}^{j-1},\ldots,c_{i_{s-1}}^{j-1}$ be their entries in the $(j-1)^{th}$ level.  Then  Lemma \ref {6.4.24}$(i)$ translates to give the inequalities
$$b(i_v,j-1) > b(i_s,j), \forall v=1,2,\ldots,s-2. \eqno {(*)}$$

\

 Assume $j$ odd. Fix a column $C_{i_1}$ with a right neighbour $C_{i_s}$ at level $j$.  By our construction  $b(i_1,j)=b(i_2,j-1)$ with  $C_{i_{2}}$ right neighbour of $C_{i_1}$ at level ${j-1}$.  One has $i_1 <i_{2} \leq i_s$.  Let $C_{i_2}, C_{i_3}, \ldots, C_{i_{s-1}}$ be the columns of height $j-1$ lying between  $C_{i_1},C_{i_{s}}$ (thus determining $s$) and set

$$b(i_v,j-1) > b(i_1,j), \forall v=3,\ldots,s. \eqno {(**)}$$

We define the relations in  $P(\mathscr T)$ to be the above with $j \in \{1,2,\ldots, ht \mathscr T\}$

Since a diagram gives rise to exactly one tableau, the above relations can also be associated to the corresponding diagram.

If two consecutive rows are adjoined to a diagram, then the relations above are obviously unchanged.  However relations \textit{are} changed by adjunction of vertical dominoes.  On the other hand since the behaviour of the height function is very similar for all completed diagrams in an equivalence class, the relations for completed diagrams in an equivalence class coincide and as we shall see take a particularly simple form.   We shall eventually show (Lemma \ref {5.3}) that for an arbitrary diagram in an equivalence class the relations include all the relations for any complete diagram in that class.

\

These relations are compatible with duality.  More precisely the relations coming from $\mathscr T^*$ are obtain from those obtained from $\mathscr T$ by the substitution $i \mapsto t+1-i: i \in T$.

\subsubsection{Linear Order}\label{3.2.2}

It is rather easy to show that the graph of $P(\mathscr T)$ has no cycles and so can be lifted to a linear order $L(\mathscr T)$ and indeed to several.

Fix a tableau $\mathscr T$ and set $T^0= \hat{T}$ and $T^j:=\{b_{i,j}\}_{i \in \hat{T}}: j \in \mathbb N^+$. By the construction of \ref {3.1}, the $T^j$ are decreasing in $j$.   For all $j'<j \in \mathbb N$, set $\overline{T}^{j'}=T^{j'}\setminus T^j$.

%Now let for $j$ even (resp. $j$ odd) let $N^{j-1}$ denote the subset of $\overline{T}_{j-1}$ given in the left hand side of \ref {3.2.1}$(*)$ (resp. $(**)$).

The relations given in \ref {3.2.1}$(*),(**)$) are \textit{amongst} those of the form $b<b': b\in T^j, b' \in \overline {T}^{j-1}$.  For the graph of $P(\mathscr T)$ to admit a cycle we would need $T^j\cap \overline{T}^{j'}\neq \phi$ for some $j'<j$. We conclude that there are no cycles in $P(\mathscr T)$ and so it can be lifted to a linear order.

For a complete tableau we can do much better.  We say that a relation of the above form is superfluous if $j \in \mathbb N^+$ is not maximal with the property that $b \in T^j$.
%Since the $\overline{T}^{j-1}$ are pairwise disjoint an element of $T$ can occur only once as the smaller (resp. larger) element and in particular only twice in the set of inequalities.

\begin {lemma}  For a well-numbered tableau $\mathscr T$ there are no superfluous relations.
\end {lemma}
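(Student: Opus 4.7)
The plan is to examine each relation of type $(*)$ or $(**)$ from \ref{3.2.1} in isolation and verify that its smaller side $b \in T^j$ does not lie in $T^{j+1}$. Since the sequence $T^{j'}$ is decreasing in $j'$ as observed in \ref{3.2.2}, $b \notin T^{j+1}$ is equivalent to $j$ being maximal with $b \in T^j$, i.e.\ to the relation being non-superfluous; this reduces the problem to checking two direct computations, one for each family.

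I would first treat a relation of type $(*)$ with $j$ even. The smaller side is $b(i_s,j)$, defined via the construction of \ref{3.1} by $b(i_s,j) = b(i_{s-1},j-1)$; this formula presupposes that $B(i_s,j)$ is non-extremal, so by well-numberedness $b(i_s,j) = i_s - 1$. For the value $i_s - 1$ to reappear in $T^{j+1}$ (with $j+1$ odd), well-numberedness on row $R_{j+1}$ would demand a non-extremal block $B(i_s - 1, j+1)$ with entry $i_s - 1$, hence $ht\, C_{i_s-1} \geq j+1$. But $(*)$ produces a nontrivial relation only when $s \geq 3$, so at least one column of height $j-1$ lies strictly between $C_{i_1}$ and $C_{i_s}$; in particular $i_1 < i_s - 1 < i_s$. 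Since $C_{i_1}$ is the left neighbour of $C_{i_s}$ at level $j$, every column strictly between them has height $\leq j-1$, contradicting $ht\, C_{i_s - 1} \geq j+1$.

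The relations of type $(**)$ are handled either by the analogous column-pinching argument (now $j$ is odd, well-numberedness gives $b(i_1,j) = i_1$, and the column that would have to reach height $\geq j+1$ is $C_{i_1+1}$, which once again sits strictly between $C_{i_1}$ and $C_{i_s}$ because $s\geq 3$ forces $i_1+1 < i_s$), or simply by invoking the duality $i \mapsto t+2-i$ noted in \ref{3.2.1} together with the obvious fact that well-numberedness is preserved by the $*$ operation. I anticipate that the only subtle point is the preliminary verification that the right-hand side $B(i_s,j)$ in $(*)$ (respectively $B(i_1,j)$ in $(**)$) is genuinely non-extremal; otherwise its entry would be a diagonal slash rather than a number and the well-numbered formulas $i_s - 1$ and $i_1$ would not apply directly. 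Once that preliminary is secured, the entire proof collapses to the geometric observation that a column strictly between two neighbours at level $j$ has height $\leq j-1$, so no induction either on $j$ or on the tableau is required.
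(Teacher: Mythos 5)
Your proof is correct and is essentially the paper's argument read in the contrapositive direction: both use well-numberedness to pin down the unique column in which the smaller entry of a relation could occur in an adjacent row, and then the fact that a column strictly between two neighbours at level $j$ has height $\leq j-1$ to rule that occurrence out. The paper fixes the value and its maximal row $j$ and shows that at any lower row the neighbour columns are forced to be adjacent (so $s=2$ and no inequality arises), whereas you fix a nontrivial relation (so $s\geq 3$) and show the value cannot persist to row $j+1$ --- the same ingredients in the opposite order.
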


\begin {proof}  Fix $i$ appearing in $\mathscr T$ and let $j$ be maximal such that $i$ appears in the row $j$ of $\mathscr T$.  Then $i$ appears in row $j_-$  of $\mathscr T$, for all $j_- \leq s$.  We must show that only from the row $j$ can we obtain an inequality of the form $i < k$ by the procedure of \ref {3.2.1}.

Suppose $j$ is even.

Since $\mathscr T$ is well-numbered, $i$ appears in $C_{i+1}$ at row $j$ and in $C_i$ at row $j-1$.  In particular $C_i$ has height $\geq j-1$.

Suppose $j_-$ is even.  Since $\mathscr T$ is well-numbered, $i$ appears in $C_{i+1}$ at row $j_-$ and in $C_i$ at row $(j_--1)$.  Yet $C_i$ has height $j-1>j_-$, so \ref {3.2.1}$(*)$ yields no inequalities with $i=i_s, j=j_-$.

Suppose that $j_-$ is odd. Since $\mathscr T$ is well-numbered, $i$ appears in $C_i$ at row $j_-$ and in $C_{i+1}$ at row $(j_--1)$. Yet $C_{i+1}$ has height $\geq j>j_-$, so \ref {3.2.1}$(**)$ yields no inequalities with $i_1=i,j=j_-$.

The case when $j$ is odd obtains from duality.

\end {proof}

 \textbf{Remark}.  Suppose that $\mathscr T$ is well-numbered, for example complete. Then since the $\overline{T}^{j-1}$ are pairwise disjoint an element of $T$ can occur only once as the smaller (resp. larger) element and in particular only twice in the set of inequalities.

 \subsubsection{Example.}\label{3.2.3}

 The deplete diagram $(2,1,0,2)$ gives gives to the relations $2<1,2<3$ which exhibits a superfluous inequality.  It completion is $(2,1,2,2)$ which rise to just $2<1$.  It is very rare that a complete tableau gives a linear order. Indeed the above result implies that this can only happen if it has height $\geq t-1$, whilst a complete diagram of minimal height in its equivalence class must have height $\leq t$.  If  $t$ is even, then the diagram $(t,t-2,\ldots,2,1,3,\ldots,t-1,t)$ yields the linear order $t < 1 < t-1 < 2 < t-2 < \ldots < t/2$, whilst if $t$ is odd the diagram $(t,t-1,t-3,\ldots,2,1,3,\ldots,t)$ yields the linear order $1 < t < 2 < t-1< \ldots < (t+1)/2$.

  \subsection{Single Block Linkage}\label{3.3}

  Fix $j,k \in \hat{T}$ distinct.  Single block linkage is a map $H_j \rightarrow H_k$ obtained by adjoining a single block to the top of the column $C_k$ in a complete reduced diagram $\mathscr D \in H_j$ to give a diagram $\mathscr D \in H_k$.  This construction is made precise below.

  Recall that we identify diagrams and tableaux.

  \subsubsection{Quasi-extremal Columns}\label{3.3.1}
  
  Quasi-extremal columns are only defined for complete tableaux.

  Let $\mathscr T \in H_j$ be a complete  tableau (or diagram)  of height $s$.  A left (resp. right) quasi-extremal column $C$ of $\mathscr T$ is a column which is not strongly extremal such that every column to the left (resp. right) of $C$ has height at most that of $C$.  By Lemma \ref {2.3.2} the height of $C$ must be at least $s-1$.

Suppose that $s$ is odd (resp. even). Then $C_{t+1}$ (resp. $C_1$) has height $s$. Moreover $C_j$ has height $s$ and by definition (\ref {2.3.4}) is the leftmost (resp. rightmost) column of $\mathscr T$ of height $s$.

Suppose $C$ has height $s-1$.  Then it can only be a leftmost (rightmost) quasi-extremal column of $T$ and must lie to the left (resp. right) of $C_j$.

Suppose $C$ has height $s$. Then it must lie to the right (resp. left) of $C_j$.

Finally one notes that $C_j$ is a quasi-extremal column of $\mathscr T$, if and only if $C_{t+2-j}$ is a quasi-extremal column of $\mathscr T^*$.

  \subsubsection{}\label{3.3.2}

 \begin {lemma} Let $\mathscr T'\in H_j$ be a complete reduced tableau of height $s'$. A tableau $\mathscr T \in H_k:k \neq j$ may be obtained by adding a single block to the top of some column $C'_k$ of $\mathscr T'$ if and only if $C'_k$ is quasi-extremal of height $s'-1$.
 \end {lemma}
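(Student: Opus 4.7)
The plan is to prove both implications by using the $*$-duality to reduce to the case where $s'$ is odd, so that $C_j$ is the leftmost column of height $s'$ in $\mathscr T'$; the case of $s'$ even then follows formally. Writing $h := ht C'_k$, so that the column $C_k$ in the new diagram $\mathscr T$ has height $h+1$, the strategy is to pin down $h$ using the requirement that $C_k$ be strongly extremal in $\mathscr T$ together with the requirement that the boundary conditions survive the adjunction.

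For sufficiency, assume $C'_k$ is quasi-extremal of height $s'-1$. By \ref{3.3.1} the only possibility is that $C'_k$ is left-quasi-extremal with $k<j$, and that every column to its left has height $\leq s'-1$. Then $\mathscr T$ still has maximum height $s'$, and $C_k$ is the leftmost column of that height, hence strongly extremal, giving $\mathscr T \in H_k$. For the boundary conditions on $\mathscr T$, the only column whose height changed is $C_k$, and its new height $s'$ equals the maximum, so no boundary constraint on $C_k$ is at risk. The column $C_j$ ceases to be left-extremal at level $s'$ (since $C_k$ now sits to its left at the same height), but it too retains maximum height. All other columns retain their extremal status at their own heights from $\mathscr T'$, since $C_k$'s height remained $\geq s'-1$ and so the neighbour relations at levels $\leq s'-1$ are unaffected.

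For necessity, suppose the adjunction yields $\mathscr T \in H_k$. Since $C_k$ is strongly extremal in $\mathscr T$, it must attain $ht \mathscr T \geq s'$. If $h+1<s'$ then $C_k$ fails to reach this maximum, a contradiction. If $h=s'$, then $k>j$ because $C_j$ is the leftmost column of height $s'$, and $\mathscr T$ has maximum height $s'+1$ carried uniquely by $C_k$; however $C_j$, whose odd height $s'$ is no longer the maximum, remains left-extremal at level $s'$ (since $C_k$ lies to its right) and thus violates the left boundary condition---another contradiction. Hence $h=s'-1$, and $C_k$ in $\mathscr T$ has height $s'$. Being strongly extremal in $\mathscr T$ then forces $k<j$, and every column strictly to the left of $C_k$ in $\mathscr T$---equivalently in $\mathscr T'$, as those columns were unchanged---has height $<s'$, that is $\leq ht C'_k$. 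This is precisely left-quasi-extremality of $C'_k$ of height $s'-1$, which by \ref{3.3.1} is the only form quasi-extremality of that height can take when $s'$ is odd.

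The subtlest step is eliminating the case $h=s'$ in the necessity direction, which hinges on the parity observation that raising a maximum-height column to height $s'+1$ leaves $C_j$ exposed as a non-conforming left-extremal column. Once this is done, the remainder reduces to careful but routine bookkeeping with the definitions of strongly extremal and quasi-extremal columns, together with the consequences of Lemmas \ref{2.3.2} and \ref{2.3.3} for the height function of the complete reduced tableau $\mathscr T'$.
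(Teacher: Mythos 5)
Your proof is correct and follows essentially the same route as the paper's: both arguments first pin down $ht\, C'_k=s'-1$ from the boundary conditions (you re-derive the obstruction of \ref{2.3.4} --- that a block cannot be placed atop a non-strongly-extremal column of maximal height, exposing a forbidden extremal column of odd non-maximal height --- where the paper simply cites it) and then translate strong extremality of $C_k$ in $\mathscr T$ into quasi-extremality of $C'_k$ in $\mathscr T'$ via the definitions in \ref{2.3.4} and \ref{3.3.1}. The only cosmetic caveat is that your duality reduction to $s'$ odd may produce a complete tableau that is no longer reduced, but since reducedness is never actually invoked in the odd-height argument this is harmless.
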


 \begin {proof}

 Let $C_k$ be the column in $\mathscr T$ obtained by adding a block to $C'_k$.  Since $C'_k$ is not strongly extremal, whilst $C_k$ is required to be strongly extremal, it follows that $ht C'_k=s'-1$ by \ref {2.3.4}.

 Suppose that $s'$ is odd (resp. even) in what follows below.

  Then by definition (\ref {2.3.4}) it follows that $C_k$ has no neighbour to the left (resp. right) at level $s'$. It follows by definition (\ref {3.3.1}) that $C'_k$ is left (resp. right) quasi-extremal.

  Conversely if $C'_k$ is quasi-extremal of height $s'-1$ it follows by definitions (\ref {3.3.1},\ref {2.3.4}) that $C_k$ is strongly extremal.

  \end {proof}

  \textbf{Remark}.  It is also possible to add a block to a quasi-extremal column column $C_k'$ of height $s'$ if we \textit{add at the same time} a block to the strongly extremal column $C'_j$.  However the resulting tableau when completed is the same as if we add a block to $C'_j$ complete the tableau, proceed as in the lemma above and finally complete the resulting tableau.  In other words because there are two complete reduced tableaux in each equivalence class, this second possibility does not have to be considered.

  \subsubsection{The Graph of Links}\label{3.3.3}

  Recall the construction of \ref {3.3.2}.  The block added to $C_k'$ to obtain $\mathscr T$ cannot be extremal and so contains an element $i \in T$ determined by the rules in \ref {3.1}.

  Thus single block linkage gives a graph $\mathscr G_H$.  We have a map $v\mapsto i_v$ from the set $V(\mathscr G_H)$ of vertices of $\mathscr G_H$ to $\hat{T}$ and a map $(v,v') \mapsto i_{(v,v')}$ from the set of edges $E(\mathscr G_H)$ of $\mathscr G_H$ to $T$ given as follows.

   If $\mathscr T' \in H_j$, then the corresponding vertex $v'$ is labelled by $j$, that is $i_{v'}=j$.

     Given a single block linkage from $\mathscr T' \in H_j$ to $\mathscr T \in H_k$ the edge $(v,v')$ joining $v'$ to $v$ is labelled by the entry $i$ of the added block that is $i_{(v,v')}=i$.

     Observe that $i_{(v,v')}$ is determined  by the pair $\mathscr T,\mathscr T'$.  Consequently there can be at most one edge with a given label emanating from a given vertex. Again $j,k$ are distinct, so that the labels on vertices joined by an edge must be distinct.  Further rules will be described in Section \ref {6}.

  \section{Functions}\label{4}

\subsection{}\label{4.1}

In this section we use the formalism but not the results of the Sections \ref {2}, \ref {3} to define for each tableau $\mathscr T$ a function $f_\mathscr T$ which is separately linear in two sets of variables respectively labelled by $T,\hat{T}$.  We shall show that $f_\mathscr T$ is independent of the choice of $\mathscr T$ in its equivalence class and that $f_\mathscr T \neq f_{\mathscr T'}$ for distinct deplete tableaux.

\subsection{Further Notation and Motivation}\label{4.2}

Let $\textbf{c}:=\{c_i\}_{i\in T}$ be indeterminates eventually replaced by \textit{non-negative} integers coming from entries in the Cartan matrix.  Then inequalities between the resulting coefficients taken in general position (that is when $c_i\neq c_j,$, for all $i \neq j$) define a linear order $L(T)$  on $T$.  Conversely a linear order $L(T)$ on $T$ defines a \textbf{sector} $\mathscr S_{L(T)}:=\{\textbf{c}|i<j \Rightarrow c_i < c_j\}$.  This decompose $\mathbb N^t$ into $t!$ sectors which meet at boundaries.  To economise on notation we use $\textbf{c}$ to denote the sector defined by a linear order on $T$.

Though this will not concern us in the present paper we remark that for a fixed simple root $\alpha$, the $-c_i:i \in T$ are given by sums of entries  of the Cartan matrix coefficients $\alpha^\vee(\beta): \beta \in \pi$ with non-negative integer coefficients.

Let $\{m^i\}_{i \in \hat{T}}$ be indeterminates.   They represent $t+1$ consecutive places in $B_J$ corresponding to $\alpha$. Let $\{r^i\}_{i \in \hat{T}}$ be the Kashiwara functions corresponding to $\alpha$ and these places.  For a given $b \in B_J$ they become non-negative integers.  One may write $r^i-r^{i+1}=m^i+m^{i+1}+M_i$ for all $i \in T$.  Here the $M_i$ are integers obtained from the entries of $B_J$ corresponding to the remaining simple roots $\beta \in \pi \setminus \{\alpha\}$.

The signs of the differences  $r^i-r^{i+1}: i \in T$ determine the manner in which the Kashiwara operators $e_\alpha,f_\alpha$ act on $B_J$.  Thus $M_i$ being possibly negative balances off the positivity of the sum $m^i+m^{i+1}$ and gives the structure of $B_J(\infty)$ its extraordinary subtlety.  However this  will only be implicit in the present work and so here $M_i$ is set equal to zero.

The driving function $h$ is given by the previous induction step (involving a different simple root) and takes the form $h=-\sum_{i\in T}c_im^i$.  It will be represented by the empty tableau $\mathscr T_h$.   The unique strongly extremal column (see \ref {2.3.4}) of $\mathscr T_h$ is $C_{t+1}$.  This corresponds to the fact that the coefficient of $m^{t+1}$ in $h$ is zero.
Thus $h$ does not depend on $m^{t+1}$ and so will not change if $e_\alpha$ (resp. $f_\alpha$) alters the value of $m^{t+1}$, that is to say ``enters at the $(t+1)^{th}$ place".
%which holds $r^{t+1}\geq \Max _{i \in \hat{T}}r^i$ (resp. $r^{t+1}> \Max_{i \in \hat{T}}r^i$).

The dual Kashiwara functions taken with respect to the fixed $\alpha$ need to exhibit a similar invariance property but must allow the entry of the Kashiwara operators at any place.

 Let $\hat{H}^{t+1}$ (or simply, $\hat{H}$) be the $\mathbb Z$ module set of ``bilinear'' functions $\sum_{i\in \hat{T}}d_im^i$ with $d_i \in \sum_{j \in T}\mathbb Zc_j$. To obtain the required ``invariance'' we construct a subset $H \subset \hat{H}$ with following properties.

\

 Set $H_j:=\{f \in H|d_j=0\}$. Then

 \

 (i)  $H=\cup_{j \in \hat{T}} H_j$.

 \

(Eventually $H,H_j$ will be identified with the objects defined in \ref {2.4}.)

\

  For each sector $\textbf{c}$ we define in \ref {5.3} a subset $H(\textbf {c})$.  It has the property that $H(\textbf {c})\subset\{f \in H| f = h+\sum_{i \in T}\mathbb N(r^i-r^{i+1})\}$. We say that $f\in H$ satisfies the \textit{positivity condition} with respect to a sector $\textbf{c}$ if $f \in H(\textbf{c})$.

  \

 (ii) $H=\cup H(\textbf{c})$, the union being over all sectors.

 \

  Set $H_j(\textbf{c})=H_j\cap H(\textbf{c})$.

\

(iii)   $\Max_{f \in H(\textbf{c})}f$ is unchanged when $e_\alpha$ (resp. $f_\alpha$) enters the $j^{th}$ place.

\

%To ensure this property we need in particular to be an $f\in H_j(\textbf{c})$ which is maximal in $H(\textbf{c})$ under the above conditions on the Kashiwara functions.

%\

 Here we shall only need to know that (iii) can be ensured by a condition, which we call condition $S$ on $H(\textbf{c})$.  We construct $H(\textbf{c})$ with this in mind using the formalism of the previous sections and then the verification of condition $S$ will constitute the proof of the Preparation Theorem (Theorem \ref {8.6}).  Nevertheless in \ref {6.7} we shall discuss briefly how condition $S$ ensures this result.

 \

 Though this will not concern the present work, let us briefly discuss the role of tableaux in the Preparation Theorem.  The unique tableau of height $0$ corresponds to the driving function $h$. The tableaux of height $1$, which correspond to adding successive differences of Kashiwara functions to $h$, are required for invariance under $e_\alpha$ action.  Subsequently the tableaux of height $2$, which correspond to subtracting successive differences of Kashiwara functions from the functions obtained from the tableaux of height $1$ are required for invariance under $f_\alpha$ action.  The positivity condition implies that the overall sum is still a sum with positive coefficients of successive differences of Kashiwara functions, but now this can only be true for certain sectors.  A further sweep of $e_\alpha$ insertion gives the tableaux of height $3$ and so on. (I did not originally anticipate the need for diagrams of height $>2$, but an example of P. Lamprou showed these to be unnecessary.  The first example is $(3,2,1,3)$.)

 The essence of the Preparation Theorem is that (iii) holds.  In this it is a remarkable fact that we need to impose positivity conditions which are stronger than those which the above condition might seem to imply.  They result from a duality condition whose necessity is also not immediately obvious.  All that we can say is that this procedure works! In terms of the $S$-graphs introduced in Section \ref {7} these extra conditions are automatically included. However it is only in terms of tableaux that one sees (see Section \ref {5.5}) that when duality is incorporated these positivity properties are the minimal possible in each sector. For a simple Lie algebra of type $A$ one need never go beyond tableaux of height $1$.  However tableaux of height $>1$ are needed, probably for the first time, in type $D_5$.
 
 Another importance of the tableaux is that $S$-graphs are not uniquely determined; but the $S$-graphs are canonically determined as subgraphs of the graph of links between the tableaux \cite {JL2} and it is these canonically determined $S$-graphs which are to be used in constructing the dual Kashiwara functions.

 %Because the linear order on $\textbf{c}$ and the desired order on $ H_j(\textbf{c})$ have to be balanced off, it can anticipated that the verification of condition $S$ will not be easy.  Indeed the most difficult part of the present work was to guess a suitable choice of $H$.  We shall not encumber the present paper with any clues as to how this was achieved.

\subsection{Duality}\label{4.3}

Set $c_i^*=c_{t+1-i}: i \in T, (m^i)^*=-m^{t+2-i}:i \in \hat{T}$.  Extend $*$ to products as an automorphism.  Extend $*$ by $\mathbb Z$ linearity to the $\mathbb Z$ module $\hat{H}$.  It is clear that $*$ is an involution of $\hat{H}$.  It is called duality.  Since $r^i-r^{i+1}=m^i+m^{i+1}$, for all $i \in T$ we may further set $(r^i)^*=r^{t+2-i}:i \in \hat{T}$.  Recall that $h=-\sum_{i=1}^tc_im^i$. One checks that $h^*=h+\sum_{i=1}^tc_i(r^i-r^{i+1})$.

\subsection{Reading the Tableau}\label{4.4}

Recall \ref {3.1} and let $\mathscr T$ be a tableau.  Fix a row $R_s$ of $\mathscr T$ and set $|R_s|=k_s$.  Let $\{C_{u_i}\}_{i=1}^{k_s}$ be the set of columns of $\mathscr T$ of height $\geq s$.  Suppose $s$ is odd (resp. even) and recall that every entry except the last (resp. first) row has an entry $b(u_i,s) \in T$. Set
$$f_{R_s}= \sum_{i=1}^{k_s-1}c_{b(u_i,s)}(r^{u_i}-r^{u_{i+1}}), \quad (\text{resp}. \  f_{R_s}= \sum_{i=2}^{k_s}c_{b(u_i,s)}(r^{u_i}-r^{u_{i-1}})).\eqno{(*)}$$

Then
$$f_\mathscr T:= h +\sum_{s=1}^{ht \mathscr T}f_{R_s}. \eqno {(**)}$$

%Set $c_i^*=c_{t+1-i}: i \in T, (r^i)^*=r^{t+2-i}:i \in \hat{T}$.  Extend $*$ to products as an automorphism.  Finally Set $h^*=h+\sum_{i=1}^tc_i(r^i-r^{i+1})$.  Extend $*$ by linearity to all sums of the form $h+ \sum_{i \in T}\sum_{j,k \in \hat{T}}c_i(r^j-r^k)$.

\begin {lemma} For every tableau $\mathscr T$ one has $f_{\mathscr T^*}=f^*_\mathscr T$.
\end {lemma}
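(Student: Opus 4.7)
The plan is to decompose $f_{\mathscr T^*}=h+\sum_{s=1}^{ht\mathscr T+1}f_{R_s(\mathscr T^*)}$ and pair its rows with those of $\mathscr T$. Because the $*$ operation augments every column by a new bottom block, row $R_1$ of $\mathscr T^*$ contains all $t+1$ columns with $B(t+1,1)$ extremal and $b^*(i,1)=i$ for $i\in T$ by rule (ii) of \ref{3.1}. Inserting into \ref{4.4}$(*)$ with $s=1$ odd yields $f_{R_1(\mathscr T^*)}=\sum_{i=1}^{t}c_i(r^i-r^{i+1})$, which is exactly $h^*-h$ by the identity recorded in \ref{4.3}. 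Thus $h+f_{R_1(\mathscr T^*)}=h^*$, so the proposition reduces to the row-by-row claim
$$f_{R_{s+1}(\mathscr T^*)}=(f_{R_s(\mathscr T)})^* \qquad\text{for every } s=1,\ldots,ht\mathscr T.$$

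The proof of this claim rests on a bijection between row $R_s$ of $\mathscr T$ and row $R_{s+1}$ of $\mathscr T^*$. By the definition of $*$, the column $C_i$ has height $\geq s+1$ in $\mathscr T^*$ iff $C_{t+2-i}$ has height $\geq s$ in $\mathscr T$. Writing $u_1<\cdots<u_k$ for the column positions of row $R_s$ of $\mathscr T$ and $u'_1<\cdots<u'_k$ for those of row $R_{s+1}$ of $\mathscr T^*$, this produces $u'_i=t+2-u_{k+1-i}$. Simultaneously the parity of the row is flipped, so that the rightmost (resp. leftmost) extremal block in $\mathscr T$ corresponds to the leftmost (resp. rightmost) extremal block in $\mathscr T^*$, giving a bijection between the non-extremal blocks of the two rows.

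The heart of the proof is the entry identity
$$b^*(i,j)=t+1-b(t+2-i,\,j-1), \qquad j\geq 2, \ (i,j) \text{ non-extremal in } \mathscr T^*,$$
which I prove by induction on $j$. The base case $j=2$ follows from rule (iii) applied to $\mathscr T^*$ at the fully occupied row $R_1$: $b^*(i,2)=b^*(i-1,1)=i-1$, while non-extremality forces $t+2-i\in T$, so $b(t+2-i,1)=t+2-i$ and the right-hand side is also $i-1$. For the inductive step one observes that rule (iii) of \ref{3.1} transforms correctly under $*$: the horizontal mirror swaps left and right neighbours, while the one-row vertical shift flips the parity of the reference level. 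Consequently the left-neighbour prescription at level $j-1$ odd in $\mathscr T^*$ pulls back to the right-neighbour prescription at level $j-2$ even in $\mathscr T$ (and the other way around), producing the claimed $t+1$ minus $b$-entry by the inductive hypothesis at level $j-1$.

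Given the position bijection, the parity switch and the entry identity, the required equality $f_{R_{s+1}(\mathscr T^*)}=(f_{R_s(\mathscr T)})^*$ reduces to a direct substitution into the two cases of \ref{4.4}$(*)$ using $(c_\ell)^*=c_{t+1-\ell}$, $(r^\ell)^*=r^{t+2-\ell}$, and the reindexing $j=k+1-i$. The principal obstacle is the inductive entry identity, which demands simultaneous control of the parity flip, the left-right swap of neighbours, and the preservation of non-extremality under $(i,j)\leftrightarrow(t+2-i,j-1)$ at every level; once this is settled the remainder is pure bookkeeping.
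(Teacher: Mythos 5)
Your proof is correct and follows the same route as the paper: the paper's own proof is a one-sentence assertion that the lemma "follows from the definition of $f_{\mathscr T}$, the $*$ operation on diagrams and the assignment of entries," and your argument is precisely the detailed verification of that claim (first row of $\mathscr T^*$ giving $h^*-h$, the mirror-and-shift row bijection, and the entry identity $b^*(i,j)=t+1-b(t+2-i,j-1)$). No discrepancy to report.
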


\begin {proof} This follows from the definition of $f_\mathscr T$, the $*$ operation on diagrams given in \ref {2.2} and the assignment of entries in tableaux given in \ref {3.1}.
\end {proof}

\subsection{Class Function Property}\label{4.5}

\begin {lemma}  $f_\mathscr T$ is independent of the choice of $\mathscr T$ in its equivalence class.
\end {lemma}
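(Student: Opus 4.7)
The plan is to verify invariance of $f_{\mathscr T}$ under each of the three moves that generate the equivalence relation on diagrams (see \ref{2.3.7}): (c) insertion or removal of a pair of equal adjacent rows; (b) half-domino adjunction to the strongly extremal column; and (a) adjunction of a left-even or right-odd domino. By Lemma \ref{4.4} one has $f_{\mathscr T^*} = f_{\mathscr T}^*$, so in (a) it suffices to treat the left-even case and in (b) only the case $r = ht\,\mathscr D$ even; the dual cases then follow by applying $*$.

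Moves (c) and (b) are brief. For (c): if the rows $R_{j+1}$ and $R_{j+2}$ share the column support $\{u_1 < \cdots < u_k\}$ with (say) $j$ even, then the numbering rule of \ref{3.1} forces $b(u_i, j+2) = b(u_{i-1}, j+1)$ for $i \geq 2$; substituting into \ref{4.4}$(*)$ and re-indexing gives $f_{R_{j+2}} = -f_{R_{j+1}}$, so the pair contributes zero to $f_{\mathscr T}$. For (b): the added block sits atop the strongly extremal column $C_j$ as the sole block of the new row $R_{r+1}$, hence is extremal, hence contributes zero by \ref{4.4}$(*)$; every lower row has the same support and the same entries (these being determined inductively from below).

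Move (a) is the main step. Let $D_{j+1}^{j+2}$ be a left-even domino adjoined to $C_a$, and let $C_{u_{l+1}}$ be its right neighbour at level $j+2$ in the new diagram, so $u_l < a < u_{l+1}$ in the old $R_{j+1}$ (with the convention $l=0$ when $C_a$ becomes the new leftmost). Three identities read off from \ref{3.1} drive the cancellation:
\[
b^{\text{new}}(a,j+1) = b^{\text{old}}(p,j), \qquad b^{\text{new}}(u_{l+1},j+2) = b^{\text{new}}(a,j+1),
\]
and, when $l \geq 1$,
\[
b^{\text{new}}(a,j+2) = b^{\text{old}}(u_l, j+1) = b^{\text{old}}(u_{l+1}, j+2),
\]
with $C_p$ the right neighbour of $C_a$ at level $j$ (the same column in both diagrams, with the convention $b^{\text{new}}(a,1) = a$ if $j = 0$). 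Applying the first two identities to \ref{4.4}$(*)$ gives
\[
f_{R_{j+1}}^{\text{new}} - f_{R_{j+1}}^{\text{old}} = \bigl(c_{b^{\text{new}}(a,j+1)} - c_{b^{\text{old}}(u_l,j+1)}\bigr)(r^a - r^{u_{l+1}}),
\]
and the analogous rearrangement in $R_{j+2}$ yields the negative of this expression, so $f_{R_{j+1}} + f_{R_{j+2}}$ is unchanged. The third identity controls the possible rerouting of $\varphi_{j+2}$ in higher rows: a column whose right neighbour at level $j+2$ was $C_{u_{l+1}}$ may now be $C_a$, but the two carry the same entry, so every $f_{R_i}$ with $i > j+2$ is preserved. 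When $l = 0$ the third identity is vacuous (the block $B^{\text{new}}(a,j+2)$ is extremal) and no rerouting occurs, since $C_a$ is to the left of every column of $R_{j+2}$.

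The one potentially dangerous configuration --- $l \geq 1$ combined with $C_{u_{l+1}}$ left extremal in the old $R_{j+2}$ --- cannot arise: in that case the leftmost column $C_{u_1}$ of $R_{j+1}$ would be left extremal in $\mathscr D$ at the odd height $j+1 < r$, contradicting the left boundary condition of \ref{2.2}. Ruling out this case is short but delicate, and is the main obstacle in the argument.
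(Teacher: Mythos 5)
Your proof is correct and takes essentially the same approach as the paper's: invariance is checked move by move, with equal adjacent rows cancelling pairwise, the half-domino adding only an extremal (hence silent) block, and the domino case resolved by the same local two-row cancellation together with a boundary-condition argument excluding the degenerate extremal configuration. The only differences are cosmetic --- the paper treats removal of a right odd domino and deduces the left even case by duality, whereas you do the reverse, and you are somewhat more explicit about the non-propagation of entry changes to higher rows.
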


\begin {proof} Obviously $f_{R_s}=0$ if $k_s=1$.  Suppose $k_s=k_{s+1}$. This means that $B(u_j,s+1)$ is a block if and only if $B(u_j,s)$ is a block. Then by \ref {3.1}(iii), we obtain $b(u_{i+1},s+1)=b(u_i,s)$ (resp. $b(u_{i-1},s+1)$), for $s$ odd (resp. even). Consequently through $(*)$ we obtain $f_{R_{s+1}}+f_{R_s}=0$.

It remains to consider domino adjunction, which is slightly more subtle.

Consider an odd right domino $D$ being removed from column $C_b$ of height $s$ in $\mathscr T$.   By definition $s$ is odd and $C_b$ admits in levels $s,s-1$ a left neighbour $C_a$.  In particular $ht C_a \geq s$.  By \ref {3.1}(iii) one has $b(s,a)=b(s-1,b)$, since $s$ is odd.  If $C_b$ has a right neighbour $C_c$ at level $s-1$, then since $s-1$ is even and strictly less than the height of $\mathscr T$, the right boundary condition on $\mathscr T$ implies that $C_b$ has a right neighbour $C_d$ at level $s$.  The converse is immediate.  Obviously $c\leq d$ and from  \ref {3.1}(iii) we obtain $b(s,b)=b(s-1,c)$.

When $C_b$ has no right neighbour at level $s-1$ and so a fortiori no right neighbour at level $s$, the assertion is a easy case of what follows below and its proof will be omitted.

The contribution coming from $D$ as given by \ref {4.3}$(*),(**)$ is
$$b(s-1,c)(r^c-r^b)+b(s-1,b)(r^b-r^a)+b(s,a)(r^a-r^b)+b(s,b)(r^b-r^d)=b(s-1,c)(r^c-r^d).$$

Now in the removal of $D$ from $\mathscr T \setminus D$ we must not forget! that the \ref {3.1}(iii) implies a change of the entry $b(s,a)$ which becomes $b(s-1,c)$.  Then removal of $D$  gives a new term
$$b(s-1,c)(r^c-r^a)+b(s,a)(r^a-r^d)=b(s-1,c)(r^c-r^d).$$

The coincidence of these two terms proves our assertion for the removal (and hence also for adjunction) of odd right dominoes.  Then duality gives the assertion for even left dominoes.

\end {proof}

\subsection{Support}\label{4.6}

View the $c_i:i \in T$ as indeterminates. Given a polynomial $f$ in the $\{c_i\}_{i\in T}$, define $\Supp f$ as the set of all $i\in T$ such that $f$ depends non-trivially on $c_i$.

 Define $\Supp \mathscr T$ as the set of $i\in T$ such that the column $C_i$ is non-empty.   It is immediate from \ref {3.1} that $\Supp (f_\mathscr T -h)\subset \Supp \mathscr T$.  However equality generally fails. For example take $\mathscr T$ to be defined by the partition $(2,1,2,2)$ which is a complete tableau.

 \begin {lemma} Let $\mathscr T$ be a deplete tableau.  Then  $\Supp (f_\mathscr T-h) = \Supp \mathscr T$.
 \end {lemma}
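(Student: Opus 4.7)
The plan is to establish the nontrivial inclusion $\Supp \mathscr T \subseteq \Supp(f_\mathscr T - h)$, the reverse being already noted. Fix $k \in \Supp \mathscr T$, so that $C_k$ is non-empty. By \ref{3.1}(ii) one has $b(k,1) = k$, and since (as noted in \ref{2.2}) the boundary conditions force $C_{t+1}$ to be non-empty whenever $\mathscr T$ is, the block $B(k,1)$ is non-extremal at level $1$. Propagating by \ref{3.1}(iii) and using the injectivity of the $\theta_j$, I obtain a unique maximal ``$k$-path'' of non-extremal blocks $(a_1,1), (a_2,2), \ldots, (a_N,N)$ carrying the entry $k$, with $a_1 = k$ and $a_{j-1} = \varphi_{j-1}(a_j)$. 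Substituting these positions into the formula \ref{4.4}$(*)$ for $f_{R_j}$, each $(a_j,j)$ contributes $c_k(r^{a_j} - r^{b_j})$, where $b_j$ denotes the right (for $j$ odd) or left (for $j$ even) neighbor of $a_j$ at level $j$. By construction $b_j = a_{j+1}$ for $j < N$, so the sum over $j$ telescopes and the coefficient of $c_k$ in $f_\mathscr T - h$ equals $r^k - r^{b_N}$. The task is thus reduced to showing $b_N \neq k$.

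Suppose for contradiction $b_N = k$. The case $N = 1$ is immediate, since $b_1$ is the right neighbor of $k$ at level $1$ and so $b_1 > k$. Assume $N \geq 2$. I first rule out $ht\, C_k > N$: in this case $B(k, N+1)$ is a block, and the propagation rule combined with $b_N = k$ (which gives $\varphi_N(k) = a_N$) yields $b(k, N+1) = b(a_N, N) = k$. Were $B(k, N+1)$ non-extremal, it would extend the $k$-path past level $N$, contradicting maximality. Hence $B(k, N+1)$ is extremal, meaning $k$ is leftmost (for $N$ odd) or rightmost (for $N$ even) at level $N+1$. Since $b_N = k$ forces $a_N < k$ for $N$ odd and $a_N > k$ for $N$ even, $C_{a_N}$ lies on the side of $k$ where every column has height $< N+1$; thus $ht\, C_{a_N} = N$, and moreover the left-extremal (for $N$ odd) or right-extremal (for $N$ even) column at level $N$ on that side has height $N$, which violates the corresponding boundary condition since $ht\, \mathscr T \geq N+1 > N$ and $N$ has the wrong parity. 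This contradiction excludes $ht\, C_k > N$.

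Hence $ht\, C_k = N$. The top domino of $C_k$ sits at levels $N-1, N$, and by depleteness it cannot be a left-even (for $N$ even) or right-odd (for $N$ odd) domino. By $b_N = k$, the column $C_{a_N}$ (of height $\geq N$) is the right (resp. left) neighbor of $C_k$ at level $N$; if the same neighbor-relation holds at level $N-1$, the top of $C_k$ is precisely the forbidden removable domino. If instead it fails at level $N-1$, some column of height exactly $N-1$ (heights $\geq N$ in $(k, a_N)$ being forbidden by $b_N = k$) blocks it, and the one closest to $a_N$ is exactly $a_{N-1}$. The same argument then applies to the top of $C_{a_{N-1}}$, which lies at levels $N-2, N-1$ and is of opposite parity, and depleteness again forbids removability. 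Iterating on $a_{N-2}, a_{N-3}, \ldots$ produces blocking columns of strictly decreasing heights $N-1, N-2, \ldots$ in nested subintervals of $(k, a_N)$. Since heights strictly decrease and the tableau is finite, the recursion must terminate; this can only happen when no further blocking column can be supplied, at which point the forbidden removable domino becomes unavoidable, yielding the contradiction. The main obstacle is this termination argument: one must verify at each stage that the blocking column forced by depleteness is consistent with the actual $a_j$ data of the $k$-path, so that the iteration does not break down before the contradiction is reached.
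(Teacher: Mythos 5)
Your reduction of the statement to the single claim $b_N\neq k$ is correct and is essentially the computation underlying the paper's proof: the entry $k$ occupies at most one non-extremal block per row, each such block $(a_j,j)$ contributes $c_k(r^{a_j}-r^{b_j})$ by \ref{4.4}$(*)$, and since $b_j=a_{j+1}$ for $j<N$ the sum telescopes to $c_k(r^k-r^{b_N})$. Your exclusion of $ht\,C_k>N$ via the boundary conditions is also sound (the leftmost, resp.\ rightmost, column of height $\geq N$ on the relevant side of $C_k$ would be a left, resp.\ right, extremal column of height $N$ of the wrong parity). The problem is the endgame. After you reach $ht\,C_k=N$, your recursion identifies the successive ``blocking columns'' with $a_{N-1},a_{N-2},\ldots$, but this identification fails already at the second step: for $N$ even the obstruction to removing the top domino of $C_{a_{N-1}}$ (a right odd domino) is a column of height $N-2$ lying to the \emph{left} of $a_{N-1}$, whereas $a_{N-2}=\varphi_{N-2}^{-1}$-wise is the \emph{right} neighbour of $a_{N-1}$ at level $N-2$ and so lies in $(a_{N-1},a_N]$. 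Divorced from the path data, the recursion is just ``depleteness supplies a blocking column of height one less at each stage,'' and that terminates harmlessly: a blocking column of height $1$ has no top domino, and the configuration is perfectly consistent with depleteness. So the contradiction does not ``become unavoidable,'' and the step you yourself flag as the main obstacle is a genuine gap, not a routine verification.

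The gap is repairable without any recursion, precisely by using the path data you set aside. Suppose (say $N$ even) that $a_{N-1}=m$ with $k<m<a_N$ and $ht\,C_m=N-1$. Going down the path, $a_{j-1}$ is the left neighbour of $a_j$ at level $j-1$ for $j-1$ odd and the right neighbour for $j-1$ even; since $C_m$ has height $N-1\geq j$ for all $j\leq N-1$ and sits to the left of every subsequent right-step, an easy downward induction gives $a_j\geq m>k$ for all $j\leq N-1$, contradicting $a_1=k$. Hence $a_{N-1}=k$, the neighbour relation between $C_k$ and $C_{a_N}$ holds at levels $N-1$ and $N$, and the top domino of $C_k$ is a removable left even (resp.\ right odd) domino, contradicting depleteness. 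For comparison, the paper avoids this issue altogether by exploiting the cancellation row by row rather than only at the endpoint: it shows the $i$-path is forced to return to $C_i$ at every odd level and to a common right neighbour $C_{i_m}$ at the two levels $2m-1,2m$, so the removable domino appears immediately at the top of $C_i$.
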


 \begin {proof}

  To prove our claim suppose that $c_i \notin \Supp (f_\mathscr T-h)$, for some $i \in T$. We must show that $C_i$ is empty.
Suppose $C_i$ has height $s>0$.  By \ref {3.1}(ii), the entry in the first row of $C_i$ equals $i$.

  By the right boundary condition there exists a right neighbour $C_{i_1}$ to $C_i$ at level $1$.  Then the first rows of these two columns contribute a factor of $c_i(r^i-r^{i_1})$ to $f_\mathscr D$. By our hypothesis  $c_i \notin \Supp (f_\mathscr T-h)$ and so both $c_ir^i$ and $c_ir^{i_1}$ must cancel with the remaining terms $(f_\mathscr T-h)$.  Since entries strictly increase in rows, it follows that these terms must come from levels $>1$. Thus $C_i,C_{i_1}$ must both have height $\geq 2$ and then by \ref {3.1}(iii) that the entry in the second row of $C_{i_1}$ is $i$.  Then indeed $(f_\mathscr T-h)$ admits the factor $c_i(r^{i_1}-r^{i})$ coming from the second level and which cancels with the factor given above.

 %If $s>2$, then by \ref {3.1}(iii), $i$ is the entry of the third row of $C_i$.   By the right boundary condition there is a right neighbour $C_{i_2}$ to $C_i$ at level $3$.   Then the third rows of these two columns contributes a factor of $c_i(r^i-r^{i_2})$ to $f_\mathscr t$. By our hypothesis  $c_i \notin \Supp f_\mathscr D$, so as above it follows that $C_i,C_{i_2}$ have height $\geq 4$ and then by \ref {3.1}(iii) the entry in the fourth row of $C_{i_2}$ is again $i$ giving a factor which cancels with the previous one.

  Continuing by induction on $m=2,3,\ldots,$ it follows as above that if $s \geq 2m-1$, the entry in the $(2m-1)^{th}$ row of $C_i$ is $i$ and then that $C_i$ has a right neighbour $C_{i_m}$ at levels $2m-1,2m$ with entry $i$ in its $2m^{th}$ row. In particular $s \geq 2m$.

   We conclude that $s$ is even and that $C_i$ has a right neighbour at levels $s-1,s$ of height $\geq s$.  Thus a left even domino may be removed from $C_i$.  This contradicts the hypothesis that $\mathscr T$ is deplete.

 \end {proof}

\subsection{Class Separation}\label{4.7}

Let $\mathscr D$ be a non-empty diagram of order $t+1$ with an empty column $C_i$.  By the right boundary condition $i\leq t$. Removal of this column gives a diagram $\mathscr D'$ of order $t$ which still satisfies the boundary conditions.  Via \ref {3.1}, the corresponding tableau $\mathscr T$ and $\mathscr T'$ also coincide except that the set $T$ should be replaced by $T^-:=T\setminus \{i\}$.  Clearly if $\mathscr T$ is deplete, then so is $\mathscr T^*$.  We call this procedure collapsing an empty column.

\begin {lemma} The map $\mathscr T \mapsto f_\mathscr T$ restricted to deplete tableaux is injective.
\end {lemma}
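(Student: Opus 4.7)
The plan is induction on $t$, the base case $t=0$ being trivial since the only order-$1$ deplete tableau is empty and has $f = h$. Given deplete $\mathscr T_1, \mathscr T_2$ of order $t+1$ with $f_{\mathscr T_1} = f_{\mathscr T_2}$, applying Lemma \ref{4.6} to each side yields
$$\Supp \mathscr T_1 = \Supp(f_{\mathscr T_1} - h) = \Supp(f_{\mathscr T_2} - h) = \Supp \mathscr T_2,$$
so the two tableaux share exactly the same empty columns.

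If some $i \in T$ has $C_i$ empty in both tableaux, I would apply the column-collapsing described in \ref{4.7} to reduce to order $t$. Because the reading rules \ref{4.4}$(*),(**)$ sum only over columns of positive height, the row contributions $f_{R_s}$ are literally unchanged by removing an empty column; the only change is that the driving function becomes $h' = h + c_i m^i$. Hence $f_{\mathscr T'_j} = f_{\mathscr T_j} + c_i m^i$ for $j=1,2$, so the equality descends to $f_{\mathscr T'_1} = f_{\mathscr T'_2}$ at order $t$. Collapsing clearly preserves depleteness, so the inductive hypothesis delivers $\mathscr T'_1 = \mathscr T'_2$, whence $\mathscr T_1 = \mathscr T_2$.

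It remains to treat the case where every column is non-empty in both tableaux. Here row $1$ is fully populated with $b(i,1)=i$ for $i \in T$, contributing the fixed term $f_{R_1} = \sum_{i=1}^t c_i(r^i - r^{i+1})$, so the problem reduces to showing that the equality $\sum_{s \geq 2} f_{R_s}^{(1)} = \sum_{s \geq 2} f_{R_s}^{(2)}$ forces equality of the two tableaux. I would run a secondary induction on height: the neighbour injection $\varphi_1$ of \ref{3.1} writes each row-$2$ entry as the value of the column immediately to its left in row $1$, so the row-$2$ contribution depends linearly on the $c_{\varphi_1(i)}$ for precisely those columns $C_i$ of height $\geq 2$, and an analog of the support argument of Lemma \ref{4.6} at level $2$ detects which columns reach that height. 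The main obstacle is that depleteness is not preserved by naive row-stripping, since columns of height $1$ would become empty and the residual tableau may admit left-even or right-odd domino removals absent in $\mathscr T$; the level-by-level induction must therefore be set up with care, most likely by identifying which combinatorial data of the deplete representative survive in $\sum_{s \geq 2} f_{R_s}$ and using duality \ref{4.3} together with the uniqueness of the deplete representative in each weak equivalence class (Lemma \ref{2.3.1}) to close the loop.
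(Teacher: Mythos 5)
Your reduction steps match the paper exactly: Lemma \ref{4.6} forces $\Supp\mathscr T_1=\Supp\mathscr T_2$, and when a common empty column exists you collapse it and induct on $t$, which is precisely what the paper does. The problem is the remaining case, where both deplete tableaux have full support $T$. This is the heart of the lemma, and you do not actually prove it: you sketch a ``secondary induction on height'' by stripping rows, then explicitly concede that depleteness is destroyed by row-stripping and that the induction ``must be set up with care,'' ending with the hope that duality and uniqueness of the deplete representative will ``close the loop.'' That is a plan with an acknowledged unresolved obstacle, not an argument, so the proof is incomplete as it stands.

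What the paper does at this point is much shorter and is the step you are missing. By Lemma \ref{4.4}, $f_{\mathscr T_1}=f_{\mathscr T_2}$ implies $f_{\mathscr T_1^*}=f_{\mathscr T_1}^*=f_{\mathscr T_2}^*=f_{\mathscr T_2^*}$. Now if $\Supp\mathscr T_1=\Supp\mathscr T_2=T$, every column is non-empty; since a deplete diagram is reduced (Lemma \ref{2.3.6}), the second row cannot also be full, so some column has height exactly one, and after applying $*$ (which raises every height by one) and cancelling the two resulting full bottom rows, that column becomes empty. Hence $\Supp\mathscr T_1^*=\Supp\mathscr T_2^*\varsubsetneq T$, the already-settled proper-support case gives $\mathscr T_1^*=\mathscr T_2^*$, and involutivity of duality gives $\mathscr T_1=\mathscr T_2$. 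I would recommend replacing your height induction, which as you note runs into genuine difficulties with preserving depleteness level by level, with this one-line dualization.
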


\begin {proof}

 By Lemma \ref {4.6} we only have to prove the assertion for tableaux with the same support. The assertion is trivial for $t=1$. If the support of $\mathscr T, \mathscr T'$ is not $T$, then the assertion holds through induction hypothesis on $t$ by collapsing an empty column.  Otherwise we proceed as follows.

Suppose $f_\mathscr T=f_{\mathscr T'}$. Then $f_{\mathscr T^*}=f_\mathscr T^*=f_{\mathscr T'}^*=f_{\mathscr T'^*}$.  On the other hand if $\Supp\mathscr T=\Supp\mathscr T' =T$,   then $\Supp \mathscr T^*=\Supp \mathscr T'^* \varsubsetneq T$. Thus by the first part $\mathscr T^*=\mathscr T'^*$ and so $\mathscr T = \mathscr T'$, as required.

\end {proof}

\subsection{}\label{4.8}

Recall \ref {2.4}.  In view of Lemmas \ref {4.5} and \ref {4.7} it makes sense to write $f_\mathscr T \in H$ whenever $\mathscr T \in H$.

  \section{Complete Tableaux}\label{5}

  Throughout this section we view the $c_i:i \in T$ and the $m^i:i \in \hat{T}$ as indeterminates.

\subsection{}\label{5.1}

Let $\mathscr T$ be a complete tableau.  The aim of this section is to describe how to read off $f_\mathscr T$ from the height function of $\mathscr T$.  This calculation is made easier by the fact that $\mathscr T$ is well-numbered (\ref {3.1}).

These results will allow us to show that the coefficient of $m_i$ in $f_\mathscr T$ is zero (resp. $\pm c_i$) exactly  when $C_i$ is the strongly extremal (resp. a quasi-extremal) column of $\mathscr T$.

Again suppose $\mathscr T \in H_k, \mathscr T' \in H_j$. Then we will be able to compute when
$$f_\mathscr T-f_{\mathscr T'}=c_i(r^k-r^j),\eqno{(*)}$$
for some $i \in T$ and show that one cannot have $f_\mathscr T-f_{\mathscr T'}=-c_i(r^k-r^j)$, which we call the opposite of $(*)$.

A further consequence is that for a \textit{complete} tableau $\mathscr T$ the partial order defined in \ref {3.2.1}$(**)$ (resp. \ref {3.2.1}$(*)$) determines the sectors in which $f_\mathscr T$ (resp. $f^*_\mathscr T$) satisfies the positivity condition (as defined below).  This will be used to show that the partial order on a complete tableau is weaker than the partial order defined on any tableau in its equivalence class.

%Finally as a by-product we will obtain an intrinsic proof of Lemma \ref {2.3.7} and of the first part of Lemma \ref {2.3.1}.

\subsection{}\label{5.2}

Let $\mathscr T$ be a tableau and set $u:=ht \mathscr T$ is even. Let $P_u$ denote the union of rows $R_u,R_{u-1}$.  Set $f_{P_u}=f_{R_u}+f_{R_{u-1}}$.

 Let $C_{i_1},C_{i_2},\ldots, C_{i_m}$ be the columns of $\mathscr T$ of height $u$.  Interspersed between $C_{i_j}$ and $C_{i_{j+1}}$ there are columns $C_{j_{k_1}},\ldots, C_{j_{k_n}}$ of height $u-1$. Set $j_{k_{0}}=i_j,j_{k_{n+1}}=i_{j+1}$.

 Recall that $b(j_{k_{n}},u-1)=b(j_{k_{n+1}},u)$. From \ref {4.4}$(*)$ we obtain

$$\sum_{\ell=0}^{n} c_{b(j_{k_\ell},u-1)}(r^{j_{k_{\ell}}}-r^{j_{k_{\ell+1}}})+c_{b(j_{k_{n+1}},u))}(r^{i_{j+1}}-r^{i_{j}})\quad \quad $$
$$\quad \quad \quad \quad \quad  =\sum_{\ell=0}^{n-1} (c_{b(j_{k_\ell},u-1)}-c_{b(j_{k_n},u)})(r^{j_{k_{\ell}}}-r^{j_{k_{\ell+1}}}).$$

The condition of positivity of this expression is that $c_{b(j_{k_\ell},u-1)}\geq c_{b(j_{k_n},u)}$, for all $\ell=1,2, \ldots,n-1$.

Of course a similar result holds for all $j=1,2,\ldots,m-1$.  In the case $j=m$ we obtain instead the expression
$$\sum_{\ell=0}^{n-1}c_{b(j_{k_\ell},u-1)}(r^{j_{k_{\ell}}}-r^{j_{k_{\ell+1}}}).$$

The overall contribution  to $f_{P_u}$ is the sum of all these expressions.

 Then the condition for the positivity of $f_{P_u}$ is just the set inequalities which result from the partial order given in   \ref {3.2.1}$(*)$.

 The expression for $f_\mathscr T$ is the sum of $f_{P_u}+f_{P_{u-2}}+\ldots+f_{P_2}$.  However since there may be cancellations, the condition for positivity of $f_\mathscr T$ can be \textit{ weaker } than that expressed by the combined set of inequalities above.

 Now suppose that $\mathscr T$ is complete. Recall (Lemma \ref {2.3.2}) that $ht C_1=u$ and $ht C_{t+1}\geq u-1$.
 By Lemma \ref {2.3.3}, the height function for a complete tableau implies that $j_{k_n}=i_{j+1}-1$. Since $\mathscr T$ is well-numbered the above equations become

 $$\sum_{\ell=0}^{n-1} (c_{j_{k_\ell}}-c_{j_{k_n}})(r^{j_{k_{\ell}}}-r^{j_{k_{\ell+1}}}).\eqno{(*)}$$

$$\sum_{\ell=0}^{n-1} c_{j_{k_\ell}}(r^{j_{k_{\ell}}}-r^{j_{k_{\ell+1}}}).\eqno{(**)}$$

The set of indices occurring in $(*)$ or in $(**)$, namely $\{j_{k_0},j_{k_1},\dots,j_{k_n}\}$, consists of $j_{k_0}$ which is an index of a column of height $u$ whose neighbour to the right has height $<u$ and $\{j_{k_1},\dots,j_{k_n}\}$ which is the set of indices of columns of height $u-1$.

\
We conclude that the indices occurring in the expression for $f_{P_u}$ are amongst those which label the columns of heights $u,u-1$.

\

 It follows from the above that the indices occurring in $f_{P_u},f_{P_{u-2}}, \ldots,f_{P_2}$ are pairwise distinct.

 \

 These conclusions are unchanged when $u$ is odd.  Indeed if $u$ is odd one has
 $$f_{R_u}=\sum_{\ell=1}^{m-1}c_{i_\ell}(r^{i_\ell}-r^{i_{\ell+1}}), \eqno{(***)}$$
 and again the indices occurring in $f_{R_u},f_{P_u},f_{P_{u-2}}, \ldots,f_{P_2}$ are pairwise distinct.

 These conclusions may be summarized as follows.

 \begin {lemma} Let $\mathscr T$ be a complete tableau of height $2v$ (resp. $2v+1$).  Then the indices occurring in $f_{P_{2i}}:i=1,2,\ldots, v$ (resp. and $f_{R_{2v+1}}$) are pairwise distinct.
 \end {lemma}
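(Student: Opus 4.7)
My plan is to verify the lemma as a direct corollary of the formulas $(*),(**),(***)$ already derived, together with the observation that each column of $\mathscr T$ carries a single well-defined height. The argument splits into two stages: a localization step which identifies the possible $c$-subscripts occurring in each $f_{P_u}$, and a disjointness step which notes that the relevant height pairs are pairwise non-overlapping.

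For the localization step, I would read off the $c$-subscripts directly from the formulas above. When $u$ is even, the generic section contribution $(*)$ involves only the subscripts $j_{k_0}=i_j$ (a column of height $u$) together with $j_{k_1},\ldots,j_{k_n}$ (the interspersed columns of height $u-1$); the terminal section $(**)$ contributes subscripts of exactly the same two types. Consequently every $c$-subscript in $f_{P_u}$ is the index of a column of $\mathscr T$ of height $u-1$ or $u$. When $u=2v+1$ is odd, formula $(***)$ shows that $f_{R_{2v+1}}$ involves only subscripts that are indices of columns of height exactly $2v+1$. Most of this is already built into the preceding derivation; the only point requiring a line of care is to confirm that no subscript attached to a shorter or taller column can slip in, which follows from the well-numbered rule (Lemma \ref{3.1}) governing the entries $b(i,j)$ in a complete tableau and from the substitutions carried out to pass from the raw expression $c_{b(j_{k_\ell},u-1)}-c_{b(j_{k_{n+1}},u)}$ to the simplified form $c_{j_{k_\ell}}-c_{j_{k_n}}$.

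The disjointness step is then immediate. For distinct $j,j'\in\{1,\ldots,v\}$ the pairs $\{2j-1,2j\}$ and $\{2j'-1,2j'\}$ are disjoint subsets of $\mathbb N$, and both are disjoint from the singleton $\{2v+1\}$ arising in the odd-height case. Since each column of $\mathscr T$ carries a single height, no column index can simultaneously belong to two of these sets; hence the corresponding $c$-subscripts cannot overlap across distinct $f_{P_{2j}}$, nor between any $f_{P_{2j}}$ and $f_{R_{2v+1}}$. This is precisely the pairwise distinctness asserted. I do not foresee any genuine obstacle here: once the formulas $(*)$--$(***)$ have been correctly parsed, the whole lemma is a purely structural observation about the bookkeeping of column heights.
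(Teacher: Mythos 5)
Your strategy is the same as the paper's: localize the $c$-subscripts of $f_{P_{2i}}$ to the labels of columns of height $2i$ or $2i-1$ (and those of $f_{R_{2v+1}}$ to columns of height $2v+1$), then conclude by disjointness of the height classes. The disjointness step is fine. The gap is in the localization step, and it sits exactly where you declare there to be no obstacle. For the top pair of rows the delimiting columns $C_{i_1},\ldots,C_{i_m}$ have height exactly $u=ht \mathscr T$ by definition, so your parenthetical ``$j_{k_0}=i_j$ (a column of height $u$)'' is automatic there. But for a lower pair $P_{2i}$ with $2i<ht \mathscr T$, the sections of the rows $R_{2i},R_{2i-1}$ are delimited by \emph{all} columns of height $\geq 2i$, the taller ones included; a priori the index of a column of height, say, $2i+1$ or $2i+2$ could appear in $f_{P_{2i}}$ and then collide with an index contributed by $f_{R_{2i+1}}$ or $f_{P_{2i+2}}$. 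Neither of the two reasons you cite --- the well-numbered rule and the substitution passing from $c_{b(j_{k_\ell},u-1)}-c_{b(j_{k_{n+1}},u)}$ to $c_{j_{k_\ell}}-c_{j_{k_n}}$ --- addresses this; they only tell you \emph{which} integer labels a given block, not that tall delimiters cannot contribute.

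What actually rules it out is completeness via Lemma \ref{2.3.3}. The index $i_j$ of a delimiter enters \ref{5.2}$(*)$ only when $n\geq 1$, i.e. only when some column of height exactly $2i-1$ lies strictly between $C_{i_j}$ and $C_{i_{j+1}}$ (when $n=0$ the two terms of that section cancel), and in that case the immediate right neighbour of $C_{i_j}$ has height $<2i$. The Remark to Lemma \ref{2.3.3} (the height function of a complete diagram rises by at most one, resp. two, on passing to the left from an even, resp. odd, value) then forces $ht C_{i_j}=2i$ exactly. This is the content hiding in the paper's phrase ``a column of height $u$ whose neighbour to the right has height $<u$''. That the point is not mere bookkeeping is shown by the Remark following the lemma itself: for the non-complete tableau $(3,1,2,3)$ the conclusion fails. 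You should insert the Lemma \ref{2.3.3} argument to close the localization step; with it, your proof coincides with the paper's.
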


 \textbf{Remark}.  This is false for a tableau which is not complete.  Take $\mathscr T =(3,1,2,3)$ which is not complete.  One checks that $f_{P_4}=f_{R_3}=c_2(r^1-r^4),f_{P_2}=(c_1-c_2)(r^1-r^2)$.

 \subsection{}\label{5.3}

Recall (\ref {4.2}) the notion of a sector.  Let $\mathscr T$ be a complete tableau.  From Lemma \ref {5.2}, it is immediate that

\

(i)  The sectors in which $f_\mathscr T$ satisfies positivity are exactly those  given by \ref {3.2.1}$(*)$ (where we recall that $j$ is even).

\

Of course a similar result holds for $\mathscr T^*$.  It gives the following result.

\

 (ii) The sectors in which $f^*_\mathscr T$ satisfies positivity are exactly obtained as being the dual sectors defined by the inequalities  given by \ref {3.2.1}$(**)$ (where we recall that $j$ is odd).

 \

We may summarize these conclusions in the following manner.  Recall the partial order $P(\mathscr T)$ defined in \ref {3.2.1}.

\begin {lemma}  Let $\mathscr T$ be a tableau and $\mathscr T^c$ be a complete tableau in its equivalence class. Then the sectors defined by $P(\mathscr T)$ are a subset of those defined by $P(\mathscr T^c)$.  Moreover the sectors defined by $P(\mathscr T^c)$ are exactly those defined by the combined positivity conditions on $f_\mathscr T$ and on $f^*_\mathscr T$ (as made more precise by (i) and (ii) above).
\end {lemma}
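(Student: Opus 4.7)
The lemma splits into two assertions: (A) the sector inclusion and (B) the characterisation of sectors of $P(\mathscr T^c)$ via combined positivity. I would dispatch (B) first, then deduce (A) from it.

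For (B), apply Lemma \ref{4.5} to write $f_\mathscr T = f_{\mathscr T^c}$ and $f^*_\mathscr T = f^*_{\mathscr T^c}$. Since $\mathscr T^c$ is complete, (i) identifies the positivity sectors of $f_{\mathscr T^c}$ with the sectors cut out by the inequalities \ref{3.2.1}$(*)$ of $\mathscr T^c$ (the even-$j$ half of $P(\mathscr T^c)$), and (ii) identifies those of $f^*_{\mathscr T^c}$ with the sectors cut out by \ref{3.2.1}$(**)$ (the odd-$j$ half). As $P(\mathscr T^c)$ is by its very definition the union of these two families of relations, intersecting the two positivity loci yields exactly the sectors of $P(\mathscr T^c)$. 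This is (B).

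For (A), the plan is to reduce to (B) by establishing
\[
\{\textbf{c} : P(\mathscr T) \text{ holds at } \textbf{c}\} \ \subseteq \ \mathrm{Pos}(f_\mathscr T) \cap \mathrm{Pos}(f^*_\mathscr T),
\]
where $\mathrm{Pos}(g)$ denotes the set of $\textbf{c}$ making $g$ a non-negative combination of $h$ and the consecutive differences $r^k - r^{k+1}$. Granting this inclusion, (B) converts the right-hand side into the sector of $P(\mathscr T^c)$, giving (A). The inclusion itself is obtained by reworking the row-pair decomposition of \ref{5.2} for an arbitrary tableau. For each even $u$, the identity $b(j_{k_n}, u-1) = b(j_{k_{n+1}}, u)$ furnished by \ref{3.1}(iii) lets one telescope $f_{P_u} = f_{R_u} + f_{R_{u-1}}$ into a sum of terms of the shape $(c_a - c_b)(r^i - r^j)$ with $i < j$; each such term is a non-negative combination of the $r^k - r^{k+1}$ precisely when $c_a \geq c_b$, and the collection of these conditions is exactly \ref{3.2.1}$(*)$ for $\mathscr T$ at level $u$. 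When $ht \mathscr T$ is odd the lone top row contributes via \ref{4.4}$(*)$ automatically as a non-negative combination. Thus the even-$j$ half of $P(\mathscr T)$ secures positivity of $f_\mathscr T$. The parallel argument for $f^*_\mathscr T = f_{\mathscr T^*}$ (via the Lemma of \ref{4.4}) translates, through the column reversal and height shift by one intrinsic to duality (which exchange left/right neighbours and flip the parity of $j$), the relations \ref{3.2.1}$(*)$ for $\mathscr T^*$ at even level into the relations \ref{3.2.1}$(**)$ for $\mathscr T$ at odd level, so that the odd-$j$ half of $P(\mathscr T)$ delivers positivity of $f^*_\mathscr T$.

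The main obstacle I anticipate is carrying the telescoping of \ref{5.2} through for tableaux that are not complete. The cleaner form in \ref{5.2} invokes well-numbering and the conclusion $j_{k_n} = i_{j+1} - 1$ of Lemma \ref{2.3.2}, neither of which is available in general. However, these enter there only to simplify the coefficients of the telescoped expression; the telescoping itself, and the resulting differences $c_a - c_b$, are already supplied by \ref{3.1}(iii) alone, and their signs are governed exactly by \ref{3.2.1}$(*)$ applied to $\mathscr T$. Cancellations between distinct row pairs may strictly enlarge $\mathrm{Pos}(f_\mathscr T)$ beyond the sector cut out by the even-$j$ relations of $P(\mathscr T)$, as the Remark following Lemma \ref{5.2} already illustrates; but this only strengthens, rather than obstructs, the inclusion I need.
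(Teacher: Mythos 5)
Your proposal is correct and follows essentially the route the paper intends: part (B) is exactly the paper's summary of (i) and (ii) transported through the class‑function property (Lemma \ref{4.5}), and part (A) is obtained, as in the general‑tableau half of \ref{5.2}, by telescoping each row pair via \ref{3.1}(iii) so that the relations of $P(\mathscr T)$ force positivity of $f_{\mathscr T}$ and $f^*_{\mathscr T}$ (cancellations only weakening the requirement, in the right direction), then invoking (B); the paper's Remark even notes that the alternative direct approach via domino adjunction is infeasible. No gaps.
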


\textbf{Remark}.  In principle the first part can be proved by studying the behaviour of $P(\mathscr T)$ under domino adjunction.  However this behaviour seems too complicated for this calculation to be feasible.

\

\textbf{Example 1.}  Take $\mathscr T=(2,1,0,2)$ which has completion $\mathscr T^c=(2,1,2,2)$.  Then $P(\mathscr T)=\{2<1,2<3\}$, whereas $P(\mathscr T^c)=\{2<1\}$.  In fact $f_T=h+(c_1-c_2)(r^1-r^2),f^*_T=h+c_1(r^1-r^2)+c_2(r^2-r^4)$.

\

\textbf{Example 2.}  Take $\mathscr T=(4,2,1,3,4)$.  Then $f_\mathscr T= (c_1-c_4)(r^1-r^4)+(c_2-c_3)(r^3-r^4)$, which satisfies positivity in the sectors defined by the inequalities $c_1\geq c_4,c_2\geq c_3$.  Yet $f_\mathscr T$ also satisfies positivity when $c_1\geq c_4$ and $c_1+c_2\geq c_3+c_4$ which is \textit{part} of some further sectors.  This is why we speak of positivity in an\textit{ entire} sector.

\

\textbf{Definition}.  For all $\textbf{c}$, viewed as a linear order $L(\mathscr T)$ on a tableau $\mathscr T$, let $H^{t+1}(\textbf{c})$ (or simply $H(\textbf{c})$) denote the subset of $H^{t+1}$ of all complete tableaux $\mathscr T$ such that $P(\mathscr T)$ lifts to $L(\mathscr T)$.

\

 By the lemma, for all $\mathscr T \in H(\textbf{c})$, the functions $f_\mathscr T, f^*_\mathscr T$ satisfy the positivity conditions in the sector defined by $L(T)$.  Moreover we obtain, in the notation of \ref {4.3}, that
 $$(H^{t+1}(\textbf{c}))^* =H^{t+1}(\textbf{c}^*). \eqno {(*)}$$

\subsection{}\label{5.4}

Recall the conventions and notations of \ref {4.4}. Set $K_s=\{1,2,\ldots, k_s\}$.  Let $\mathscr T_s$ be the tableau formed from the first $s$ rows of $\mathscr T$.  Then $\{C_{u_i}\}_{i \in K_s}$ is the set of columns of $\mathscr T_s$ of height $s$.  Let $K_{s+1}=\{i \in K_s|B(i,s+1) \ \text{is a block}\}$.   Then $\{C_{u_i}\}_{i \in K_{s+1}}$ is the set of columns of $\mathscr T_{s+1}$ of height $s+1$.

Recall \ref {2.3.4} and \ref {3.1}.

In what follows we take $s$ odd (resp. even).  Then $C_{u_i}$ is the strongly extremal column of $\mathscr T_s$ if $i$ is the minimal (resp. maximal) element of $K_s$.  Again $B(u_i,s)$ is the extremal block of $R_s$ if $i$ is the maximal (resp. minimal) element of $K_s$.

In the notation of  \ref {4.4}, set $h^s=h+\sum_{i=1}^{s-1}f_{R_i}$.

The following result is crucial to the construction of the dual Kashiwara functions.

\begin {lemma}  Let $\mathscr T$ be a tableau and $C_i$ its unique strongly extremal column.   Then the coefficient of $m_i$ in $f_\mathscr T$ equals zero.
\end {lemma}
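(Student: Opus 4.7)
By Lemma \ref{4.5} the value $f_\mathscr T$ depends only on the equivalence class of $\mathscr T$, and by Lemma \ref{2.3.7} every class contains a complete representative, so I may assume $\mathscr T$ is complete. The $*$-operation of \ref{4.3} intertwines $C_i \leftrightarrow C_{t+2-i}$ with $f_\mathscr T \leftrightarrow f_{\mathscr T^*}$ via $(m^i)^* = -m^{t+2-i}$, and produces within each equivalence class complete representatives of both parities of height; hence it suffices to prove the statement when $r := ht\mathscr T$ is even. The empty tableau is the base case: $f_\emptyset = h = -\sum_{j=1}^t c_j m^j$ has zero coefficient of $m^{t+1}$, and $C_{t+1}$ is its strongly extremal column.

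Assume $\mathscr T$ is complete and non-empty with $r$ even, and let $C_i$ be its strongly extremal column, so $i = \max U_r$ where $U_s := \{u : ht C_u \geq s\}$. Expanding $f_\mathscr T = h + \sum_{s=1}^r f_{R_s}$ via $r^a - r^b = \sum_\ell (m^\ell + m^{\ell+1})$ and the well-numbering (see \ref{3.1}) $b(u,s) \in \{u, u-1\}$ of non-extremal blocks, I obtain the coefficient of $m^i$ in $f_\mathscr T$ as an explicit $\mathbb Z$-linear combination of the $c_j$'s; by algebraic independence of the $c_j$'s it suffices to show that each coefficient of $c_j$ vanishes separately. Writing $k_s$ and $j_s$ for the predecessor and successor of $i$ in $U_s$ when they exist, the contribution of $c_j$ to the coefficient of $m^i$ falls into four cases: (i) $j < i-1$ yields $\#\{s \text{ odd} : k_s = j\}$; (ii) $j = i-1$ yields $\#\{s \text{ odd} : k_s = i-1\} - \#\{s \text{ even} : k_s \text{ exists}\}$; (iii) $j = i$ yields $-\mathbf{1}[i \leq t] + \#\{s \text{ odd} : j_s \text{ exists}\} - \#\{s \text{ even} : j_s = i+1\}$; (iv) $j > i$ yields $-\#\{s \text{ even} : j_s = j+1\}$.

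Each count vanishes by combining the height-function constraints of Lemmas \ref{2.3.2} and \ref{2.3.3} with the right boundary condition of \ref{2.2}. In case (i), any odd $s \leq r$ with $k_s = j$ would force $ht C_{i-1} \leq s-1 \leq r-2$, but Lemma \ref{2.3.3}(ii) applied to the step $C_{i-1} \to C_i$ (with $ht C_i = r$ even) forces $ht C_{i-1} \geq r - 1$, a contradiction; case (iv) is symmetric, using $ht C_{i+1} \geq r-2$ from Lemma \ref{2.3.3}(i) together with the fact that $ht C_{j+1} \geq s$ for some $j > i$ and even $s \leq r$ would force $s = r$, contradicting $ht C_{j+1} < r$. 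In case (ii), Lemma \ref{2.3.2}(i) gives $ht C_1 = r$ so $h_L := \max_{u < i} ht C_u = r$ and the even count is $r/2$; the odd count equals $\lceil ht C_{i-1}/2 \rceil$ where $ht C_{i-1} \in \{r-1, r\}$, which is also $r/2$, so the two cancel. In case (iii) with $i \leq t$, I establish $h_R := \max_{u > i} ht C_u = r - 1$: the rightmost column attaining $h_R$ is right-extremal, so the right boundary condition excludes $h_R$ from being a positive even integer less than $r$; combined with $h_R \geq ht C_{i+1} \geq r-2$ this forces $h_R = r-1$, and then $\lceil h_R/2 \rceil - \lfloor ht C_{i+1}/2 \rfloor = 1$ cancels the $-\mathbf{1}[i \leq t]$ coming from $h$. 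The case $i = t+1$ is immediate since $h$ carries no $c_{t+1}$ term and no columns lie to the right of $C_i$.

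The hard part will be the combinatorial bookkeeping in cases (i) and (iv), where completeness (through Lemma \ref{2.3.3}) is essential to exclude ``height gaps'' that would otherwise obstruct the cancellation; the remaining cases reduce to parity counting controlled by the fact that $h_L = h_R + 1 = r$.
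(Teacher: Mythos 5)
Your proof is correct, but it takes a different route from the paper's own argument for this lemma. The paper proves it by induction on the rows of an \emph{arbitrary} tableau: setting $h^s=h+\sum_{i<s}f_{R_i}$, it shows that the coefficient of $m^{u_i}$ in $h^s$ equals $(-1)^{s-1}c_{\theta_s(u_i)}$ for \emph{every} column of height $\geq s$, with the convention that this is zero precisely for the strongly extremal column of the truncated tableau $\mathscr T_s$; the lemma is the top case $s=ht\,\mathscr T$. You instead reduce to a complete representative (legitimate, since $f_{\mathscr T}$ is a class function by Lemma \ref{4.5} and the $x$-coordinate of the strongly extremal column is a class invariant by \ref{2.3.4} and \ref{2.4}), use duality to fix the parity of the height, and then carry out a direct count of the coefficient of each $c_j$ in the coefficient of $m^i$, exploiting well-numbering together with the height-function constraints of Lemmas \ref{2.3.2} and \ref{2.3.3} and the boundary conditions. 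I checked your four-case decomposition and each cancellation; they are right. Your argument is in fact much closer in spirit to the paper's ``second proof'' sketched in \ref{5.5} (which also restricts to complete tableaux and reads the coefficients off the row-pair expressions of \ref{5.2}) than to the official induction. What the paper's induction buys is more information: it computes the coefficient of $m^{u_i}$ for every column of maximal height, not just the vanishing one, and this extra output is reused in Lemma \ref{5.5}. What your count buys is transparency about exactly where completeness and the boundary conditions force the cancellations. Two cosmetic points: the reduction to $r$ even is cleaner if phrased as ``replace $\mathscr T$ by $\mathscr T^*$, whose height has opposite parity and whose strongly extremal column is $C_{t+2-i}$'' rather than as the $*$-operation producing both parities inside one class; and in case (iii) the possibility $h_R=0$ (which your boundary-condition argument does not literally exclude when $r=2$) is ruled out at once by Lemma \ref{2.3.2}(ii), since $ht\,C_{t+1}\geq r-1$ and $C_{t+1}$ lies to the right of $C_i$ when $i\leq t$.
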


\begin {proof}

Retain the above conventions and notation in particular taking $s$ odd (resp. even).  Recall the notation of \ref {3.1} and let $\hat{I}_s$ denote the set of blocks of $R_s$.

In the induction step on $s$ below, it is convenient to view $\theta_s$ as a map from $I_s$ to itself by taking $R_{s+2}=R_{s+1}=R_s$ (which is permissible by \ref {2.3.6}) and then to extend $\theta_s$ to $\hat{I}_s$ with the property that $c_{\theta_s(u_i)}=0$, if $B(u_i,s+1)$ is the extremal block of $R_{s+1}$. This last condition means that $i$ is the minimal (resp. maximal) element of $K_s$.

With these conventions we claim that for all $i \in K_s$ the coefficient of $m^{u_i}$ in $h^s$ equals $(-1)^{s-1}c_{\theta_{s}(u_{i})}$. The latter equals zero when $i$ is the minimal (resp. maximal) element of $K_s$.  Then $C_i$ is just the unique strongly extremal column of $\mathscr T_s$ proving the lemma.

 One has $\theta_0(u_i)=\theta_0(i)=i$.  Thus the asserted coefficient of $m^i$ in $h^0$ equals $-c_i:i \in \hat{T}$.  Yet $h^0=h=-\sum_{i=1}^tc_im^i$, whilst $B(t+1,1)$ is an extremal block, so $c_{t+1}=0$ by convention.  Hence the assertion holds for $s=0$.

 Let $M_{s}$ be the $\mathbb Z$ module generated by the $m^j: j \in \hat{T}\setminus K_{s}$.

 Assume $s$ is even.

By the first part of \ref {4.4}$(*)$ we obtain
$$h^{s+1}-h^{s}=
(-1)^{s}\sum_{i \in K_{s+1}}c_{\theta_{s}(u_{i})}(m^{u_i}+m^{u_{i+1}}) \  \text{mod} \ M_{s+1},\eqno{(*)}$$

Then by the induction hypothesis and $(*)$ it follows that for all $i \in K_{s+1}$ the coefficient of $m^{u_i}$ in $h^{s+1}$ is $(-1)^{s}c_{\theta_{s}(u_{i-1})}$.  Yet since $s+1$ is odd and $C_{u_{i-1}}$ is the left neighbour to $C_{u_i}$ at level $s$ it follows that $\varphi_s(u_i)=u_{i-1}$ and so $\theta_{s}(u_{i-1})=\theta_{s+1}(u_i)$. Through duality or a similar argument (using the second part of \ref {4.4}$(*)$ and replacing $u_{i+1}$ by $u_{i-1}$ in the last step) gives the assertion for  $s$ odd.  This completes the induction.

\end {proof}

\subsection{}\label{5.5}

 Let $\mathscr T$ be a tableau of height $r$.

We shall now give a further (perhaps more natural) proof of Lemma \ref {5.4} using the fact that by \ref {2.3.4} it is enough to establish it for $\mathscr T$ complete and that a complete tableau is well-numbered.

 If $C_j$ is a column of $\mathscr T$ of height $r$ which is not the strongly extremal column, then the proof of Lemma \ref {5.4} shows that the coefficient of $m^j$ in $f_\mathscr T$ takes the form $\pm c_i$ for some $i \in T$ which moreover can be determined from $\mathscr T$.  It is not hard to show that this fact is also a consequence of Lemma \ref {5.4}.  Again if we further assume that $\mathscr T$ is complete, then these columns are amongst those which a quasi-extremal. We shall further show that for $\mathscr T$ \textit{complete}, a column  $C_j$ is a quasi-extremal if and only if the coefficient of $m^j$ in $f_\mathscr T$ takes the form $\pm c_i$ for some $i \in T$.  This can fail if $\mathscr T$ is not complete.  Of course for an arbitrary tableau $\mathscr T$ we can define its quasi-extremal columns by this property.  However they then become difficult to identify.

Recall the notion of single box linkage \ref {3.3}.
% View the $c_i:i \in T$ as indeterminates.

\begin {lemma} Let $\mathscr T$ be a complete tableau of height $u$ and $C_j$ its unique strongly extremal column.
 The coefficient of $m_k$ in $f_\mathscr T$ equals $\sn(k-j) c_i$, for some $i \in \{1,2,\ldots,t\}$ if and only $C_k$ is a quasi-extreme column of $\mathscr T$.

\end {lemma}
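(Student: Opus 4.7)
The plan is to proceed case-by-case according to the height of $C_k$ and its position relative to $C_j$, leveraging the inductive formula for coefficients already established in the proof of Lemma \ref{5.4} together with the structural constraints on complete tableaux from Lemmas \ref{2.3.2} and \ref{2.3.3}. First, for $k = j$ Lemma \ref{5.4} dispatches the claim ($C_j$ is strongly extremal, not quasi-extremal, and the coefficient vanishes). For $k \neq j$ I would extract from the proof of Lemma \ref{5.4} the formula
$$\mathrm{coef}_{m^{u_i}}(h^s) = (-1)^{s-1}\, c_{\theta_s(u_i)}, \qquad i \in K_s,$$
together with its extension by row-duplication, which makes $c_{\theta_s(u_i)} = 0$ exactly when $C_{u_i}$ is strongly extremal in the truncation $\mathscr T_s$.

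Second, for the ``top'' case $ht C_k = u = ht \mathscr T$ with $k \neq j$, the formula applied at $s = u$ (with one further duplication step to absorb $f_{R_u}$) gives coefficient $(-1)^{u-1}c_{\theta_u(k)}$, a single nonzero $\pm c_i$. The sign $(-1)^{u-1}$ matches $\sn(k-j)$ because $j$ is leftmost of height $u$ when $u$ is odd and rightmost when $u$ is even. Any such $C_k$ is automatically left (resp. right) quasi-extremal since $u$ is the maximal height, as required.

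Third, for the ``side'' case $ht C_k = u-1$ with $C_k$ quasi-extremal, the key observation is that by the quasi-extremal condition, every column of height $\geq u$ lies strictly on one fixed side of $C_k$ opposite to the side witnessing quasi-extremality. Hence every long difference $r^{u_i}-r^{u_{i\pm 1}}$ appearing in $f_{R_u}$ involves only indices lying strictly on that one side of $k$, so no such difference spans $m^k$, and $f_{R_u}$ contributes $0$ to the coefficient. Thus the coefficient of $m^k$ in $f_\mathscr T$ equals its value in $h^u$, which by the formula at $s = u-1$ is a single $\pm c_i$, with the sign matched to $\sn(k-j)$ by the side constraint imposed by quasi-extremality and the parity rule in \ref{3.3.1}.

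Finally, for the converse direction one must show that whenever $C_k$ is neither strongly nor quasi-extremal (and $k \neq j$) the coefficient is not of the form $\pm c_i$. By Lemma \ref{2.3.2} such a $C_k$ has height $\geq u-2$, and the failure of quasi-extremality means there is a taller column on the ``wrong'' side of $C_k$. I would pinpoint a row $s > s_k$ in which the corresponding long difference $r^{u_{i_0}}-r^{u_{i_0+1}}$ in $f_{R_s}$ has both endpoints straddling $k$, and therefore contributes a term $\pm 2\,c_?$ to the coefficient of $m^k$; combined with the $\pm c_?$ coming from $h^{s_k+1}$ through the formula, and possibly further $\pm c_?$ or $\pm 2 c_?$ contributions from other spanning rows, the total cannot collapse to a single $\pm c_i$ because the $c_?$-indices produced by well-numbering at different levels are distinct elements of $T$ and the $c_i$ are $\mathbb Z$-independent indeterminates. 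The main obstacle I anticipate is the bookkeeping in this last step: a non-quasi-extremal $C_k$ can receive spanning contributions from several distinct $f_{R_s}$, and ruling out accidental cancellations requires using Lemma \ref{2.3.3} repeatedly to localize which indices can appear and verify that the resulting expression has at least two linearly independent $c_i$-components.
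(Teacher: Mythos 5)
Your overall route --- reading the coefficient of $m^k$ off the row decomposition of $f_{\mathscr T}$ together with the inductive coefficient formula inside the proof of Lemma \ref{5.4}, and splitting on whether $ht\, C_k$ is $u$, $u-1$ or smaller --- is essentially the paper's, and your ``if'' direction is sound. Your treatment of the height-$(u-1)$ case is arguably cleaner than the paper's: you observe directly that for a quasi-extremal $C_k$ of height $u-1$ all columns of height $u$ lie on one fixed side of $C_k$ (by \ref{3.3.1}), so $f_{R_u}$ contains no $m^k$ at all, whereas the paper first runs the single-block-linkage argument ($f_{\mathscr T\cup B}-f_{\mathscr T}=c_i(r^k-r^j)$ combined with Lemma \ref{5.4} applied to $\mathscr T\cup B$) and later folds the height-$(u-1)$ case into the height-$u$ case by half-domino adjunction and duality. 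Two caveats. First, the paper's detour is not gratuitous: the identity $f_{\mathscr T\cup B}-f_{\mathscr T}=c_i(r^k-r^j)$, together with the fact that its opposite never occurs, is exactly the promised \ref{5.1}$(*)$ and is re-used in \ref{6.2} to make $\mathscr G_H$ an evaluation graph; if you bypass it here you must establish it elsewhere. Second, in the side case ``the formula at $s=u-1$'' lands you at $h^{u-1}$, one row short of what you need, since $f_{R_{u-1}}$ does involve $m^k$ when $ht\, C_k=u-1$; you must invoke the row-duplication device you set up (truncate at $R_{u-1}$, duplicate, run one more induction step) and also check that $C_k$ is not the strongly extremal column of the truncation $\mathscr T_{u-1}$ (it is not, since by Lemma \ref{2.3.2} that column is $C_{t+1}$, resp.\ $C_1$), else the formula returns $0$ rather than $\pm c_i$.

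The genuine gap is in the converse. Your non-cancellation argument --- a straddling row contributes $\pm 2c_{?}$, and indices at distinct levels are distinct --- does not exclude the one dangerous pattern: a single straddling contribution $2c_a$ with $a=k$, which combines with the $-c_k$ coming from $h$ to give exactly $+c_k=\sn(k-j)c_k$, a perfectly legal value of $\sn(k-j)c_i$. This is not hypothetical: the Remark in \ref{5.2} computes $f_{\mathscr T}$ for $\mathscr T=(3,1,2,3)$, and there the coefficient of $m^2$ collapses to $c_1$ by precisely such a cancellation even though $C_2$ has height $\le u-2$; so the conclusion of the lemma genuinely fails once completeness is dropped, and any correct proof of the converse must use completeness at exactly this point. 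The fix exists but has to be said: in a complete, hence well-numbered, tableau a difference $r^{a}-r^{b}$ straddling $k$ in \ref{5.2}$(*)$--$(***)$ carries a coefficient $c_{a'}$ or $c_{a'}-c_{b'}$ whose indices are read off columns lying strictly to the left, resp.\ strictly to the right, of $C_k$, hence are distinct from $k$; then Lemma \ref{5.2} (pairwise distinctness of indices across levels) guarantees that the resulting $\pm 2c_{a'}$ cannot be absorbed by $h$'s $-c_k$ nor by the other levels, so the coefficient of $m^k$ has a $c$-component of absolute value $2$ and cannot equal $\sn(k-j)c_i$. Until that index computation is made explicit, the last step of your argument is open.
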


\begin {proof} Assume $u$ is odd (resp. even). Recall that single block linkage adds a block to a quasi-extreme column $C_k$ of $\mathscr T$ of height $u-1$ to give a new tableau $\mathscr T \cup B$ with strongly extremal column $C_k$. Moreover (\ref {3.3.1}) one has $k<j$ (resp. $k>j$).  From \ref {4.4}$(**)$ we obtain
$$f_{\mathscr T \cup B}-f_\mathscr T=c_i(r^k-r^j), \eqno{(*)}$$
where $i$ may be read off from $\mathscr T$.  Observe that $(*)$ is just what we promised to prove in \ref {5.1} and that we do not obtain the opposite of \ref {5.1} $(*)$.  This is a crucial fact and was not a priori obvious.

Observe that $r^k-r^j= \sn(j-k)(m^k+m^j)$.  By Lemma \ref {5.4} (of which we shall also give an independent proof below) the coefficient of $m_k$ in $f_{\mathscr T \cup B}$ is zero.  Then by $(*)$ it follows that the coefficient of $m_k$ in $T$ must be $\sn (k-j)$.  This proves ``if'' of the lemma.

The formula for $f_\mathscr T$ is given by adding $h=-\sum_{i=1}^tc_im^i$, to the sum $S$ of all the expressions in $(*),(**), (***)$ of \ref {5.2}. By Lemma \ref {5.3} the only coincidences in the coefficients of $m^i: i \in \hat{T}$ occur between the formula for $h$ and one of the said expressions.  Since we are treating the $c_i:i \in T$  as indeterminates  the latter can give rise to no cancellations amongst themselves. To avoid cancellations we may also restrict the $c_i:i \in T$ to be strictly positive integers which are strictly decreasing and in ``general position''. Then the $c_j$ and the $c_{j_{k_\ell}},c_{j_{k_\ell}}-c_{j_n}$ are strictly positive.

Finally observe that
$$r^{j_{k_\ell}}-r^{j_{k_{\ell+1}}}=
m^{j_{k_\ell}}+2m^{j_{k_\ell}+1}+\ldots+2m^{j_{k_\ell+1}-1}+m^{j_{k_{\ell+1}}}.$$

 Thus to obtain a coefficient of $\pm c_i$ or $0$ of $m_j$ in $f_\mathscr T$ we must have a coefficient of $2c_i,c_i,0$ of $m_j$ in $S$.  This means that $C_j$ cannot lie between columns of strictly greater height.  By Lemma \ref {2.3.2} this implies that $ht C_j \geq u-1$.

 Suppose $ht C_j=u$ and that $u$ is odd.

 By \ref {5.2}$(***)$, the coefficient of $m^{i_\ell}$ equals $c_{i_\ell}$ if $\ell=1$ or if $\ell=  m$. The coefficient of $m^{i_\ell}$ equals $2c_\ell$ if $1<\ell<m$. Moreover $i_m=t+1$, by Lemma \ref {2.3.2}. Moreover because indices on the $r^j$ are pairwise distinct with respect to the top row and pairs of subsequent rows, this remains true in the overall sum $S$. Thus the coefficient of $m^{i_\ell}$ in $f_\mathscr T$ is $0$ if $\ell=1$ (which defines the strongly extremal column) and the coefficient of $m^{i_\ell}$ equals  $c_{i_\ell}$ if $1<\ell \leq m$ (which defines the quasi-extremal columns of height $u-1$).

 The case $ht C_j=u$ and $u$ even, results by duality.

One may check (for $\mathscr T$ complete) that this gives a second proof of the assertion in the induction hypothesis of Lemma \ref {5.4} and in particular of Lemma {5.4}.

Suppose $ht C_j=u-1$ and $u$ odd.  By the above we need only consider the columns of height $u-1$ which do not lie between columns of height $u$ and hence lie to the left the strongly extremal column $C_{i_1}$.  It is then enough to consider the case $1<i_1$.  In this case we adjoin a half-domino (see \ref {2.3.4}) to the strongly extremal column $C_{i_1}$ and complete to a tableau $\hat{T}$ of height $u+1$.  By Lemma \ref {2.3.4} the columns of height $u-1$ to the left of $C_{i_1}$ become of height $u+1$.  Then the assertion for this case reduces by duality to the first case.  The case $u$ even results by duality (or by a similar argument).

\end {proof}

\subsection{}\label{5.6}

We now prove a version of Lemma \ref {4.5} for complete tableaux, by a method which is independent of Lemma \ref {2.3.7}.
%This will give a second more intrinsic proof of Lemma \ref {2.3.1}.

 \begin {lemma} Let $\mathscr T$ and $\mathscr T'$ be complete tableau of the same height $u$.  If $f_\mathscr T=f_{\mathscr T'}$, then $\mathscr T=\mathscr T'$.
 \end {lemma}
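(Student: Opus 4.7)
The plan is to induct on the height $u$, using Lemmas \ref{5.2}, \ref{5.4}, and \ref{5.5} to reconstruct $\mathscr T$ row-by-row from $f_\mathscr T$ without appealing to the uniqueness of reduced complete representatives in an equivalence class (Lemma \ref{2.3.7}). The base case $u=0$ is immediate, since both tableaux are then empty.

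For the inductive step, I would first invoke Lemma \ref{5.4}: the strongly extremal column of $\mathscr T$ is characterized as the unique column $C_j$ whose coefficient of $m^j$ in $f_\mathscr T$ vanishes. Hence $\mathscr T$ and $\mathscr T'$ share the same strongly extremal column $C_j$. By Lemma \ref{5.5}, the quasi-extremal columns together with their labels $\pm c_i$ are likewise read off directly from $f_\mathscr T$, and so also coincide for the two tableaux.

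Next I would exploit the slab decomposition of Lemma \ref{5.2}, writing $f_\mathscr T - h = f_{P_u} + f_{P_{u-2}} + \cdots$ (plus an extra $f_{R_u}$ when $u$ is odd), with the $c$-indices appearing in distinct slabs pairwise disjoint. The top slab is singled out as the one whose $c$-indices match the entries forced in the topmost row(s) by well-numberedness (Lemma \ref{3.1}). Combined with the already-identified strongly extremal and quasi-extremal columns, the explicit formulas of Lemma \ref{5.2} then pin down the positions of all columns of heights $u$ and $u-1$; consequently the top one or two rows of $\mathscr T$ and $\mathscr T'$ agree.

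Finally, I would peel off this top slab. Deletion of the top row (if $u$ is odd) or the top two rows (if $u$ is even) decomposes the remainder into a disjoint union of smaller complete sub-tableaux, one in each interval between consecutive columns of height $u$; the boundary conditions survive in each block (cf.\ Lemma \ref{2.3.2}), and the residual $f_\mathscr T - h - f_{P_u}$ splits into block-wise $f$-functions with disjoint $c$- and $m$-supports. Applying the inductive hypothesis to each pair of sub-tableaux then yields $\mathscr T = \mathscr T'$. The main obstacle lies in the bookkeeping at this last step: one must verify that, restricted to each block between consecutive height-$u$ columns, the residual really is the $f$-function of the corresponding sub-tableau relative to an appropriate block-local driving function. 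This follows from the explicit formulas of Sections \ref{4} and \ref{5}, but requires care in tracking how $h = -\sum c_i m^i$ distributes across the blocks and how the signs of the differences $r^k - r^\ell$ behave at block boundaries.
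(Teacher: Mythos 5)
Your opening moves are sound and match the paper's: you identify the strongly extremal column via Lemma \ref{5.4} and you invoke the slab decomposition of Lemma \ref{5.2}. But you then only use the disjointness of the $c$-indices to isolate the \emph{top} slab, and you push the rest into an induction whose crucial step you yourself flag as the ``main obstacle'' and do not carry out. That step is a genuine gap, and as formulated it is problematic: the residual $f_\mathscr T-h-f_{P_u}$ does \emph{not} split into pieces with disjoint $m$-supports over the intervals between consecutive height-$u$ columns, because each lower slab $f_{P_{2i}}$ contains terms $r^{j_{k_\ell}}-r^{j_{k_{\ell+1}}}$ whose endpoints are the height-$u$ columns themselves, so adjacent ``blocks'' share those variables; moreover the sub-diagrams you extract are not tableaux in the sense of \ref{3.1} (the numbering rules there are global, starting from $b(i,1)=i$ in row one), their boundary conditions as standalone diagrams of smaller order are not checked, and no ``block-local driving function'' is defined anywhere in the paper. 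None of this bookkeeping is a formality one can wave at Sections \ref{4} and \ref{5}.

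The larger point is that the induction is unnecessary. Lemma \ref{5.2} asserts that the index sets of \emph{all} the slabs $f_{P_{2i}}$, $i=1,\dots,v$ (and $f_{R_{2v+1}}$ in the odd case), are pairwise disjoint, so from $f_\mathscr T=f_{\mathscr T'}$ one gets $f_{P_{2i}}=f_{P'_{2i}}$ for every $i$ simultaneously, and the explicit formulas $(*),(**)$ of \ref{5.2} then let you read off the columns of heights $2i,2i-1$ for every $i$ at once. That is the paper's entire proof. If you do want an inductive argument, the correct peeling is vertical, not horizontal: delete the top two rows of both tableaux to get the truncations $\mathscr T_{u-2},\mathscr T'_{u-2}$ of \ref{5.4}, which remain complete (adjoining a domino at level $\leq u-2$ involves only neighbour data unchanged by truncation) and satisfy $f_{\mathscr T_{u-2}}=h+\sum_{i\leq v-1}f_{P_{2i}}$, so equality of the top slabs plus the inductive hypothesis finishes the argument without ever splitting the diagram into horizontal blocks.
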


 \begin {proof}  Suppose $u=2v$ is even. By $(*),(**)$ of \ref {5.2} we can read off the columns of height $2i,2i-1$ of $\mathscr T$ from $f_{P_{2i}}$. Yet the hypothesis and Lemma \ref {5.2} imply that $f_{P_{2i}}
=f_{P_{2i}'}$, for all $i=1,2,\ldots,v$. Hence the assertion.  The case where $u$ is odd obtain by duality or by a similar argument.
 \end {proof}

 \subsection{}\label{5.7}

 Define an order relation on the set $f_\mathscr T : \mathscr T \in H^{t+1}(\textbf{c})$, by $f_\mathscr T \geq f_\mathscr T'$ if $f_\mathscr T-f_\mathscr T' \in \sum_{i=1}^t\mathbb N(r^i-r^{i+1})$.

 %It is clear that $z$ is the unique minimal element of $\{f_\mathscr T:\mathscr T \in H^{n+1}(\textbf{c})\}$.  Let us define $h^j$ for all $j =1,2,\ldots n+1$ inductively by $h^{n+1}=z, \quad h^j= c_j(r^j-r^{j+1})+h^{j+1}$.  These elements form the vertices of the pointed chain \cite [6.3]{J2}, in $H^{n+1}$ and belong to $H^{n+1}(\textbf{c})$ for all linear orders $\textbf{c}$.  One has $h^j \in H^{n+1}_j$.

\begin {lemma}  $h$ (resp. $h^*$)  is the unique minimal (resp. maximal) element of $\{f_\mathscr T:\mathscr T \in H^{t+1}(\textbf{c})\}$.
\end {lemma}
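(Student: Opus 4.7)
The plan is to identify $h$ with the function attached to the empty tableau and deduce minimality directly from the positivity property built into the definition of $H^{t+1}(\textbf{c})$; the maximality of $h^*$ will then follow at once by the duality identity $(H^{t+1}(\textbf{c}))^* = H^{t+1}(\textbf{c}^*)$ of \ref{5.3}$(*)$.

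First I would verify that the empty tableau $\mathscr T_\emptyset$ belongs to $H^{t+1}(\textbf{c})$ for every sector $\textbf{c}$. It is vacuously complete, since adjoining a left even or right odd domino to the empty diagram would require a neighbouring column of height at least two, and there are none. Its partial order $P(\mathscr T_\emptyset)$ is empty, so it lifts trivially to any linear order $L(T)$. Finally, the sum over rows in \ref{4.4}$(**)$ is empty here, giving $f_{\mathscr T_\emptyset}=h$; in particular $h \in \{f_\mathscr T : \mathscr T \in H^{t+1}(\textbf{c})\}$.

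Next, for any $\mathscr T \in H^{t+1}(\textbf{c})$, the fact that $P(\mathscr T)$ lifts to $L(T)$ combined with part (i) of Lemma \ref{5.3} places $\textbf{c}$ in a sector in which $f_\mathscr T$ satisfies the positivity condition, so that $f_\mathscr T-h\in \sum_{i=1}^t \mathbb N(r^i-r^{i+1})$. This is precisely $f_\mathscr T\geq h$, which makes $h$ the unique minimum of the set, and in particular its unique minimal element.

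Finally I would obtain the statement for $h^*$ by duality. From $(r^i)^*=r^{t+2-i}$ of \ref{4.3} one computes $(r^i-r^{i+1})^*=-(r^{t+1-i}-r^{t+2-i})$, so the involution $*$ reverses the partial order $\geq$. Combined with the identity $(H^{t+1}(\textbf{c}^*))^* = H^{t+1}(\textbf{c})$, applying $*$ to the minimality of $h$ inside $H^{t+1}(\textbf{c}^*)$ converts it into maximality of $h^*$ inside $H^{t+1}(\textbf{c})$. There is no real obstacle: the inequality $f_\mathscr T\geq h$ is essentially a restatement of the positivity of $f_\mathscr T$, which is built into membership in $H^{t+1}(\textbf{c})$; the only mildly delicate point is the check that $\mathscr T_\emptyset$ satisfies the definitional requirements of $H^{t+1}(\textbf{c})$ for \textit{every} choice of sector, which is the first step of the argument.
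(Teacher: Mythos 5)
Your proposal is correct and follows essentially the same route as the paper, which simply declares the result an immediate consequence of the definition of $H^{t+1}(\textbf{c})$ (positivity of $f_\mathscr T$ giving $f_\mathscr T - h \in \sum_{i=1}^t\mathbb N(r^i-r^{i+1})$) together with the duality identity \ref{5.3}$(*)$. Your additional check that the empty tableau lies in $H^{t+1}(\textbf{c})$ for every sector is a reasonable piece of due diligence that the paper leaves implicit (it also follows from Lemma \ref{6.3}).
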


\begin {proof}  This is an immediate consequence of the definition of $H^{t+1}(\textbf{c})$ of $(*)$ of \ref {5.3}.
%of duality \cite [4.3]{J2}.  This is an involution $*$ on $\{f_\mathscr T:\mathscr T \in H^{n+1}\}$, defined by $c_i^*=c_{n+1-i}, (r^i)^*=r^{n+2-i}$ and linearity. It takes $h^{n+1}$ to $h^1$.  Moreover
%$$(H^{n+1}(\textbf{c}))^* =H^{n+1}(\textbf{c}^*), \eqno {(*)}$$
%by construction (see Remark).  Hence the assertion.
\end {proof}

\textbf{Remark}. The condition that $f \in H^{t+1}$ belongs to $H^{t+1}(\textbf{c})$ is that it satisfies certain positivity conditions with respect to $\textbf{c}$ (which implies that $h$ is the unique minimal element of $f_\mathscr T : \mathscr T \in H^{n+1}(\textbf{c})$) \textit{and} that $f^*$ satisfies corresponding positivity conditions with respect to $\textbf{c}^*$.  The latter were imposed for no particular reason except that they (mysteriously) led to the required invariance property of $f_\mathscr T : \mathscr T \in H^{t+1}(\textbf{c})$ implied by it forming an $S$-graph.  However imposing these latter conditions is exactly what is needed to obtain that $h^*$ is the unique maximal element of $H^{t+1}(\textbf{c})$ above.  Then the truth of the above lemma gives a more cogent reason for adopting these additional conditions.  Eventually we would like to think of duality as a reflection corresponding to the simple root $\alpha$ with respect to which the Kashiwara functions $r^i:i \in \hat{T}$ as defined (cf \ref {4.2}).

 \section{The Graph of Links of $H^{t+1}$}\label{6}

\subsection{}\label{6.1}

Recall (\ref {3.3.3}) the (undirected) graph $\mathscr G_{H^{t+1}}$ (or simply, $\mathscr G_H$) of links of $H^{t+1}$. Let $\mathscr G_{H(\textbf{c})}$ be the subgraph of $\mathscr G_H$ whose vertices are $H(\textbf{c})$ and edges are links between the elements of $H(\textbf{c})$.  Here we establish some properties of $\mathscr G_H$ and of $\mathscr G_{H(\textbf{c})}$.

 More generally let  $\mathscr G$ be a graph on which we shall want to impose a number of properties, namely: $P_1-P_7$ below. In this the set of vertices (resp. edges) of $\mathscr G$ will be denoted by $V( \mathscr G)$ (resp. $E(\mathscr G)$).  An edge joining the vertices $v,v' \in V( \mathscr G)$ will be denoted by $(v,v')$.

\

$(P_1)$.  There is map $i \mapsto i_v$ of $V(\mathscr G)$ into $\hat{T}$.  One has $i_v\neq i_{v'}$ if $(v,v') \in E(\mathscr G)$.

\

$(P_2)$.  There is map $i \mapsto i_{(v,v')}$ of $E(\mathscr G)$ into $T$.  One has $i_{(v,v')}\neq i_{(v,v'')}$, if $v' \neq v''$.

\

In \ref {3.3.3}, we showed these to hold for $\mathscr G_H$.

\subsection{Evaluation}\label{6.2}

From now on all graphs will be assume to satisfy $P_1,P_2$.  We write $c_{i_{(v,v')}}$ simply as $c_{v,v'}$ or as $c_{v',v}$.

%Let $\hat {H}^{t+1}$ (or simply, $\hat{H}$)

A graph $\mathscr G$ will be said to be an evaluation graph if

$(P_3)$.  There exists a map $f:v \mapsto f_v \in \hat{H}$ such that
$$f_v-f_{v'}=c_{v,v'}(r^{i_v}-r^{i_{v'}}).$$

\

%Even we do not take the target set to be $H$ this condition is non-trivial for graphs which are not trees.
%
We have shown (see \ref {5.5}$(*)$) that $\mathscr G_H$ satisfies $P_3$.

\subsection{The Pointed Chain}\label{6.3}

A pointed chain $\mathscr C$ in  $\mathscr G$ is a set $V(\mathscr  C) =\{v_{t+1},v_t,\ldots,v_1\}$ of vertices of $\mathscr G$ with $i_{v_i}=i$ for all $i \in \hat{T}$ and $i_{(v_{i+1},v_i)}=i$, for all $i \in T$.

\

$(P_4)$.  There is a unique pointed chain $\mathscr C$ in $\mathscr G$.

\

\begin {lemma}  $\mathscr G_H$ satisfies $P_4$. Moreover for every linear order $\textbf{c}$ one has $\{f_v: v \in V(\mathscr C)\} \subset H(\textbf{c})$ and so in particular $ H(\textbf{c})$ satisfies $P_4$.

\end {lemma}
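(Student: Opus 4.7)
The plan is to exhibit the pointed chain explicitly and then deduce both its uniqueness and the positivity assertion from properties of the evaluation $f$. For each $i\in\hat T$ I would let $\mathscr T_i$ be the height-one tableau whose columns of height one are exactly $C_i,C_{i+1},\ldots,C_{t+1}$, and set $v_i:=[\mathscr T_i]$. By \ref{3.1}(ii) the entry of $B(j,1)$ in $\mathscr T_i$ is $j$, and since $ht\, \mathscr T_i=1$ is odd the strongly extremal column is the leftmost column of height one, namely $C_i$; hence $v_i\in H_i$. Since $\mathscr T_{i+1}$ is obtained from $\mathscr T_i$ by deleting $B(i,1)$, conversely $\mathscr T_i$ arises from $\mathscr T_{i+1}$ by a single block linkage at the column $C_i$, which is quasi-extremal of height $0=s'-1$ in $\mathscr T_{i+1}$; Lemma \ref{3.3.2} then gives an edge $(v_{i+1},v_i)$ in $\mathscr G_H$ whose added block has entry $i$, so $i_{(v_{i+1},v_i)}=i$ as required. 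This exhibits the pointed chain $\mathscr C=\{v_{t+1},v_t,\ldots,v_1\}$.

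For uniqueness I would take an arbitrary pointed chain $\{w_{t+1},w_t,\ldots,w_1\}$ in $\mathscr G_H$ and argue via evaluations. By $P_3$ one has $f_{w_i}-f_{w_{i+1}}=c_i(r^i-r^{i+1})=c_i(m^i+m^{i+1})$ (recall $M_i=0$ from \ref{4.2}), and by Lemma \ref{5.4} the coefficient of $m^j$ in $f_{w_j}$ vanishes for every $j\in\hat T$. Telescoping on the coefficient of $m^k$ gives $[m^k]f_{w_{t+1}}=-c_k$ for $k\in T$ and $[m^{t+1}]f_{w_{t+1}}=0$, whence $f_{w_{t+1}}=-\sum_{k=1}^{t}c_km^k=h$. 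The empty tableau is deplete and evaluates to $h$, and by Lemma \ref{4.7} the map $\mathscr T\mapsto f_\mathscr T$ is injective on deplete tableaux and hence on equivalence classes; so $w_{t+1}=v_{t+1}$. Running the recursion downward determines $f_{w_i}=h+\sum_{j=i}^{t}c_j(m^j+m^{j+1})$, and the same injectivity forces $w_i=v_i$ for every $i$.

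Finally I would verify $\{f_{v_i}\}\subset H(\textbf{c})$ for every $\textbf{c}$. The partial order $P(\mathscr T_i)$ of \ref{3.2.1} is empty: its relations come from pairs of neighbours at level one separated by columns of height zero, but in $\mathscr T_i$ the nonempty columns form the consecutive block $\{C_i,C_{i+1},\ldots,C_{t+1}\}$, so any two such neighbours are literally adjacent, the integer $s$ appearing in \ref{3.2.1}$(**)$ is always $2$, and the quantifier over $v\in\{3,\ldots,s\}$ is vacuous. Thus $P(\mathscr T_i)$ lifts trivially to every linear order, so $v_i\in H(\textbf{c})$; the pointed chain lies in $\mathscr G_{H(\textbf{c})}$, and its uniqueness in $\mathscr G_H$ a fortiori gives uniqueness in the subgraph. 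The main obstacle is the uniqueness argument: naively one might try to identify $v_{t+1}$ directly within $H_{t+1}$, but since $|H_{t+1}|$ can be as large as the Catalan number $C(t)$ a direct combinatorial argument is unwieldy. The evaluation-based approach above instead reduces uniqueness of the entire chain to the single algebraic identity $f_{w_{t+1}}=h$, after which the class-separation Lemma \ref{4.7} closes the argument.
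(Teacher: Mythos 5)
Your proof is correct, and the existence construction and the verification that $P(\mathscr T_i)=\emptyset$ (hence $\{f_{v_i}\}\subset H(\textbf{c})$ for every sector) coincide with the paper's. Where you genuinely diverge is the uniqueness step. The paper also reduces to showing $f_{w_{t+1}}=h$ (and then invokes $P_2$ to propagate down the chain), but it argues combinatorially: assuming $f_{w_{t+1}}\neq h$, it takes the minimal-height complete tableau representing $f_{w_{t+1}}$, uses Lemma \ref{5.4} to see that $C_{t+1}$ is strongly extremal of even height with top entry $t$, and then shows the edge relations force the top row to fill up to cardinality $t+1$, contradicting reducedness. You instead telescope the evaluation identities $f_{w_i}-f_{w_{i+1}}=c_i(m^i+m^{i+1})$ furnished by $P_3$ (already established for $\mathscr G_H$ in \ref{6.2} via \ref{5.5}), kill the coefficient of $m^k$ in $f_{w_k}$ by Lemma \ref{5.4}, and read off $f_{w_{t+1}}=h$ coefficient by coefficient; class separation (Lemmas \ref{4.5}, \ref{4.7}) then identifies every $w_i$ with $v_i$. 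Both routes rest on Lemma \ref{5.4}, but yours replaces the delicate tableau manipulation by a short linear-algebra computation in $\hat H$, at the price of invoking the injectivity of $\mathscr T\mapsto f_{\mathscr T}$, which the paper's argument does not need. One cosmetic remark: you take $\mathscr T_{t+1}$ to be the single-block diagram $(0,\ldots,0,1)$ rather than the empty diagram; these are equivalent under half-domino adjunction, and your choice has the mild advantage that the single block linkage producing the edge labelled $t$ is then literal rather than requiring a change of representative.
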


 \begin {proof}

Existence.  Let $\mathscr T_{t+1}$ be the empty diagram (or tableau).  For all $i \in T$, let $\mathscr T_i$ be the tableau of height $1$ with $C_j:j=i,i+1,\ldots,t+1$ being its non-empty columns (of height $1$). Set $h_i=f_{\mathscr T_i}:i \in \hat{T}$. One has $h_i \in H_i$.  Let $v_i $ be the corresponding vertex in $V(\mathscr G_H)$.  One checks that $v_{t+1},v_t,\ldots,v_1$ is a pointed chain in $\mathscr G_H$.

Uniqueness.    Let $f_{t+1},f_t,\ldots,f_1 \in H$ be the functions corresponding to the successive elements $v_{t+1},v_t,\ldots,v_1$ in a pointed chain. In particular $f_{t+1} \in H_{t+1}$. By $P_2$ it is enough to show that $f_{t+1} =h$. Assume this is not true.   Let $\mathscr T_i:i \in \hat{T}$ be the unique complete tableau of minimal height corresponding to $f_{t+1}$. By Lemma \ref {5.4} it follows that $C_{t+1}$ is the strongly extremal column of $\mathscr T_{t+1}$ and hence its height $s$ is even.  Since $\mathscr T_{t+1}$ is complete, the top entry of this column is $c_t$. Since $f_{t+1}-f_t=c_t(r^{t+1}-r^t)$, by the assumption on the chain, it follows that the second to last column $C_t$ of $\mathscr T_{t+1}$ also has height $s$  and that $\mathscr T_t$ is obtained from $\mathscr T_{t+1}$ by deleting the block from $C_{t+1}$ at level $s$.  Applying this argument successively to $\mathscr T_i:i=t+1,t,t-1,\ldots,1$, it follows that the top row at level $s$ of $\mathscr T_{t+1}$ has cardinality $t+1$.  This contradicts the assumption that $f_{t+1} \neq h$ and that $\mathscr T_{t+1}$ is reduced (or without these hypotheses it simply means that $f_{t+1}=h$).

Since $\mathscr T_t$ has height $\leq 1$, is complete and satisfies $P(\mathscr T_t=\phi$ because there are gaps in the columns.   Hence the last part.

\end {proof}

\subsection{The Marked Vertex}\label{6.4}

We shall say that a graph $\mathscr G$ is pointed if it satisfies $P_4$. In a pointed graph $\mathscr G$ we identify the initial vertex of the pointed chain with $h$ and we denote this vertex by $v_h$. It is called the marked vertex. Then if $P_3,P_5$ also hold we can compute $f_v$ for all $v \in V(\mathscr G)$.

\subsection{Connectedness}\label{6.5}

\

$(P_5)$.  $\mathscr G$ is connected.

\

In view of Lemma \ref {6.3}, it is enough to prove the stronger result which we also need in \ref {8.5}.

\begin {lemma} For every linear order $\textbf{c}$, the subgraph $G_{H(\textbf{c})}$ is connected.
\end {lemma}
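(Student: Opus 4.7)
The plan is to induct on the minimum number of blocks $|\mathscr T|_{\min}$ taken over the two complete reduced representatives of the equivalence class $\mathscr T$, showing that every vertex of $\mathscr G_{H(\textbf c)}$ is joined by a path in this subgraph to the marked vertex $v_h$. By Lemma~\ref{5.7}, $h$ is the unique minimum of the order $\leq$ on $\{f_\mathscr T : \mathscr T \in H(\textbf c)\}$, and by Lemma~\ref{6.3} the corresponding vertex $v_h$ lies in $H(\textbf c)$. The base case $|\mathscr T|_{\min}=0$ corresponds to $\mathscr T=v_h$ and is trivial.

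For the inductive step I take the complete reduced representative $\mathscr T_0$ of $\mathscr T$ of smaller height and form $\mathscr T'$ by removing the top block from the strongly extremal column $C_j$ of $\mathscr T_0$ and then passing to the complete reduced representative of the resulting tableau. Lemma~\ref{3.3.2}, together with Lemmas~\ref{2.3.2}, \ref{2.3.3} and the analysis of quasi-extremal columns in \S\ref{3.3.1}, yields a case analysis on the parity of $\mathrm{ht}\,\mathscr T_0$ and on the position of $C_j$ which shows that the new strongly extremal column differs from $C_j$, so that $(\mathscr T,\mathscr T')$ is a genuine edge of $\mathscr G_H$. The sub-case in which the single-block removal would merely reproduce the other complete reduced representative of $[\mathscr T_0]$ is excluded by the choice of the lower-height representative, combined with the fact that this configuration (viz., $C_j$ is a boundary column and the only column of maximal height of $\mathscr T_0$) forces $[\mathscr T_0]=[v_h]$ via half-domino and equal-adjacent-row equivalences, i.e., the base case. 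One then checks $|\mathscr T'|_{\min}<|\mathscr T|_{\min}$ and the induction proceeds.

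The main obstacle is the verification that $\mathscr T'\in H(\textbf c)$, i.e.\ that $P(\mathscr T')$ lifts to $L(\textbf c)$. By Lemma~\ref{5.3} the sector selecting $\mathscr T_0\in H(\textbf c)$ is cut out by the combined inequalities of $P(\mathscr T_0)$ and $P(\mathscr T_0^*)$; I would show that every inequality appearing in $P(\mathscr T')$ or in $P((\mathscr T')^*)$ is already one of these. The argument proceeds by tracking, for each chain $(C_{i_1},C_{i_2},\ldots,C_{i_s})$ producing a relation of type \S\ref{3.2.1}$(*)$ or $(**)$, how the intermediate columns $C_{i_2},\ldots,C_{i_{s-1}}$ and the entries $b(i_v,j-1)$, $b(i_1,j)$, $b(i_s,j)$ transform under the single-block removal and any subsequent re-completion (cf.\ Lemma~\ref{2.3.4}). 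This delicate combinatorial bookkeeping, whose flavour already appears in the proofs of Lemmas~\ref{5.2} and~\ref{5.5}, is the hard part of the proof.
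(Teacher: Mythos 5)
Your overall strategy---peel the top block off the strongly extremal column of a minimal complete representative and induct on the number of blocks---is the paper's own. The difficulty is that the decisive step, the verification that the resulting class still lies in $H(\textbf{c})$, is exactly what you leave unexecuted: you declare it ``the hard part'' and propose to do it by tracking every chain of \ref{3.2.1} through the block removal \emph{and a subsequent re-completion}. That is a genuine gap, and the way you have framed the step makes it strictly harder than it needs to be. Completion really does change the partial order (Example 1 of \ref{5.3}: $P(2,1,0,2)=\{2<1,2<3\}$ while $P(2,1,2,2)=\{2<1\}$), so an argument routed through re-completion must control the relations that completion creates and destroys, and you give no mechanism for doing so. A second, smaller, flaw: your parenthetical claim that the degenerate sub-case (the strongly extremal column carrying the only block of the top row) forces $[\mathscr T_0]=[v_h]$ is not true in general and is not what is needed; that configuration simply contradicts the minimality of the height of $\mathscr T_0$ in its class, which is your (and the paper's) primary reason for excluding it.

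The observation that closes the gap is that no re-completion occurs. If $\mathscr T$ is the complete representative of minimal height $u$ and $B(j,u)$ is the top block of its strongly extremal column $C_j$, then $\mathscr T\setminus B(j,u)$ is \emph{already complete}: a domino adjunction made newly possible by lowering $C_j$ from height $u$ to $u-1$ would, by the definition in \ref{2.3.1}, require a neighbouring column of height at least $u+1$, which cannot exist. Moreover $P(\mathscr T\setminus B(j,u))\subseteq P(\mathscr T)$ is immediate from \ref{3.2.1}: relations at levels $<u$ involve only memberships in the sets of columns of height $\geq\ell$, $\geq\ell-1$, $=\ell-1$ and entries at those levels, none of which change; and at level $u$ the column $C_j$ is the leftmost (for $u$ odd), resp.\ rightmost (for $u$ even), column of height $u$, so lowering it can only delete the relations in which it was the distinguished endpoint---it cannot merge two columns of height $u$ into a new neighbouring pair, nor itself become an intermediate column of height $u-1$ lying between two columns of height $u$. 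Since $H(\textbf{c})$ is defined (\ref{5.3}) by the condition that the partial order of a complete representative lifts to the given linear order, membership of $\mathscr T\setminus B(j,u)$ follows at once, with no bookkeeping and no separate check on $P((\mathscr T')^{*})$.
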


\begin {proof}  It is enough to show that there is a path from any vertex $v \in V(\mathscr G_{H(\textbf{c})})$ to $v_h$.  Take the unique complete tableau $\mathscr T \in H(\textbf{c})$ of minimal height $u$ representing $v$.

In what follows we take $u$ odd (resp. even).

If $\mathscr T$ is the empty tableau we are done.  Otherwise we prove the assertion by induction on the number of blocks of $\mathscr T$. Let $C_j$ be the unique strongly extremal column of $\mathscr T$.  Then $B(j,u)$ cannot be the only block in $R_u$ for otherwise $\mathscr T$ would not be of minimal height (c.f. \ref {2.3.4}). In particular it is not the extremal block $B(k,u)$ of $R_u$.  (One may wish to recall that $j$ is minimal (resp. maximal) such that $B(j,u)$ is a block of $\mathscr T$, which just the opposite is true of $k$.) Though rule (iii) of \ref {3.1} one has $i:=b(j,u)\in T$.  Let $\mathscr T \setminus B(j,u)$ be the tableau obtained by removing $B(j,u)$.  from $\mathscr T$.  The resulting diagram obviously satisfies the boundary conditions at all levels except $u$ and for level $u$ this also hold because $B(j,u)$ is not the extremal block at level $u$.  It is immediate from definitions (\ref {3.2.1}) that $P(\mathscr T \setminus B(j,u))\subset P(\mathscr T)$.  Hence $\mathscr T \setminus B(j,u)) \in H(\textbf{c})$  by Lemma \ref {5.3}.  Obviously $\mathscr T \setminus B(j,u))$ is complete and therefore cannot represent the same element of $H$ as $\mathscr T$.

However besides this one easily checks that the unique strongly extremal column of $\mathscr T \setminus B(j,u))$ is $C_\ell$ with $j<\ell \leq k$ (resp. $k\leq \ell <j$) and that $f_\mathscr T -f_{\mathscr T \setminus B(j,u)}=c_i(r^j-r^\ell)$.
\end {proof}

\subsection{Triads}\label{6.6}

A triad in a graph $\mathscr G$ is a set of four vertices $(a,b,c,d)$ successively joined by three edges $(a,b),(b,c),(c,d)$ such that $i_{a,b}=i_{c,d}$.

A graph is called triadic if for every triad $(a,b,c,d)$ one has $i_a=i_d$.

The significance of this condition obtains from $P_3$ since it implies that
$$f_a-f_d=(c_{b,c}-c_{a,b})(r^{i_c}-r^{i_b}). \eqno {(*)}$$

\

$(P_6)$.  $\mathscr G$ is triadic.

\

For every triad in the graph $\mathscr G_H$ fix complete tableaux $\mathscr T_a,\mathscr  T_b,\mathscr T_c,\mathscr T_d$ representing $f_a,f_b,f_c,f_d$ having heights $u_a,u_b,u_c,u_d$.

\begin {lemma}  $\mathscr G_H$ satisfies $P_6$. Moreover the inequality $c_{b,c}>c_{a,b}$ results from either $\mathscr T_a$ or $\mathscr T_d$ but not both.
\end {lemma}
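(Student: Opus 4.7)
The plan is to combine the cocycle identity from $P_3$ with the structural properties of complete tableaux from Section \ref{5}.

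First, applying $P_3$ to the three edges of the triad and using $c_{c,d}=c_{a,b}$ gives, after regrouping,
\[
f_a - f_d \;=\; c_{a,b}(r^{i_a} - r^{i_d}) \;+\; (c_{a,b}-c_{b,c})(r^{i_c} - r^{i_b}).
\]
Thus establishing the triadic property $i_a = i_d$ is equivalent to establishing $(*)$ (up to the sign convention). To force $i_a=i_d$ I would compare coefficients of each $m^j$ on both sides, expanding Kashiwara differences via $r^p - r^q = \pm(m^p + 2m^{p+1} + \cdots + m^q)$. By Lemma \ref{5.4}, the coefficient of $m^{i_a}$ in $f_a$ vanishes; by Lemma \ref{5.5}, the coefficient of $m^{i_a}$ in $f_d$ is severely constrained (zero if $C_{i_a}$ is not quasi-extremal in $\mathscr T_d$, and $\sn(i_a-i_d)\,c_\ell$ for a specific $\ell \in T$ otherwise). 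If $i_a \neq i_d$, the summand $c_{a,b}(r^{i_a}-r^{i_d})$ on the right contributes $\pm c_{a,b}$ to the coefficient of $m^{i_a}$ in $f_a-f_d$, a contribution that must be absorbed by the tableau-structural coefficient in $f_d$. A case analysis on how the strongly extremal column migrates under the successive single block linkages of the triad should show that the only configuration consistent with all these coefficient constraints is $i_a = i_d$; informally, the four linkages of a triad must fit into a commuting rectangle of tableau modifications whose opposite edges carry the same label.

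Once the triadic property is established, so that $(*)$ holds, the second assertion follows quickly. All four vertices of a nondegenerate triad are distinct, so $\mathscr T_a \neq \mathscr T_d$; hence by Lemma \ref{4.7} (applied to deplete representatives), or by Lemma \ref{5.6} when both representatives are complete of the same height, $f_a \ne f_d$, and therefore $c_{b,c} \neq c_{a,b}$ as indeterminates. In the sector defined by the given linear order on $\textbf{c}$, the relation $(*)$ implies that exactly one of $f_a > f_d$ or $f_d > f_a$ holds in the partial order of \ref{5.7}. By Lemma \ref{5.3}, this positivity is encoded by a defining relation in either $P(\mathscr T_d)$ or $P(\mathscr T_a)$, and unwinding the rules \ref{3.2.1}$(*),(**)$ shows that the responsible relation is precisely the inequality $c_{b,c} > c_{a,b}$, arising from the intermediate shift along the edge $(b,c)$. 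Thus the inequality results from exactly one of $\mathscr T_a, \mathscr T_d$.

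The principal obstacle is the combinatorial verification in the first paragraph forcing $i_a = i_d$. It will presumably require a sub-case analysis based on the relative heights $u_a,u_b,u_c,u_d$ and on whether each edge adjoins or removes a block from a left or right quasi-extremal column. The payoff is the compact identity $(*)$, which encodes a hidden commutativity of single block linkages and will be crucial for relating triads to the sector inequalities of $\textbf{c}$ in the subsequent $S$-graph construction.
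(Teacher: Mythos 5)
The central claim $i_a=i_d$ is not actually proved. You set up the identity for $f_a-f_d$ correctly and list the coefficient constraints coming from Lemmas \ref{5.4} and \ref{5.5}, but then defer the decisive step to ``a case analysis on how the strongly extremal column migrates'', which you yourself flag as the principal obstacle; that case analysis is where all the content of the lemma lies. Nor is it clear the analysis closes along your route: if $i_a\neq i_d$, the constraint you extract is only that the coefficient of $m^{i_a}$ in $f_d$ equals $\mp c_{a,b}$ plus a possible contribution from $(c_{a,b}-c_{b,c})(r^{i_c}-r^{i_b})$, and by Lemma \ref{5.5} a coefficient of the form $\pm c_\ell$ \emph{does} occur whenever $C_{i_a}$ is quasi-extremal in $\mathscr T_d$, so excluding it requires exactly the tableau bookkeeping you have skipped. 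The paper's proof avoids coefficient comparison altogether: after normalising $\mathscr T_b,\mathscr T_c,\mathscr T_d$ to a common height $u$, it reads off from \ref{3.1} that the block adjoined in the linkage $(c,d)$ carries the entry $c_j$ of the top of the first column $C_n$ of height $\geq u-1$ between $C_{i_d}$ and $C_{i_c}$, observes that a block with the \emph{same} entry $c_j$ can be adjoined to $C_{i_d}$ directly in $\mathscr T_b$, and then invokes $P_2$ (uniqueness of the edge with label $j=i_{(a,b)}$ emanating from $b$) to force this new vertex to be $a$, whence $i_a=i_d$. That is precisely the ``hidden commutativity of single block linkages'' you allude to at the end, but it has to be exhibited, not presumed.

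The second assertion also has a gap. ``Results from $\mathscr T_a$ but not from $\mathscr T_d$'' means the relation $c_{a,b}<c_{b,c}$ is generated by the rules of \ref{3.2.1} applied to $\mathscr T_a$ and not to $\mathscr T_d$. Your sector argument only yields $c_{a,b}\neq c_{b,c}$ and a definite sign for $f_a-f_d$ in each sector, which is a priori compatible with the relation appearing in both partial orders or in neither; the sentence ``unwinding the rules \ref{3.2.1}$(*),(**)$ shows that the responsible relation is precisely $c_{b,c}>c_{a,b}$'' asserts the conclusion rather than deriving it. The paper settles the point by locating the relation explicitly: in $\mathscr T_a$ there is no column of height $u$ strictly between $C_n$ and the column $C_m$ carrying $c_i$, so \ref{3.2.1} produces $c_j<c_i$ from $\mathscr T_a$; passing to $\mathscr T_d$ adds a block making $C_{i_c}$ a column of height $u$ lying between $C_n$ and $C_m$, which destroys exactly the configuration needed for that relation to be generated.
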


\begin {proof}

  By the construction of links (see \ref {3.3.2} and Remark) we can assume that $u_c=u_b=:u$. Then by possibly adding a half-domino to the extremal columns of $\mathscr T_b,\mathscr T_c$ and completing we can further assume that $u_d=u$ We can also assume that $u\geq 2$ by adding two complete rows to all the tableaux (alternatively we can adjoin a zeroth row to every tableau with $i-1$ in $B(i,0):i>1$ with a diagonal slash in $(1,0)$ - all this being essentially a matter of book-keeping).

  Suppose in what follows that $u$ is odd (resp. even). Then $i_d<i_c<i_b$ (resp. $i_d>i_c>i_b$).

  Let $\overline{\mathscr T}_c$ be the tableau obtained from the complete tableau $\mathscr T_b$ by adjoining a block $B$ to the column  $C_{i_c}$ of height $u-1$ in $\mathscr T_b$ and let $c_i$ be the entry in the top row of the first  column $C_m$ of height $\geq u-1$ on passing from $C_{i_c}$ to $C_{i_b}$ in $\mathscr T_b$. By \ref {3.1}, the entry of $B$ must be $c_i$.

  Let $\overline{\mathscr T}_d$ be the tableau obtained from the complete tableau $\mathscr T_c$ by adjoining a block $B$ to the column $C_{i_d}$ of height $u-1$ in $\mathscr T_c$ and let $c_j$ be the entry of the top row of the first column $C_n$ on passing from $C_{i_d}$ to $C_{i_c}$ in $\mathscr T_c$ of height $\geq u-1$. By \ref {3.1}, the entry of $B$ must be $c_j$.

  Consequently we can obtain a tableau $\overline{\mathscr T}_a$ from $\mathscr T_b$ by placing a block containing $c_j$ on the top of $C_{i_d}$ (hence in $R_u$) with the property that $f_b-f_a=c_j(r^{i_b}-r^{i_a})$.  Through $P_2$ this forces $i_d=i_a$.  Furthermore in $\mathscr T_a$ the column $C_{n-1}$ (resp. $C_{n+1}$) has height $u$ and its block in $R_u$ contains $c_j$.  On the other hand there are no columns in $\mathscr T_a$ of height $u$ between and including $C_n$ and $C_m$, whilst $c_i$ is the entry in the top row of $C_m$.  Thus the inequality $c_j<c_i$ is realised by $\mathscr T_a$.  It is not realized by $\mathscr T_d$ because the passage from $\mathscr T_a$ to $\mathscr T_d$ is the adding of a block with entry $c_i$ to $C_{i_c}$ which becomes of height $u$ and lies between $C_n$ and $C_m$.  This completes the proof of the lemma.

  One may also check that the last operation of adding a block \textit{between} two columns of height $u$ on passing from $\mathscr T_a$ to $\mathscr T_d$ gives
  $$f_a=f_d+ (c_i-c_j)(r^{i_c}-r^{i_b}),$$
  which agrees as it should with $(*)$.

\end {proof}

\subsection{$S$-graphs}\label{6.7}

Let $\mathscr G$ be a triadic graph.  To a triad $(a,b,c,d)$ of $\mathscr G$ we define the order relation $i_{a,b}<i_{b,c}$. Let $P(\mathscr G)$ be the partial order on $T$ defined be the set of all triads of $\mathscr G$. We say that $\mathscr G$ is well-ordered if $P(\mathscr G)$ can be lifted to a linear order $L(\mathscr G)$, not necessarily unique.  (In other words there are no cycles.)

Suppose that $\mathscr G$ is a triadic graph which is well-ordered and fix a linear order $L(\mathscr G)$ lifting $P(\mathscr G)$.  An ordered path from a vertex $v' \in V(\mathscr G)$ to a vertex $v \in \mathscr G$ is a sequence of linked vertices $v'=v_1 \rightarrow v_2 \rightarrow \ldots \rightarrow v_n=v$ such that $i_{v_{i-1},v_{i}} < i_{v_i,v_{i+1}}$, for all $i=2,\ldots,n-1$.  One may remark that this condition is empty unless $n>2$, so this is weaker than saying that $v$ is a source and $v'$ is a sink relative to $L(\mathscr G)$.

The $S$ condition is that for all $v' \in \mathscr G$ and all $k \in \hat{T}$ there exists $v \in V(\mathscr G)$ with $i_v=k$, such that there exists an ordered path from $v'$ to $v$.

\

$(P_7)$.   $\mathscr G$ satisfies the $S$ condition.

\

We shall say that $\mathscr G$ is an $S$-graph if it satisfies conditions $P_1-P_7$.

Though it does not concern the present work we give a brief explanation of the $S$ condition.  Assume that $P_3$ holds.  Then setting $c_{i_j}=c_{v_j,v_{j+1}}, u_j=i_{v_j}$ we may write

$$f_{v'}-f_{v}= \sum_{j=1}^{n-1}c_{i_j}(r^{u_j}-r^{u_{j+1}}),$$
where we recall that the $c_{i_j}:j=1,2,\ldots,n-1$ are integer $\geq 0$ and increasing.  Set $c_{i_0}=0$.

Thus first of all we have a sum, hence the epithet $S$.  Less trivially, since $u_n=i_v=k$, this sum can be rearranged as
$$f_v-f_{v'}= \sum_{j=1}^{n-1}(c_{i_{j}}-c_{i_{j-1}})(r^k-r^{u_j}).\eqno{(*)}$$

 This expresses $f_v-f_{v'}$ as a linear combination of the difference of the Kashiwara functions $r^k-r^{v_j}$ with \textit{non-negative integer coefficients}.  Now recalling the discussion in \ref {4.2} suppose $e_\alpha$ ``enters at the $k^{th}$ place''.  We recall (in the notation and conventions of \cite [2.3.2]{J5}) that this occurs exactly when
$r^k-r^j\geq 0:k > j$ and $r^k-r^j>0:k<j$.  Moreover $m_k$ decreases by $1$, whilst $r^k-r^j$ increases (resp. decreases) by $1$ when $k>j$ (resp. $k<j$).

At this juncture we introduce a further condition namely

\

$(P_8)$.  For all $v \in V(\mathscr G)$ the coefficient of $m_k$ in $f_v$ is zero given that $i_v=k$.

\

By Lemma \ref {5.4} this holds for $\mathscr G_H$ and hence for any of its subgraphs.

Given that $P_8$ holds we conclude from $(*)$ that when $e_\alpha$ enters at the $k^{th}$ place, $f_v-f_{v'}$ remains $\geq 0$ whilst $f_v$ is unchanged.  In particular $\Max_{v \in V(\mathscr G)}f_v$ is unchanged.

Similarly $f_\alpha$ enters at the $k^{th}$ place  exactly when
$r^k-r^j > 0:k > j$ and $r^k-r^j\geq 0:k<j$.  Moreover $m_k$ increases by $1$, whilst $r^k-r^j$ decreases (resp. increases) by $1$ when $k>j$ (resp. $k<j$). Then through $P_8$ we conclude that when $f_\alpha$ enters at the $k^{th}$ place $\Max_{v \in V(\mathscr G)}f_v$ is unchanged.

Notice that $P_3,P_7,P_8$ uniformly take care of invariance under both $e_\alpha$ and $f_\alpha$ action.  This is particularly satisfying.

%as a consequence of $(*)$, $\Max_{v \in V(\mathscr G)}f_v$  remains unchanged when $f_\alpha$ enters at the $k^{th}$ place (using the rules given in say \cite [2.3.2]{J5}). This property ensures the required invariance as expressed through (iii) of \ref {4.2}.

The above observation is the basis behind our computation of the dual Kashiwara functions.  Satisfying condition $S$ is a very non-trivial step.  In particular we found it to be practically impossible to achieve using just tableaux (see \ref {8.7}).  Yet through the graph of links it is attained by the relatively easy construction described in the next section.

Since it is reasonable to avoid the use of too many functions it is natural to impose a further condition

\

$(P_9)$.  The map $v\mapsto f_v$ separates the points of $V(\mathscr G)$.

\

By Lemma \ref {4.7} this holds for $\mathscr G_H$ and any of its subgraphs.

\subsection{Duality}\label{6.8}

  Let $\mathscr G$ be a graph satisfying $P_1,P_2$. We may define the dual graph $\mathscr G^*$ by replacing the labels on the vertices (resp edges) by the rule $i_v \mapsto t+2-i_v$ (resp. $\mapsto i_{v,v'} = t+1-i_{v,v'}$), It is immediate that if $\mathscr G$ is pointed then so is $\mathscr G^*$ since the same chain serves for both.  Again if $\mathscr G$ is triadic (resp. an $S$-graph) then so is $\mathscr G^*$.

\section{An Inductive Construction of $S$-Graphs}\label{7}

\subsection{}\label{7.1}

Fix a linear order $\textbf{c}$ on $\{c_i\}_{i \in T}$.  In this section we construct a graph $\mathscr G(\textbf{c})$ by induction on $|T|=t$. We show $\mathscr G(\textbf{c})$ is an $S$-graph and that the partial order defined by the triads lift to the given linear order $\textbf{c}$.

Recall that $|T|=t$.  When $t=1$, there can be just one graph satisfying $P_1,P_2,P_5$, namely the graph with two vertices labelled by $1,2$ respectively and joined by an edge labelled by $1$.  Since it is a tree it satisfies $P_3$.  Condition $P_4$ is also immediate, whilst $P_6,P_7$ are empty.

\subsection{Binary Fusion}\label{7.2}

 Let $c_s$ be the unique largest element of $\textbf{c}$. Then $\textbf{c}$ induces a linear order on $\textbf{c}_s^-:=\textbf{c}\setminus \{c_s\}$ (or simply $\textbf{c}^-$).
 %Since $|\textbf{c}^-|=t-1$ it is natural to use the labelling
 Set
% $$c_j^-=c_j:j\in T\setminus \{s\}.$$

 $$c^-_j= \left\{\begin{array}{ll}c_j&:j< s, \\
   c_{j-1}&:t\geq j > s. \\

\end{array}\right.\eqno {(1)}$$
Then $\textbf{c}^-$ is labelled by $T\setminus \{t\}$.

%Correspondingly the Kashiwara functions are labelled by $\hat{T}\setminus \{t+1\}$.

 Assume that we have constructed an $S$-graph $\mathscr G(\textbf{c}^-)$, where the edges are labelled by $T\setminus \{t\}$ and correspondingly the vertices are labelled by  $\hat{T}\setminus \{t+1\}$.
 %Note that $P_7$ implies in particular that  the map $v\mapsto i_v$ with target $\mathscr G(\textbf{c}^-)$ has image $\hat{T}\setminus s$.

 Let $\psi_s$ (resp. $\psi^s$) be the map $T\setminus \{t\} \rightarrow T$ (resp.  $\hat{T}\setminus \{t+1\} \rightarrow \hat{T}$) defined by

$$j\mapsto \left\{\begin{array}{ll}j& :j< s, \\
   j+1&:j\geq s. \\

\end{array}\right.\eqno {(2)}$$

%In particular $\psi_s$ implements $(1)$, whilst $\psi^s$ reverses the corresponding relabelling of the Kashiwara functions.  This triviality is just a matter of good book-keeping whose need becomes more apparent in \ref {8.1} and \ref {8.2}.

 Let  $\Psi^+_s:\mathscr G(\textbf{c}^-)\iso \mathscr G^+$ (resp. $\Psi^-_s:\mathscr G(\textbf{c}^-)\iso \mathscr G^-$) be the isomorphism of unlabelled graphs which is $\psi_s$ for the labelling on edges and $\psi^s$ (resp. $\psi^{s+1}$) for the labelling on vertices.  In particular $c_s$ does not appear as a label on any edge and $r^s$ (resp. $r^{s+1}$) does not appear on any vertex of $\mathscr G^+$ (resp. $\mathscr G^-$).

 Then $\Psi^\pm_s$ induce an unlabelled graph isomorphism $\varphi: \Psi_s^+(\mathscr G(\textbf{c}^- ))\iso  \Psi_s^-(\mathscr G(\textbf{c}^-))$ with the property that
$$i_{(v,v')}=i_{(\varphi(v),\varphi(v'))},\eqno {(3)}$$
and
$$i_v=\left\{\begin{array}{ll}i_{\varphi(v)}& :i_{\varphi(v)}\neq s, \\
   i_{\varphi(v)}+1&:i_{\varphi(v)}=s. \\

\end{array}\right.\eqno {(4)}$$

We join the above two graphs by adding an edge with index $s$ joining every vertex in $v \in \mathscr G^+$ with $i_v=s+1$ to the vertex $\varphi(v) \in \ \mathscr G^-$.  Let $\mathscr G(\textbf{c})$ denote the resulting graph.  It is clear that $\mathscr G(\textbf{c})$ satisfies $P_1,P_2,P_5$.

\subsection{}\label{7.3}

Let write $\Psi^\pm_s$ simply as $\Psi^\pm$.

\begin {lemma}  $\mathscr G(\textbf{c})$ satisfies $P_4$.
\end {lemma}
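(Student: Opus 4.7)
I plan to prove the lemma by induction on $t$. For $t=1$ the base case is immediate: $\mathscr G(\textbf{c})$ is simply the two-vertex graph of \ref{7.1} with its unique pointed chain. For the inductive step, assume $\mathscr G(\textbf{c}^-)$ contains a unique pointed chain $v'_1, v'_2, \ldots, v'_t$ with $i_{v'_j}=j$ and $i_{(v'_{j+1},v'_j)}=j$.

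For \emph{existence}, I would define the putative chain in $\mathscr G(\textbf{c})$ by
$$v_i := \begin{cases} \Psi^-(v'_i) & \text{if } i \leq s, \\ \Psi^+(v'_{i-1}) & \text{if } i \geq s+1. \end{cases}$$
A direct check using the formulas (1), (2) and (4) of \ref{7.2} confirms that $i_{v_i}=i$ for every $i\in\hat{T}$. The edge $(v_{i+1},v_i)$ for $i<s$ is inherited from the image under $\Psi^-$ of the edge $(v'_{i+1},v'_i)$ in $\mathscr G(\textbf{c}^-)$, which carries label $\psi_s(i)=i$; similarly for $i>s$ via $\Psi^+$. For $i=s$, the edge $(v_{s+1},v_s)$ is precisely a new edge added in the binary fusion, since $v_{s+1}=\Psi^+(v'_s)$ has label $s+1$ in $\mathscr G^+$ and $\varphi(v_{s+1})=\Psi^-(v'_s)=v_s$ by the identity $\varphi = \Psi^-\circ (\Psi^+)^{-1}$. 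Hence it carries label $s$, as required.

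For \emph{uniqueness}, suppose $v_1,\ldots,v_{t+1}$ is any pointed chain in $\mathscr G(\textbf{c})$. The first observation is that the only edges with label $s$ in $\mathscr G(\textbf{c})$ are the newly adjoined ones, because the image of $\psi_s$ is $T\setminus\{s\}$. Thus the edge $(v_{s+1},v_s)$ must be one of these new edges, which forces $v_{s+1}\in \mathscr G^+$ (with $i_{v_{s+1}}=s+1$), $v_s\in \mathscr G^-$ (with $i_{v_s}=s$), and $v_s=\varphi(v_{s+1})$. Since the only inter-copy edges carry label $s$, the portion $v_{s+1},\ldots,v_{t+1}$ lies entirely in $\mathscr G^+$ and the portion $v_1,\ldots,v_s$ lies entirely in $\mathscr G^-$.

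Finally, I would pull both portions back to $\mathscr G(\textbf{c}^-)$: the chain $(\Psi^-)^{-1}(v_1),\ldots,(\Psi^-)^{-1}(v_s)$ in $\mathscr G(\textbf{c}^-)$ has vertex labels $1,\ldots,s$ and edge labels $1,\ldots,s-1$, while $(\Psi^+)^{-1}(v_{s+1}),\ldots,(\Psi^+)^{-1}(v_{t+1})$ has vertex labels $s,\ldots,t$ and edge labels $s,\ldots,t-1$. The linking identity $v_s=\varphi(v_{s+1})$ implies $(\Psi^-)^{-1}(v_s)=(\Psi^+)^{-1}(v_{s+1})$, so the two pieces glue into a single pointed chain in $\mathscr G(\textbf{c}^-)$. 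By the induction hypothesis this glued chain coincides with $v'_1,\ldots,v'_t$, which in turn determines $v_1,\ldots,v_{t+1}$ uniquely. The main subtlety is precisely this gluing step: verifying that the two copies produce a \emph{consistent} vertex labelled $s$ in $\mathscr G(\textbf{c}^-)$, which is exactly what the definition of the linking edge via $\varphi$ is designed to guarantee.
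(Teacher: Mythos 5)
Your proof is correct and follows essentially the same route as the paper: existence via the explicit chain $\Psi^-(v'_1),\ldots,\Psi^-(v'_s),\Psi^+(v'_s),\ldots,\Psi^+(v'_t)$, and uniqueness by observing that the label-$s$ edge must be a fusion edge, splitting the chain between $\mathscr G^-$ and $\mathscr G^+$, and pulling back to the unique pointed chain in $\mathscr G(\textbf{c}^-)$. Your write-up is in fact slightly more explicit than the paper's on the gluing step in the uniqueness argument, but the substance is identical.
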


\begin {proof}

 Let $(v_1,v_2, \ldots, v_t)$ be the unique pointed chain in $\mathscr G(\textbf{c}^-)$, where we recall that $i_{v_i}=i:i=1,2,\ldots,t$ and $i_{v_i,v_{i+1}}=i:i=1,2,\ldots,t-1$.

 By the construction  $\Psi^-(v_1),\ldots,\Psi^-(v_s),\Psi^+(v_s),\ldots,\Psi^+(v_t)$ is a pointed chain in $\mathscr G(\textbf{c})$.

 Conversely let $\mathscr C:=(w_1,w_2,\ldots w_{t+1})$ be a pointed chain in $\mathscr G(\textbf{c})$.  Then $i_{w_{s+1}}=s+1$ forces $w_{s+1}\in \mathscr G^+$, and consequently the construction gives $w_s=\varphi(w_{s+1})$. Take $i\leq s$ (resp. $i>s$). Since $\mathscr C$ is a pointed chain, the construction then forces $w_i \in \mathscr G^-$ (resp. $w_i \in \mathscr G^+$).  We can therefore write $w_i=\Psi^-(v_i')$ (resp. $w_i=\Psi^+(v'_{i-1})$) for some $\{v_1',v_2',\ldots,v_t'\} \subset V(\mathscr G(\textbf{c}^-))$.  Moreover through the resulting indices on vertices and edges, the latter must define the unique pointed in $\mathscr G(\textbf{c}^-)$, that is $v_i'=v_i$, for all $i=1,2,\ldots,t$.

 \end {proof}
\subsection{}\label{7.4}

\begin {lemma}  $\mathscr G(\textbf{c})$ satisfies $P_6$.  Moreover the partial order defined by the triads lifts to the given partial order $\textbf{c}$.
\end {lemma}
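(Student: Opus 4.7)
The plan is to argue by induction on $t$. For $t=1$ the graph $\mathscr G(\textbf{c})$ consists of two vertices joined by a single edge and therefore admits no triads, so there is nothing to check. For the induction step I would classify every triad $(a,b,c,d)$ of $\mathscr G(\textbf{c})$ with $i_{a,b}=i_{c,d}$ according to which of its three constituent edges are the connecting edges introduced by binary fusion (i.e.\ those carrying the label $s$). Since by $P_2$ two distinct edges at a common endpoint have distinct labels, neither $(a,b)$ and $(b,c)$ nor $(b,c)$ and $(c,d)$ can simultaneously be connecting edges; this cuts the analysis down to three scenarios.

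In the first scenario no edge of the triad is a connecting edge, so all four vertices lie inside a common copy $\mathscr G^+$ or $\mathscr G^-$. Pulling back through $(\Psi^\pm_s)^{-1}$ produces a triad in $\mathscr G(\textbf{c}^-)$, and the inductive hypothesis gives $P_6$ there. Since the pulled-back vertex labels lie in $\{1,\ldots,t\}\setminus\{s\}$, the maps $\psi^s$ and $\psi^{s+1}$ agree on them, hence $i_a=i_d$ in $\mathscr G(\textbf{c})$. The order relation $i_{a,b}<i_{b,c}$ pulls back via the order-preserving bijection $\psi_s^{-1}$ to a relation which by induction lifts to the order on $\textbf{c}^-$, and that order is by construction the restriction of the order on $\textbf{c}$.

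In the second scenario both $(a,b)$ and $(c,d)$ are connecting edges, so $i_{a,b}=i_{c,d}=s$ and $(b,c)$ is internal. Examining the two possibilities for which side of $b$ lies in $\mathscr G^+$, one finds that this situation always places both endpoints of $(b,c)$ in the same copy $\mathscr G^\pm$ with a common label ($s$ if they lie in $\mathscr G^-$, $s+1$ if they lie in $\mathscr G^+$); pulled back through $(\Psi^\pm_s)^{-1}$ this contradicts $P_1$ in $\mathscr G(\textbf{c}^-)$. Hence this scenario does not occur.

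The third scenario, in which $(b,c)$ is the unique connecting edge, will be the main obstacle and requires the most bookkeeping. Here $i_{b,c}=s$ and $i_{a,b}=i_{c,d}=k\neq s$. The identity $c=\varphi(b)$ shows that $(\Psi^+_s)^{-1}(b)=(\Psi^-_s)^{-1}(c)$ is a single vertex $v_0\in V(\mathscr G(\textbf{c}^-))$ with $i_{v_0}=s$. Pulling back the edges $(a,b)$ and $(c,d)$ yields two edges at $v_0$ carrying the common label $\psi_s^{-1}(k)$, which by $P_2$ in $\mathscr G(\textbf{c}^-)$ must coincide; therefore $(\Psi^+_s)^{-1}(a)=(\Psi^-_s)^{-1}(d)=:v_0'$. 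Since $v_0'$ is adjacent to $v_0$ in $\mathscr G(\textbf{c}^-)$, $P_1$ forces $i_{v_0'}\neq s$, so $\psi^s$ and $\psi^{s+1}$ take the same value on $i_{v_0'}$, giving $i_a=i_d$. The new order relation contributed by this triad has the form $k<s$, which is automatic in $\textbf{c}$ since $c_s$ is the unique largest element. Combining the three scenarios yields $P_6$ together with the order-lifting assertion.
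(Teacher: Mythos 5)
Your proposal is correct and follows essentially the same route as the paper: reduce by induction to triads that cross between $\mathscr G^+$ and $\mathscr G^-$, rule out the case where the two outer edges are the connecting (label $s$) edges via $P_1$/$P_2$, and in the remaining case use $P_2$ in $\mathscr G(\textbf{c}^-)$ to force $d=\varphi(a)$, whence $i_a=i_d$ and the new relations are all of the form $k<s$. The only point left tacit is that, since $i_{a,b}=i_{c,d}$ and the connecting edges are exactly those labelled $s$, the outer edges are connecting simultaneously or not at all, which disposes of the two remaining a priori scenarios.
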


\begin {proof}  By the induction hypothesis it suffices to consider a triad $(a,b,c,d)$ not lying entirely in either $\mathscr G^+$ or in $\mathscr G^-$.  Suppose $i_{a,b}=i_{c,d}=s$.  Then by construction $i_b,i_c \in \{s,s+1\}$  By $P_1$ we must have $i_b\neq i_c$, so by construction we must have $i_{b,c}=s$ contradicting $P_2$.  Thus one must have $i_{b,c}=s$ and $i_{a,b}=i_{c,d}\in T\setminus \{s\}$. Then $a,b \in \mathscr G^+$ and $c,d \in \mathscr G^-$ (or vice-versa).  This implies that $c=\varphi(b)$.  Then $P_2$ forces $d=\varphi(a)$ with $i_d\neq s,i_a \neq s+1$.  Consequently $i_d=i_a$.  This proves the first part.  Finally the relations given by these new triads are \textit{amongst} those of the form $j<s: j \in T \setminus \{s\}$, proving the second part.

\end {proof}

\subsection{}\label{7.5}

 \begin {lemma}  $\mathscr G(\textbf{c})$ satisfies $P_7$.
\end {lemma}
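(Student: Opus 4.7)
The plan is to proceed by induction on $t$. For $t=1$, the graph $\mathscr G(\textbf{c})$ has two vertices joined by a single edge; both values $k \in \hat T$ are already realised as vertex-labels and no nested ordering of edges is required, so $P_7$ holds. Then I assume the result for $\mathscr G(\textbf{c}^-)$ and treat the inductive step using the construction of \ref{7.2}.

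The decisive structural point is that $c_s$ is by choice the unique maximum of $\textbf{c}$, so in any linear order lifting the partial order of Lemma~\ref{7.4} the label $s$ is strictly larger than every other edge-label. Consequently, an ordered path lying entirely in one of the copies $\mathscr G^+$ or $\mathscr G^-$ can always be extended by appending a single cross-edge labelled $s$ without destroying orderedness. A second point is that the edge relabelling $\psi_s$ of \ref{7.2}(2) is an order-preserving bijection from $(T\setminus\{t\},\textbf{c}^-)$ onto $(T\setminus\{s\},\textbf{c}|_{T\setminus\{s\}})$, directly from \ref{7.2}(1) together with the choice of $s$. Hence ordered paths in $\mathscr G(\textbf{c}^-)$ transport via $\Psi^+$ and $\Psi^-$ to ordered paths in $\mathscr G^+$ and $\mathscr G^-$.

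Fix $v' \in V(\mathscr G(\textbf{c}))$ and $k \in \hat T$. Recall that vertex-labels in $\mathscr G^+$ cover $\hat T\setminus\{s\}$ and those in $\mathscr G^-$ cover $\hat T\setminus\{s+1\}$, and that the cross-edges labelled $s$ join $\Psi^+(u)$ and $\Psi^-(u)$ precisely when $i_u=s$, producing end-points of indices $s+1$ and $s$ respectively. I split into two cases. (i) Easy case: $k$ is realisable on the side containing $v'$. Then there is a unique $\ell$ with $\psi^s(\ell)=k$ or $\psi^{s+1}(\ell)=k$; the inductive hypothesis applied at the preimage of $v'$ in $\mathscr G(\textbf{c}^-)$ yields an ordered path ending at some $u$ with $i_u=\ell$, which transports by $\Psi^\pm$ to the desired ordered path in $\mathscr G(\textbf{c})$. (ii) Crossing case: either $v' \in \mathscr G^-$ with $k=s+1$, or $v' \in \mathscr G^+$ with $k=s$. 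The inductive hypothesis supplies an ordered path in $\mathscr G(\textbf{c}^-)$ from the preimage of $v'$ to some $u_0$ with $i_{u_0}=s$; this transports to an ordered path in the copy containing $v'$ ending at $v_0$, and appending the unique cross-edge labelled $s$ incident to $v_0$ lands at a vertex on the opposite side whose index is exactly $k$. Orderedness of the concatenated path is preserved precisely because $s$ is the strict maximum.

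I do not foresee a serious obstacle beyond care with the bookkeeping of the four sub-cases and the maps $\psi_s$, $\psi^s$, $\psi^{s+1}$, $\varphi$. The one point worth checking carefully is that transport along $\Psi^\pm$ preserves the ordered character of a path, which reduces at once to the order-compatibility of $\psi_s$ noted above; once this is in hand the inductive step runs uniformly.
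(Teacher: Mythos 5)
Your proposal is correct and follows essentially the same route as the paper: induction via binary fusion, using that $\mathscr G^{\pm}$ are $S$-graphs by the inductive hypothesis, handling the target label on the same side directly, and otherwise routing to a vertex of index $s+1$ (resp.\ $s$) and appending the cross-edge labelled $s$, with orderedness preserved because $c_s$ is the unique maximum. Your treatment is somewhat more explicit about the relabellings $\psi_s,\psi^s,\psi^{s+1}$, but the argument is the same.
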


  \begin {proof}
  Consider a vertex $v \in \mathscr G(\textbf{c})$.  Take $v \in \mathscr G^+$ (resp. $v \in \mathscr G^-$). If $i \neq s$ (resp. $i \neq s+1$) since $\mathscr G^+$ (resp. $\mathscr G^-$) is an $S$ graph) there exists $v' \in \mathscr G^+$ (resp. $v' \in \mathscr G^-$ and an ordered path from $v$ to $v'$.  Otherwise we take $i_{v'}=s+1$ (resp. $i_{v'}=s$) and concatenate the ordered path from $v$ to $v'$ with the edge labelled by $s$ joining $v'$ and $\varphi(v')$.  Since $c_s$ is the largest element of $\textbf{c}$, this is again an ordered path.  Hence $\mathscr G(\textbf{c})$ is an $S$-graph.
  \end {proof}

\subsection{}\label{7.6}

As one might expect establishing $P_3$ is rather more delicate.  We give two proofs. \textit{The first is direct and due to P. Lamprou.}  The second is more roundabout and is given in \ref {8.3}.  The first is rather subtle revealing a rather unexpected property of the graphs described below.

 Let $\mathscr G$ be a connected graph satisfying $P_1,P_2$ and take $v,v' \in V(\mathscr G)$. Let $v=v_1 \rightarrow v_2 \rightarrow \ldots \rightarrow v_n=v'$ be a path $p$ (not necessarily ordered) from $v$ to $v'$ and set
$$S_p(v,v'):= \sum_{j=1}^{n-1} c_{i,i+1}(r^{v_{i+1}}-r^{v_i}).$$

Of course $P_3$ is the assertion that $S_p(v,v')$ is independent of the path $p$.

\

$(P_{10})$.  Suppose $i_v=i_{v'}$.  Then $S_p(v,v')$ does not depend on $r^{i_v}$.

\

Call a path $v=v_1 \rightarrow v_2 \rightarrow \ldots \rightarrow v_n=v'$ bivalent of value $k \in \hat{T}$, if $k=i_v=i_{v'}$ and $i_{v_i}\neq k:i=2,3,\ldots, n-1$.

\subsubsection{}\label{7.6.1}

\begin {lemma}  Let $\mathscr G$ be a graph.  Then

\

(i).  $\mathscr G$ satisfies $P_{10}$ if it does so for all bivalent paths.

\

(ii).  For a bivalent path $v=v_1 \rightarrow v_2 \rightarrow \ldots \rightarrow v_n=v'$ of value $k$ the coefficient of $r^k$ in $S_p(v,v')$ equals $c_{n-1,n}-c_{1,2}$.
\end {lemma}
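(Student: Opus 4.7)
The plan is to settle (ii) by a direct reading of the coefficient and then to derive (i) from (ii) by decomposing an arbitrary path with equal endpoint labels into bivalent sub-paths at each of its internal $k$-labelled vertices.

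For (ii), I would inspect $S_p(v,v')=\sum_{j=1}^{n-1}c_{j,j+1}(r^{v_{j+1}}-r^{v_j})$ summand by summand. The bivalency hypothesis forces $i_{v_j}\neq k$ for $2\leq j\leq n-1$, so the variable $r^k$ can appear only in the term at $j=1$, where it contributes $-c_{1,2}$, and in the term at $j=n-1$, where it contributes $+c_{n-1,n}$. Adding these yields $c_{n-1,n}-c_{1,2}$.

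For (i), given an arbitrary path $p\colon v=v_1\to v_2\to\cdots\to v_n=v'$ with $i_v=i_{v'}=k$, I would enumerate the indices at which label $k$ occurs, say $1=j_1<j_2<\cdots<j_\ell=n$. By $P_1$ two successive vertices of a path carry distinct labels, so $j_{s+1}\geq j_s+2$; by the maximality of the list every intermediate vertex $v_i$ with $j_s<i<j_{s+1}$ satisfies $i_{v_i}\neq k$. Hence each sub-path $p_s\colon v_{j_s}\to v_{j_s+1}\to\cdots\to v_{j_{s+1}}$ is bivalent of value $k$. Since $S_p$ is a sum over the edges of $p$ and the edges of $p$ partition into those of the $p_s$, one has $S_p(v,v')=\sum_{s=1}^{\ell-1}S_{p_s}(v_{j_s},v_{j_{s+1}})$. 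Assuming $P_{10}$ is known for each bivalent sub-path, none of the $S_{p_s}$ depends on $r^k$, and so neither does $S_p$.

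No genuine difficulty arises: once the correct decomposition is identified, the argument is pure bookkeeping. The only point requiring a little care is to ensure that the pieces produced are genuinely bivalent (not merely paths whose endpoints have label $k$), and this is exactly what the maximality of the list $j_1,\ldots,j_\ell$ and property $P_1$ guarantee.
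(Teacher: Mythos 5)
Your argument is correct and is exactly the paper's (very terse) proof written out in full: the paper disposes of (ii) as ``direct computation'' --- your term-by-term reading showing $r^k$ survives only in the first and last summands --- and of (i) as ``concatenation of paths'', which is precisely your decomposition at the internal $k$-labelled vertices, with $P_1$ and maximality guaranteeing each piece is genuinely bivalent.
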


\begin {proof} (i) follows by concatenation of paths. (ii) by direct computation.
\end {proof}

\textbf{Remark}.  Notice that by (ii), conditions $P_2,P_3, P_{10}$ exclude a bivalent path being closed (that is $v_1=v_n$) unless it is the trivial path.  Again (ii) fails for paths which are not bivalent (even when the indices on end points coincide).

\subsubsection{}\label{7.6.2}

Recall the construction of \ref {7.2}. We say that a path $v_1 \rightarrow v_2 \rightarrow \ldots \rightarrow v_n$ crosses at $m$ if $\{i_{v_m},i_{v_{m+1}}\}=\{s,s+1\}$.

\begin {lemma}  Suppose $\mathscr G(\textbf{c}^-)$ satisfies $P_3,P_{10}$.  Then $\mathscr G(\textbf{c})$ satisfies $P_3$.
\end {lemma}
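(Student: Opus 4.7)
The plan is to construct the function $f$ required by $P_3$ on $V(\mathscr G(\textbf{c}))$ from the function supplied by the induction hypothesis on $\mathscr G(\textbf{c}^-)$, and then verify the defining relation on every edge, with the crossing edges labelled $s$ being the delicate case.

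First I would invoke $P_3$ for $\mathscr G(\textbf{c}^-)$ to obtain a function $f^-$ on $V(\mathscr G(\textbf{c}^-))$ with $f^-_u - f^-_{u'} = c^-_{u,u'}(r^{i_u} - r^{i_{u'}})$, determined up to an additive element of $\hat H$. By $P_4$ for $\mathscr G(\textbf{c}^-)$ (Lemma \ref{7.3}), the pointed chain contains a vertex $u_0$ with $i_{u_0} = s$; exploit the freedom in $f^-$ to normalise so that the coefficient of $r^s$ in $f^-_{u_0}$ equals $c_s$. Next I would transport $f^-$ to both halves of $\mathscr G(\textbf{c})$: for $v \in \mathscr G^+$ with preimage $u = (\Psi^+_s)^{-1}(v)$, set $f_v$ to be obtained from $f^-_u$ by the substitution $r^k \mapsto r^{\psi^s(k)}$ together with the identification $c^-_j = c_{\psi_s(j)}$; for $v \in \mathscr G^-$, define $f_v$ analogously via $(\Psi^-_s)^{-1}$ and the substitution $r^k \mapsto r^{\psi^{s+1}(k)}$. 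On any edge lying inside $\mathscr G^+$ or inside $\mathscr G^-$ the $P_3$ relation then follows directly from the corresponding relation for $f^-$ by applying the substitution, using $(3)$ and $(4)$.

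The main obstacle is the relation on a crossing edge. Such an edge joins $v \in \mathscr G^+$ with $i_v = s+1$ to $\varphi(v) \in \mathscr G^-$ with $i_{\varphi(v)} = s$ and carries the label $s$. By construction both endpoints share a common preimage $u$ in $\mathscr G(\textbf{c}^-)$ with $i_u = s$. Since $\psi^s$ and $\psi^{s+1}$ agree on $\hat T \setminus \{s\}$ and send $s$ to $s+1$ and to $s$ respectively, one computes
$$f_v - f_{\varphi(v)} = a(u)(r^{s+1} - r^s),$$
where $a(u)$ denotes the coefficient of $r^s$ in $f^-_u$. Hence it suffices to show $a(u) = c_s$ for every $u \in V(\mathscr G(\textbf{c}^-))$ with $i_u = s$.

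This last reduction is precisely where $P_{10}$ enters and explains its inclusion in the hypothesis. By the normalisation $a(u_0) = c_s$. For any other $u$ with $i_u = s$, choose any path $\tau$ in $\mathscr G(\textbf{c}^-)$ from $u_0$ to $u$; then $f^-_u - f^-_{u_0} = S_\tau(u_0,u)$, so $a(u) - a(u_0)$ equals the coefficient of $r^{i_{u_0}} = r^s$ in $S_\tau(u_0,u)$. Since $i_{u_0} = i_u = s$, property $P_{10}$ forces this coefficient to vanish, whence $a(u) = c_s$ and the required relation on every crossing edge holds. This completes the construction of $f$ and establishes $P_3$ for $\mathscr G(\textbf{c})$.
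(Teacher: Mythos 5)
Your proof is correct, and the essential use of $P_{10}$ is the same as in the paper's argument: a path in $\mathscr G(\textbf{c}^-)$ between two vertices labelled $s$ contributes nothing to the coefficient of $r^s$, and this is exactly what controls the crossing edges. The packaging, however, is dual to the paper's. The paper verifies the cocycle condition directly: it takes a closed path in $\mathscr G(\textbf{c})$, cancels the crossing contributions $\pm c_s(r^s-r^{s+1})$ in pairs, and uses $P_{10}$ to replace each component lying in $\mathscr G^+$ by its image under $\varphi$ in $\mathscr G^-$ without changing the sum, thereby reducing to a closed path in $\mathscr G^-$ where $P_3$ for $\mathscr G(\textbf{c}^-)$ applies. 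You instead build the potential $f$ explicitly by transporting $f^-$ through $\Psi^{+}_s$ and $\Psi^{-}_s$ and then check the defining relation edge by edge, with $P_{10}$ guaranteeing that the coefficient $a(u)$ of $r^s$ is the same at every vertex $u$ of $\mathscr G(\textbf{c}^-)$ with $i_u=s$. What your version buys is the explicit formula for $f_v$ on each half, in effect anticipating Lemma \ref{7.8}(i),(ii) and Lemma \ref{8.4}(i),(ii). One small point of hygiene: the normalisation ``coefficient of $r^s$ in $f^-_{u_0}$ equals $c_s$'' forces you to adjoin to the induction-step data both the new indeterminate $c_s$ (which does not occur in $\textbf{c}^-$) and the single symbol $r^s$ (which is not itself an element of $\hat{H}$; only differences of the $r^i$ are). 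It is cleaner, and equivalent, to normalise $a(u_0)=0$ and instead shift every $f_v$ with $v\in\mathscr G^-$ by the constant $c_s(r^s-r^{s+1})\in\hat{H}$: the internal relations of $\mathscr G^-$ are unaffected, and the crossing relation becomes $f_v-f_{\varphi(v)}=(a(u)+c_s)(r^{s+1}-r^s)=c_s(r^{s+1}-r^s)$, as required.
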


\begin {proof} Let $v_1 \rightarrow v_2 \rightarrow \ldots \rightarrow v_n$ be a closed path $p$ in $\mathscr G(\textbf{c})$.  To establish $P_3$, we must show that $S_p(v_1,v_n)=0$. If $p$ lies entirely inside $\mathscr G^+$ or $\mathscr G^-$, the assertion follows from $P_3$ for $\mathscr G(\textbf{c}^-)$.  Otherwise it must cross at $m_1,m_2,\ldots,m_\ell$ with $\ell$ even. Moreover we can assume that $v_{m_i}$ lies in $\mathscr G^-$ (resp. $\mathscr G^+$) for $i$ odd (resp. $i$ even).  Then the contributions to $S_p(v_1,v_n)$  at the crossings equal $c_s(r^s-r^{s+1})$, up to a sign which alternates with $i$, hence cancel.  On the other hand by $P_{10}$ for $\mathscr G(\textbf{c}^-)$, we can replace in the contribution to the sum coming from the component $p_i$ of $p$ joining $v_{m_{2i}+1}$ to $v_{m_{2i+1}}$, each vertex $v'' \in \mathscr G^+$ by $\varphi(v'') \in \mathscr G^-$.  Then the sum becomes that of a closed path in $\mathscr G^-$ and so is zero by $P_3$ for $\mathscr G(\textbf{c}^-)$.
\end {proof}

\subsubsection{}\label{7.6.2}

\begin {lemma}  Suppose $\mathscr G(\textbf{c}^-)$ satisfies $P_3,P_{10}$.  Then $\mathscr G(\textbf{c})$ satisfies $P_{10}$.
\end {lemma}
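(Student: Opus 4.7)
The plan is to combine $P_3$ for $\mathscr G(\textbf{c})$, just established in Lemma \ref{7.6.2}, with $P_{10}$ for $\mathscr G(\textbf{c}^-)$, the inductive hypothesis. By Lemma \ref{7.6.1}(i) it suffices to treat a bivalent path $p = (v_1, \ldots, v_n)$ in $\mathscr G(\textbf{c})$ of value $k \in \hat T$. Since $P_3$ holds in $\mathscr G(\textbf{c})$, a potential $f$ on vertices is well-defined up to an additive constant by $f_v - f_{v'} = c_{v,v'}(r^{i_v} - r^{i_{v'}})$, and then $S_p(v_1,v_n) = f_{v_n} - f_{v_1}$; the desired $P_{10}$ thus reduces to showing that the coefficient of $r^k$ in $f_{v_n} - f_{v_1}$ vanishes whenever $i_{v_1} = i_{v_n} = k$.

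The next step is to express $f$ in terms of the potential $f^-$ on $\mathscr G(\textbf{c}^-)$. I would fix $u_0 \in \mathscr G^+$ with $i_{u_0} = s+1$ and put $w_0 = \varphi(u_0) \in \mathscr G^-$, so $i_{w_0} = s$; normalising $f_{u_0} = 0$ forces $f_{w_0} = c_s(r^s - r^{s+1})$ through the crossing edge. The isomorphism $\Psi^+$, with identifications $c^-_j \leftrightarrow c_{\psi_s(j)}$ and $\tilde r^j \leftrightarrow r^{\psi^s(j)}$, yields $f_v = F^+(v)$ for $v \in \mathscr G^+$, where $F^+(v)$ denotes the substituted form of $f^-_{(\Psi^+)^{-1}(v)}$. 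Similarly $\Psi^-$, with $\tilde r^j \leftrightarrow r^{\psi^{s+1}(j)}$, yields $f_v = F^-(v) + c_s(r^s - r^{s+1})$ for $v \in \mathscr G^-$, the additive constant encoding the displacement $f_{w_0} - F^-(w_0) = c_s(r^s - r^{s+1}) - 0$.

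The proof would then split into cases by the location of $v_1$ and $v_n$. If both lie in $\mathscr G^+$ (forcing $k \neq s$), the coefficient of $r^k$ in $F^+(v_n) - F^+(v_1)$ matches the coefficient of $\tilde r^{(\psi^s)^{-1}(k)}$ in $f^-_{v_n''} - f^-_{v_1''}$, where $v_j'' := (\Psi^+)^{-1}(v_j)$ both carry index $(\psi^s)^{-1}(k)$; inductive $P_{10}$ for $\mathscr G(\textbf{c}^-)$ yields the vanishing. The case of both in $\mathscr G^-$ (forcing $k \neq s+1$) is symmetric via $\Psi^-$, with the constants $c_s(r^s - r^{s+1})$ cancelling in the difference. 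For the mixed case, necessarily $k \notin \{s,s+1\}$, and here I would invoke the identity $(\psi^s)^{-1}(k) = (\psi^{s+1})^{-1}(k)$ valid off $\{s,s+1\}$: the difference $f_{v_n} - f_{v_1} = F^-(v_n) - F^+(v_1) + c_s(r^s - r^{s+1})$ has coefficient of $r^k$ equal to that of $\tilde r^{k''}$ in the single expression $f^-_{v_n''} - f^-_{v_1''}$ on $\mathscr G(\textbf{c}^-)$ with both endpoints sharing index $k'' = (\psi^s)^{-1}(k)$, to which $P_{10}$ again applies.

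The hard part will be the mixed case, in which no path can stay on a single side and the crossing correction $c_s(r^s - r^{s+1})$ enters the description of $f$ non-trivially. The key observation making the argument go through is that this correction only affects coefficients of $r^s$ and $r^{s+1}$, while the coincidence $(\psi^s)^{-1}(k) = (\psi^{s+1})^{-1}(k)$ for $k \notin \{s,s+1\}$ ensures that the $\Psi^+$- and $\Psi^-$-descriptions of $f$ align on $r^k$, so that $P_{10}$ for $\mathscr G(\textbf{c}^-)$ can be invoked coherently across the two halves of $\mathscr G(\textbf{c})$.
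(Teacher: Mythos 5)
Your proof is correct, but it takes a genuinely different route from the paper's. The paper stays at the level of path sums: it reduces to a bivalent path via Lemma \ref{7.6.1}(i), projects the path onto $\mathscr G^-$ by replacing each $v''\in\mathscr G^+$ with $\varphi(v'')$ (collapsing the crossing edges while preserving the labels on the surviving edges), and then invokes Lemma \ref{7.6.1}(ii) --- which identifies the $r^k$-coefficient of a bivalent path sum with the difference of the first and last edge labels --- together with $P_{10}$ for $\mathscr G(\textbf{c}^-)$. You instead exploit $P_3$ for $\mathscr G(\textbf{c})$ (legitimately available, since the preceding lemma has the same hypotheses) to replace $S_p(v_1,v_n)$ by the potential difference $f_{v_n}-f_{v_1}$, so that only the endpoints matter and bivalence is never actually needed; you then describe the potential explicitly on each half as a relabelled copy of $f^-$, shifted by $c_s(r^s-r^{s+1})$ on $\mathscr G^-$, and read off the $r^k$-coefficient using that the shift only involves $r^s,r^{s+1}$ and that $(\psi^s)^{-1}$ and $(\psi^{s+1})^{-1}$ agree off $\{s,s+1\}$. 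Your version buys a cleaner case analysis (by location of the endpoints rather than by surgery on the path) and in particular sidesteps a delicate point in the paper's argument, namely that the projected path remain bivalent --- which is not obvious when $k\in\{s,s+1\}$, since interior vertices of index $s$ survive the projection; the description of $f$ on $\mathscr G^\pm$ that you set up is essentially the computation the paper only carries out later, in Lemma \ref{7.8}(i),(ii). The one cosmetic remark is that your opening reduction to bivalent paths is superfluous in your framework.
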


\begin {proof}  By Lemma \ref {7.6.1} we can assume that the path $p$ from $v$ to $v'$ is bivalent of value $k$ and non-trivial.  Then for any component of the sum lying in $\mathscr G^+$ we can replace each vertex $v'' \in \mathscr G^+$ by $\varphi(v'') \in \mathscr G^-$.  Then we obtain a new path $p'$ from $v$ to $v'$ lying entirely in $\mathscr G^-$.  In this case the vertices carrying the index $s+1$ are removed whilst the indices on the remaining vertices are unchanged.  In particular $p'$ is bivalent and non-trivial. Again the edges with index $s$ are correspondingly removed.  Yet unlike the situation described in the previous lemma, we \textit{need not} have $S_p(v,v')=S_{p'}(v,v')$.   However the indices over the remaining edges remain the same by virtue of \ref {7.2}$(*)$.  Now $\mathscr G^-$ is, up to a change of labelling defined by $\Psi^-$, isomorphic to $\mathscr G(\textbf{c}^-)$.  Hence $P_{10}$ for the latter implies by Lemma \ref {7.6.1} (and the Remark) that indices on the end vertices of $p'$ are both equal to some $i \in T \setminus \{s\}$.

We conclude that the indices on the end vertices in $p$ are either both equal to $i$ above or both equal to $s$.  Since $p$ is bivalent, the assertion follows from Lemma \ref {7.6.1}.
\end {proof}

\subsection{}\label{7.7}

\begin {cor} $\mathscr G(\textbf{c})$ is an $S$ graph.
\end {cor}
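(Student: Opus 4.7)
The plan is to prove the corollary by straightforward induction on $t = |T|$, assembling the properties $P_1$--$P_7$ from the lemmas already established in this section. To keep the induction working, however, the hypothesis must be strengthened: in addition to ``$\mathscr G(\textbf{c}^-)$ is an $S$-graph'' one must carry $P_{10}$ along as an extra inductive invariant, because the proof of $P_3$ given in Lemma \ref{7.6.2} requires $P_{10}$ on the smaller graph, not merely $P_3$.

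The base case $t=1$ is the graph from \ref{7.1}: two vertices labelled $1,2$ joined by an edge labelled $1$. Conditions $P_1,P_2,P_5$ are immediate from the description, $P_3$ holds because the graph is a tree, $P_4$ is clear, and $P_6,P_7$ are vacuous since there are no triads and only two vertex-labels exist. For $P_{10}$ one observes via Lemma \ref{7.6.1}(i) that it suffices to check bivalent paths: the only non-trivial such path in the base graph is $v_1\to v_2\to v_1$, for which the sum $c_1(r^{v_2}-r^{v_1}) + c_1(r^{v_1}-r^{v_2})$ is identically zero, hence in particular independent of $r^{i_{v_1}}$.

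For the inductive step, assume $\mathscr G(\textbf{c}^-)$ is an $S$-graph satisfying $P_{10}$, and form $\mathscr G(\textbf{c})$ by the binary fusion construction of \ref{7.2}. Properties $P_1,P_2,P_5$ are built into the construction; $P_4$ is Lemma \ref{7.3}; $P_6$ (together with the refinement that the triad-order lifts to $\textbf{c}$) is Lemma \ref{7.4}; and $P_7$ is Lemma \ref{7.5}, whose proof crucially uses that $c_s$ is the maximal element of $\textbf{c}$ so that the new crossing edges with label $s$ always extend an ordered path. Property $P_3$ then follows from the first Lemma \ref{7.6.2}, and $P_{10}$ for $\mathscr G(\textbf{c})$ follows from the second Lemma \ref{7.6.2}, which closes the induction.

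The real obstacle here is not the bookkeeping of $P_1$--$P_7$, which is essentially automatic once the lemmas are in hand, but the observation that the natural invariant to propagate is $P_{10}$ rather than $P_3$ alone. Without $P_{10}$ one cannot reduce a closed path in $\mathscr G(\textbf{c})$ to one entirely in $\mathscr G^-$: the crossing pairs of edges labelled $s$ between $\mathscr G^+$ and $\mathscr G^-$ must cancel, but the path segments inside $\mathscr G^+$ must also be transportable, via $\varphi$, to $\mathscr G^-$ without changing the value of the path sum, and it is exactly $P_{10}$ that guarantees this transport succeeds because the replaced vertices all share the removed label $s+1$ at their endpoints. Once this strengthened inductive setup is in place, the corollary is just the compilation of Lemmas \ref{7.3}--\ref{7.6.2}.
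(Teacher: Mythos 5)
Your proposal is correct and follows essentially the same route as the paper: the corollary is precisely the compilation of \ref{7.1} (base case), \ref{7.2} ($P_1,P_2,P_5$), Lemmas \ref{7.3}--\ref{7.5} ($P_4,P_6,P_7$), and the two lemmas of \ref{7.6} ($P_3$ via the auxiliary invariant $P_{10}$), which is exactly Lamprou's ``first, direct'' proof that the paper adopts at this point. Your observation that $P_{10}$ must be carried as an extra inductive hypothesis is the same device the paper builds into \ref{7.6}, so there is nothing to add.
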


\subsection{Positivity and $P_9$}\label{7.8}

Fix a total order $\textbf{c}$ and recall \ref {4.2} the notion of positivity in the sector defined by $\textbf{c}$.  Since $\mathscr G(\textbf{c})$ is an $S$ graph we can assign to each $v \in V(\mathscr G(\textbf{c}))$ a function $f_v \in \hat{H}$.  Let $f^*_v$ be the function defined by viewing $v$ as an element of $V(\mathscr G(\textbf{c})^*)$.

 Recall the graph isomorphism $\varphi: \mathscr G^+ \rightarrow \mathscr G^-$.  Set $V^\pm=V(\mathscr G^\pm)$.  One has $\varphi(V^+)=V^-$.  Set $V^+_{s+1}:=\{v \in V^+|i_v=s+1, V^-_{s}:=\{v \in V^-|i_v=s\}$.  Clearly $\varphi(V^+_{s+1})=V^-_s$.

\begin {lemma}

\

(i)  For all $v \in V^+_{s+1}$ one has
$$f_{\varphi(v)}-f_v=c_s(r^s-r^{s+1}).$$

\

(ii) For all $v \in V^+ \setminus V^+_{s+1}$ there exists $c' \in \textbf{c}^-$ such that
$$f_{\varphi(v)}-f_v=(c_s-c')(r^s-r^{s+1}).$$

\

(iii) $f_v$ and $f^*_v$ satisfy positivity for all $v \in \mathscr G(\textbf{c})$.

\

(iv)  $\mathscr G(\textbf{c})$  satisfies $P_9$.
\end {lemma}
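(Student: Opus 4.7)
The plan is to dispose of (i) from $P_3$ essentially by fiat, to prove (ii) via a carefully chosen path (the main computation), and then to deduce (iii) and (iv) either from the inductive hypothesis on $\mathscr G(\textbf{c}^-)$ or directly from (i) and (ii).

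For (i), by construction $\varphi(v)$ is obtained from $v \in V^+_{s+1}$ via the edge of label $s$ adjoined in \ref{7.2}, with $i_{\varphi(v)} = s$ and $i_v = s+1$. Hence $P_3$ (Corollary \ref{7.7}) gives $f_{\varphi(v)} - f_v = c_s(r^s - r^{s+1})$ at once.

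For (ii), the idea is to build an explicit path from $v$ to $\varphi(v)$ in $\mathscr G(\textbf{c})$ whose $P_3$-contributions collapse to the asserted form. Setting $v_0 = (\Psi^+)^{-1}(v)$, the hypothesis $v \notin V^+_{s+1}$ gives $i_{v_0} \neq s$ in $\mathscr G(\textbf{c}^-)$. Using connectedness of $\mathscr G(\textbf{c}^-)$ (inductive $P_5$, Lemma \ref{6.5}), I would select a \emph{shortest} path $q$ from $v_0$ to a vertex of label $s$; by minimality no interior vertex of $q$ carries label $s$. Let $u_0$ denote its endpoint and $c'$ the coefficient labelling its last edge, so $c' \in \textbf{c}^-$. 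Transporting $q$ through $\Psi^+$ and $\Psi^-$ yields mirror paths $p^+$ in $\mathscr G^+$ (from $v$ to $w := \Psi^+(u_0)$) and $p^-$ in $\mathscr G^-$ (from $\varphi(v)$ to $\varphi(w)$), sharing all edge labels. By the relabelling formula (4) in \ref{7.2}, interior vertices of $p^+$ and $p^-$ carry identical labels (none equals $s+1$ in $\mathscr G^+$); only the terminal vertex differs, with $i_w = s+1$ against $i_{\varphi(w)} = s$. Concatenating $p^+$ with the crossing edge $w \to \varphi(w)$ and with the reverse of $p^-$, and summing through $P_3$, every pair of matching interior-edge contributions cancels, and what remains is
$$f_{\varphi(v)} - f_v = c'(r^{s+1} - r^{i_{w_0}}) + c_s(r^s - r^{s+1}) + c'(r^{i_{w_0}} - r^s) = (c_s - c')(r^s - r^{s+1}),$$
where $w_0$ is the penultimate vertex of $p^+$.

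For (iii), I would compare $\mathscr G(\textbf{c})$ with $\mathscr G(\textbf{c}^-)$ via the substitution $(r^-)^j \mapsto r^{\psi^s(j)}$ and $c^-_j \mapsto c_{\psi_s(j)}$ induced by $\Psi^+$. A short computation gives that $h^-$ transforms to $h + c_s m^s$, so for $v = \Psi^+(v_0) \in V^+$ the function $f_v$ equals the transform of $f_{v_0}^-$ minus $c_s m^s$. The substitution sends each basis difference $(r^-)^j - (r^-)^{j+1}$ either to a single $r^i - r^{i+1}$ or, when $j = s-1$, to $(r^{s-1} - r^s) + (r^s - r^{s+1})$; hence non-negativity of coefficients is preserved. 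The inductive positivity of $f_{v_0}^-$ then yields positivity of $f_v$ on $V^+$. For $v \in V^-$, positivity propagates from $\varphi^{-1}(v) \in V^+$ through (i) or (ii), using $c_s \geq c'$. Positivity of $f_v^*$ is obtained by applying the same argument to the dual graph $\mathscr G(\textbf{c})^* = \mathscr G(\textbf{c}^*)$, whose construction is $*$-equivariant by inspection.

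For (iv), I would distinguish three cases. If $v, v'$ both lie in $V^+$, or both in $V^-$, the common $c_s m^s$ correction cancels in $f_v - f_{v'}$ and inductive $P_9$ for $\mathscr G(\textbf{c}^-)$, applied to the corresponding $v_0 \neq v_0'$, rules out coincidence. If $v \in V^+$ and $v' = \varphi(v) \in V^-$, then (i)/(ii) give $f_{v'} - f_v = (c_s - c')(r^s - r^{s+1})$ with $c_s > c'$ \emph{strictly}, since $c_s$ is the unique maximum of $\textbf{c}$ and $c' \in \textbf{c}^- \cup \{0\}$. In the remaining mixed case with $v_0 \neq v_0'$, the $c_s$-coefficient of $f_v - f_{v'}$ is a non-zero multiple of $r^s - r^{s+1}$, whereas the $\mathscr G(\textbf{c}^-)$-component of the difference does not involve $c_s$; cancellation is impossible. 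The main obstacle lies in (ii): an \emph{arbitrary} ordered path from $v$ to a label-$(s+1)$ vertex in $\mathscr G^+$ would produce a telescoping correction of shape $c' - \sum_{i=1}^{m-1}(a_i - a_{i-1})\delta_i$ in place of a single $c' \in \textbf{c}^-$. The shortest-path choice annihilates all the interior $\delta_i$, which is precisely what makes the sum collapse to the required single-element expression.
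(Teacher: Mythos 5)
Your proposal is correct and takes essentially the same route as the paper: (i) is the evaluation property on the crossing edge, (ii) is the same concatenation $v\to\cdots\to w\to\varphi(w)\to\cdots\to\varphi(v)$ with telescoping cancellation (your shortest-path device merely makes explicit the paper's assertion that only the terminal vertex of the $\mathscr G^+$-segment lies in $V^+_{s+1}$), and (iii), (iv) are deduced from (i), (ii), the induction hypothesis and duality just as in the paper, with more of the bookkeeping written out. The only slight imprecision is in (iv) for two vertices of $V^-$, where the corrections are not literally ``common'' since $c'$ varies with the vertex; but computing the difference along a path inside $\mathscr G^-$, a relabelled copy of $\mathscr G(\textbf{c}^-)$, gives the separation directly.
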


\begin {proof}
Take $v \in V^+_{s+1}$.  Then by \ref {6.6}$(*)$, we obtain
$$f_{\varphi(v)}=f_v+c_{v,\varphi(v)}(r^{i_{\varphi(v)}}-r^{i_v})=c_s(r^{s}-r^{s+1}).$$

Assume $v \in V^+ \setminus V^+_{s+1}$.  Then there is a path   $v=v_1 \rightarrow v_2 \rightarrow \ldots \rightarrow v_n$ with just the last element $v_n \in V^+_{s+1}$  in $\mathscr G^+$.   Concatenate this with the path $v_n \rightarrow \varphi(v_n) \rightarrow \varphi(v_{n-1})\rightarrow \ldots \rightarrow \varphi(v_1)$.

Through the formulae in \ref {7.2}$(3,4)$, we obtain
$$f_{\varphi(v)}-f_v=f_{\varphi(v_{n-1})}-f_{v_{n-1}}= (c_s-c_{v_{n-1},v_n})(r^s-r^{s+1}).$$

Thus (ii) results with $c'= c_{v_{n-1},v_n}$.  Of course the assertion that this result is independent of the choice of path is a consequence of $P_3$.

Recall that $c_s>c'$, for all $c' \in \textbf{c}^-$.  Then (iii) results from (i), (ii), the corresponding property for $\mathscr G(\textbf{c}^-)$ and duality.

For (iv) recall the notion of support of a function (\ref {4.6}).  Since no edge of  $\mathscr G^+$ has label $s$, it is immediate that $s\notin \Supp f_v$, for all $v \in \mathscr G^+$.  On the other hand by (ii) one has $s\in \Supp f_v$, for all $v \in \mathscr G^-$.   Consequently $P_9$ obtains by induction on $t$.

\end {proof}

\subsection{Trees}\label{7.9}

Recall \ref {7.2}.  We may also construct $S$-graphs inductively by taking $c_s$ to be the unique minimal element of $\textbf{c}$.  Paradoxically we get just one tree for every value of $t$.  If we include duality at each step then we can obtain $2^t$ trees.  As labelled graphs they are pairwise distinct.  Removing the labelling they become all isomorphic,  The construction is described briefly below.

As before set $\textbf{c}^-=\textbf{c}\setminus \{c_s\}$, but now with $c_s$ being the unique minimal element of $\textbf{c}$.  Assume that the $S$-graph $\mathscr G(\textbf{c}^-)$ has been determined.  Increase the index on every vertex and on every edge of $\mathscr G(\textbf{c}^-)$ by $1$.  Then to every vertex of $\mathscr G(\textbf{c}^-)$ adjoin an edge with label $1$ equipped with a second vertex having label $1$.  It is easy to check that the resulting graph $\mathscr G(\textbf{c})$ is an $S$-graph.  Moreover the new triads take the form $(a,b,c,d)$ with $i_a=i_d=1, i_{a,b}=i_{c,d}=1, i_{b,c}=j>1$.  In addition \textit{every} $j \in T\setminus \{1\}$ so obtains.  This every linear order on $\textbf{c}^-$ gives just one linear order on $\textbf{c}$ in contrast to the case when $c_s$ is the unique maximal element.

Applying this construction inductively starting at $t=1$ we obtain a tree with $2^t$ vertices in which every vertex with label $j \in T$ has $2^{t-j}$ vertices, as well as a single vertex with label $t+1$.  In this ``increasing coefficient'' case the corresponding tableaux (excepting that for $h$) has height $1$.  Moreover this construction is easy to describe in terms of tableaux.

At each step in the above construction we may also dualize.  For example dualizing at the last step corresponds to the ``decreasing coefficient case".  This leads to $2^t$ graphs which are pairwise distinct because they correspond to different linear orders and therefore (see above) by pairwise distinct sets of triads.  However as unlabelled graphs it is immediate that they are all isomorphic to a single specific tree.

\subsection{Octagons}\label{7.10}

One may be surprised that not all the (unlabelled) graphs $G(\textbf{c})$ are trees.  That one cannot simply permute indices on edges to translate these graphs amongst themselves originates from the fact that condition $P_4$ has to hold.  Moreover one cannot corresponding permute indices on vertices to compensate for the permutation of indices on edges because the indexing sets namely $T$ and $\hat{T}$ differ in cardinality by $1$.  Seemingly only duality (which one may prefer to call inversion) can act on both of these sets.

Octagons first appear when $|T|=3$.   In this case there are $4$ trees provided by the previous construction. The set of vertices is of cardinality $2^3=8$ breaking into subsets specified by the indices on vertices of cardinalities $(4,2,1,1)$.  The remaining two graphs are isomorphic and self-dual.  The common graph is an octagon and the subsets of vertices having a fixed index has $(2,2,2,2)$ as its set of cardinalities.

From the construction (see \ref {7.2}) it is immediate that the cardinality of a vertex in $\mathscr G(\textbf{c})$ having a fixed index is always a power of $2$ and moreover this cardinality can be calculated from the linear order $\textbf {c}$.  Of course as one runs over the indices on vertices this set of powers of $2$ must sum to $2^{|T|}$.  More interestingly it seems likely that this set of powers of two determines the isomorphism class of the unlabelled graph.

Since unlabelled graphs are not isomorphic we see another reason why the  $\mathscr G(\textbf{c})$ cannot be permuted by the symmetric group $S_t$.  However to each isomorphism class of unlabelled graphs one may a subset of $S_t$ by assigning the identity element of $S_t$ to the increasing coefficient case. Of course this is hardly canonical and indeed we have not been able to make much sense of the result.

\section{Relation to Tableaux}\label{8}

\subsection{}\label{8.1}

In this section we return to tableaux.  Their interest for the structure of $B(\infty)$ was discussed in the last part of \ref {4.2}.
%Concerning the structure of $B(\infty)$ it is possible that we could dispense with tableaux although we have yet to establish $P_8$ for the $S$-graphs we construct. Moreover this condition was one of the reasons why we considered tableaux in the first place and it seems unlikely that we could have arrived at $S$-graphs without them.
Again tableaux have the advantage that one can easily read off a function from its (complete) tableaux as well as the sector to which it belongs, rather than having to know the entire $S$-graph whose structure is hard to discern.

 A further interest in tableaux does not directly concern $B(\infty)$.  In a subsequent paper we show that the set $H^t$ of equivalence classes of tableaux with $t$ columns is a ``Catalan set'', that is to say $|H^t|=C(t)$, where $C(t)=\frac{(2t)!}{t!(t+1)!}$ is the $t^{th}$ Catalan number.

 There has been a considerable interest in Catalan sets and indeed a huge number of Catalan sets is known.  Yet $H^t$ lacks the left-right symmetry which is common for most Catalan sets. The closest we found to our present example was provided by the set of ideals $\mathscr I^t$ of strictly upper triangular $t\times t$ matrices stable under the conjugation by the diagonal matrices.  It is a Catalan set.  Yet already for $t=3$ the graph $G_{H^t}$ of links of $H^t$ is different to the graph $G_{\mathscr I^t}$ of inclusion of ideals.

More importantly every linear order $\textbf{c}$ on the $t-1$ element set $\{1,2,\ldots,t-1\}$ gives rise to a connected subgraph $G_{H^t(\textbf{c})}$ of $G_{H^t}$. These are not quite all distinct so we obtain (a little less than) $(t-1)!$ distinct isomorphism classes of labelled graphs.  It follows from Theorem \ref {8.5} that the cardinality of $H^t(\textbf{c})$ is always $2^{t-1}$ and their common intersection has cardinality $t$.  The nearest comparison that we found to this arises when one considers the decomposition of $\mathscr I^t$ into commutative ideals and into nilradicals of parabolics, both subsets having cardinality $2^{t-1}$ with common intersection of cardinality $t$. However even of for $t=3$ when this decomposition might seem exactly the same (because $(t-1)!$ happens to be $2$ in this case) the subgraph of nilradicals is not a \textit{connected} subgraph of $G_{\mathscr I^t}$, that is $P_5$ fails.  For $t=4$ the difference between $G_{H^t}$ and $G_{\mathscr I^t}$ is even greater - the former looks like a four-armed octopus and the latter an advertisement for ``dropbox''.   

%has maximal chain length $6$ of tableaux with decreasing numbers of blocks, whereas for the inclusion order of ideals ideals the maximal chain length is $7$.

Of course we would have liked to have obtained a relationship between $H^t$ and $\mathscr I^t$ since the latter has an elegant description obtained through the classical unpublished work of Peterson (see \cite {K}, \cite {PP1} and \cite {PP2}) using the affinisation of the symmetric group $S_{t-1}$.  At present we lean more towards the opinion that no such connection exists.

We shall need the following almost trivial result.

Fix $s \in T$ and let $H(\textbf{c}\setminus \{c_s\})$ denote the set of all $\{f \in H(\textbf{c})| c_s \notin \Supp (f-h)\}$.  Now suppose $c_s$ to be the unique largest element of $\textbf{c}$ and adopt the notation of \ref {7.2}.

\begin {lemma} $(\psi_s \times \psi^s)H(\textbf{c}^-_s)=H(\textbf{c}\setminus \{c_s\})$.
\end {lemma}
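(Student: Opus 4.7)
The plan is to identify $(\psi_s \times \psi^s)$ geometrically as the operation of inserting an empty column at position $s$ in a tableau (while shifting the column labels and coefficient indices $\geq s$ up by one), and to check that this insertion is transparent to each of the three defining ingredients of the two sets in question: the equivalence class structure, the function $f$, and the partial order $P$.

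First I would verify well-definedness on equivalence classes. Each of the three equivalence generators---domino adjunction or removal, half-domino adjunction or removal, and equal-adjacent-row cancellation---acts on columns that currently contain blocks; the inserted empty column $C_s$ (of height zero) never participates in any left/right neighbour relation at a positive level nor contributes to any row's block count, so inserting an empty column commutes with all these operations. Consequently $(\psi_s \times \psi^s)$ descends to a well-defined injective map $H^t \to H^{t+1}$ whose image consists of the classes whose deplete representative has $C_s$ empty. For the function, by the row-sum formula \ref{4.4}$(*)$, every summand of $f_\mathscr T - h$ is of the form $c_{b(u_i,s)}(r^{u_i} - r^{u_{i\pm1}})$ with $u_i \neq s$ whenever $C_s$ is empty, so $c_s \notin \Supp(f_\mathscr T - h)$; conversely, Lemma \ref{4.6} applied to the deplete representative shows that $c_s \notin \Supp(f_\mathscr T - h)$ forces $C_s$ empty in the deplete, placing $\mathscr T$ in the image of $(\psi_s \times \psi^s)$ by collapsing $C_s$ per \ref{4.7}.

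For the partial order, the relations in $P(\mathscr T^c)$ from \ref{3.2.1} depend only on left/right neighbour relationships at various levels, and an empty column $C_s$ is invisible to these. For any representative with $C_s$ empty, the relations are in bijection via $\psi_s$ with those of $P(\mathscr T^{-,c})$, so liftability to $\textbf{c}$ corresponds to liftability to $\textbf{c}^-_s$. In the delicate case where the complete representative $\mathscr T^c$ has $C_s$ filled in by completion dominoes, any relation in $P(\mathscr T^c)$ which involves $c_s$ must be of the form $c_s > c_k$---compatible with $c_s$ being the unique maximum of $\textbf{c}$---never the reverse, which would contradict maximality of $c_s$ in any valid linear extension. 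So liftability of $P(\mathscr T^c)$ to $\textbf{c}$ still reduces cleanly to liftability of $P(\mathscr T^{-,c})$ to $\textbf{c}^-_s$. Combining the three steps yields the asserted equality of sets.

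The main obstacle is the partial order step, where one must check carefully that completion dominoes filling in $C_s$ cannot generate relations beyond those already encoded by the maximality of $c_s$ in $\textbf{c}$. Once this point is settled the other two steps are direct from the row-sum formula, Lemma \ref{4.6}, and the collapsing procedure of \ref{4.7}.
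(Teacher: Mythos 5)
Your proof is correct and follows essentially the same route as the paper: identify $(\psi_s\times\psi^s)$ with insertion of an empty column at $x$ co-ordinate $s$, use Lemma \ref{4.6} together with collapsing to characterize the image as the classes with $c_s\notin\Supp(f-h)$, and observe that the only new relations created on completion have $c_s$ as the larger element and so are absorbed by its maximality in $\textbf{c}$. The paper's own proof asserts this last (delicate) point about the additional relations at essentially the same level of detail as you do, flagging it in a remark rather than verifying it in full.
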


\begin {proof} Given a tableau $\mathscr T^-$ representing an element $f^- \in H(\textbf{c}_s^-)$ (in which by \ref {3.1} the rule \ref {7.2}$(1)$ applies) we may obtain a tableau $\mathscr T$ representing an element $f \in H$ by creating an empty column $C_s$ with $x$ co-ordinate $s$ and viewing the columns of $\mathscr T$ with $x$-co-ordinate $i\geq s$ as now having $x$ co-ordinate $i+1$. Notice that there are new relations introduced on passing to $P(\mathscr T)$ and these are of the form $c_i <c_s:i <s$.  Since $c_s$ was chosen to be the unique maximal element of $\textbf{c}$, we still obtain $\mathscr T \in H(\textbf{c})$, except it has an empty column with $x$ co-ordinate $s$.  Conversely given $\mathscr T \in H(\textbf{c})$ with an empty column with $x$ co-ordinate $s$ it may be collapsed to an element of $H(\textbf{c}^-_s)$.

 On the other hand $f$ is obtained from $f^-$ by replacing $r^i:i\geq s$ by $r^{i+1}$ and $c_i:i \geq s$ by $c_{i+1}$. These are just the maps $\psi^s$ and $\psi_s$ defined in \ref {7.2}$(2)$.

  Finally apply Lemma \ref {4.6} recalling \ref {2.3.7}.
\end {proof}

\textbf{Remark}.  Of course we do not need to be so pedantic over this relabelling, but there is a delicate point concerning the additional relations which are introduced.

\subsection{}\label{8.2}

Recall \ref {3.3.3} and \ref {5.3} in which $\mathscr G_H$ is defined. Fix a linear order $\textbf {c}$ and let $G_{H(\textbf{c})}$ be the subgraph of $\mathscr G_H$ whose vertices are the elements of $H(\textbf{c})$. Let $c_s$ be the unique maximal element of $\textbf{c}$.

Let $\mathscr G, \mathscr G'$ be graphs.  We define a graph embedding $\theta: \mathscr G \hookrightarrow \mathscr G'$ to be an injective map $\theta: V(\mathscr G )\hookrightarrow V(\mathscr G')$ such that $(\theta(v),\theta(v'))$ is an edge of  $\mathscr G'$ whenever $(v,v')$ is an edge of $\mathscr G$.

 \begin {prop} There exists a graph embedding $\theta:\mathscr G(\textbf{c})\hookrightarrow \mathscr G_{H(\textbf{c})}$ such that
 $$i_{\theta(v)}=i_v, \quad i_{\theta(v), \theta(v')}=i_{v,v'}, \forall v,v' \in \mathscr G(\textbf{c}). \eqno {(*)}$$

 In particular $P_9$ holds for $\mathscr G(\textbf{c})$.
 \end {prop}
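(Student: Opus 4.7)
The plan is to argue by induction on $t$, mirroring the binary-fusion construction of Section \ref{7.2}. The base case $t=1$ is immediate: both $\mathscr{G}(\textbf{c})$ and $\mathscr{G}_{H(\textbf{c})}$ reduce to a pointed chain on two vertices labeled $1$ and $2$, joined by a single edge labeled $1$, so the embedding is forced by Lemma \ref{6.3}.

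For the inductive step, let $c_s$ be the unique maximal element of $\textbf{c}$, set $\textbf{c}^-=\textbf{c}\setminus\{c_s\}$, and invoke the induction hypothesis to obtain an embedding $\theta^-:\mathscr{G}(\textbf{c}^-)\hookrightarrow\mathscr{G}_{H(\textbf{c}^-)}$. Write $V(\mathscr{G}(\textbf{c}))=V^+\sqcup V^-$ as in \ref{7.2}. On $V^+$ I would define $\theta$ by composing $\theta^-$ with the column-insertion map of Lemma \ref{8.1}: for $v^-\in V(\mathscr{G}(\textbf{c}^-))$, take $\theta(\Psi^+(v^-))$ to be the class in $H(\textbf{c}\setminus\{c_s\})\subset H(\textbf{c})$ represented by the tableau $\theta^-(v^-)$ with an empty column inserted at position $s$. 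The relabeling maps $\psi_s,\psi^s$ of \ref{7.2} coincide precisely with the shift in column indices induced by the insertion, so both vertex- and edge-labels on $\mathscr{G}^+$ are preserved.

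On $V^-$ the strategy is to realise each vertex via its $f$-value: for $v=\varphi(w)\in V^-$ Lemma \ref{7.8}(i),(ii) gives $f_v$ as an explicit modification of $f_{\theta(w)}$, and Lemma \ref{7.8}(iii) guarantees the associated positivity for $\textbf{c}$; thus by Lemma \ref{4.7} there is at most one equivalence class in $H(\textbf{c})$ with this $f$-value, and I set $\theta(v)$ equal to it. To exhibit this class explicitly in the distinguished case $w\in V^+_{s+1}$, I would use single-block linkage from $\theta(w)$: passing to a suitable complete representative via Lemma \ref{2.3.4} arranges that the column $C_s$ of $\theta(w)$ becomes quasi-extremal of height one below the maximal height, Lemma \ref{3.3.2} then permits single-block adjunction at $C_s$, and rule \ref{3.1}(iii) forces the new block to carry the entry $s$, matching the edge label prescribed by the binary fusion. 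For arbitrary $v\in V^-$ I would propagate this construction along ordered paths in $\mathscr{G}^-$, using $P_3$ (which for $\mathscr{G}_H$ is \ref{5.5}$(*)$) to guarantee consistency. Injectivity and $P_9$ for $\mathscr{G}(\textbf{c})$ then follow together: by construction $f_{\theta(v)}=f_v$, the $f_v$'s are pairwise distinct on $V(\mathscr{G}(\textbf{c}))$ by the support argument in the proof of Lemma \ref{7.8}(iv), and Lemma \ref{4.7} translates this into distinctness of the corresponding classes in $H(\textbf{c})$.

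The main obstacle lies in the $V^-$ step: one must confirm that for every $w\in V^+_{s+1}$ some complete representative of $\theta(w)$ admits $C_s$ as a quasi-extremal column at exactly the height required by Lemma \ref{3.3.2}, so that single-block linkage genuinely produces a tableau in $H_s\cap H(\textbf{c})$ with $f$-value $f_{\theta(w)}+c_s(r^s-r^{s+1})$. This is a delicate balancing between the boundary conditions of Section \ref{2.2}, the completion procedure of Lemma \ref{2.3.4}, the entry rules of \ref{3.1}, and the positivity conditions from Lemma \ref{7.8}(iii); the analogous verification for non-extremal $w\in V^+$ then reduces to iterating the extremal case along ordered paths, using the uniqueness provided by Lemma \ref{4.7}.
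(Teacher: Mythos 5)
Your treatment of $V^+$ and of the cross edges $v\to\varphi(v)$, $v\in V^+_{s+1}$, matches the paper: the column-insertion map of Lemma \ref{8.1} handles $\mathscr G^+$ (the new relations being $c_i<c_s$, $i<s$, which hold since $c_s$ is maximal), and completion of the empty column $C_s$ followed by single-block adjunction realises the edges labelled $s$. But your handling of the remainder of $V^-$ has a genuine gap. You define $\theta$ on $V^-$ by prescribing the target $f$-value and appealing to Lemma \ref{4.7}; but Lemma \ref{4.7} gives only that \emph{at most one} class in $H$ has a given $f$-value --- it does not produce one. The existence, for every $v\in V^-$, of a tableau in $H(\textbf{c})$ realising $f_v$, together with the fact that every edge of $\mathscr G^-$ is realised by a genuine single-block linkage between the corresponding complete reduced tableaux, is precisely the hard content of the proposition, and your ``propagation along ordered paths'' leaves it unproved --- you yourself flag it as ``the main obstacle''. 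Note also that $\mathscr G^-$ need not be a tree (octagons already occur for $t=3$, see \ref{7.10}), so even well-definedness of the propagation would require the existence and linkage statements at every step of every path.

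The paper closes this gap without any path-by-path verification, via the duality identity
$$(\Psi^+_{t+1-s}(\mathscr G(\textbf{c}^-)^*))^*= \Psi^-_s(\mathscr G(\textbf{c}^-))=\mathscr G^-,$$
checked by composing the three relabellings on vertices and on edges. This reduces the embedding of the whole of $\mathscr G^-$ to the same column-insertion argument applied to the dual order: the induction hypothesis gives an embedding of $\mathscr G(\textbf{c}^-)^*$ into $\mathscr G_{H(\textbf{c}^-)^*}$, inserting the empty column at $x$ co-ordinate $t+1-s$ introduces only the relations $c_{t+1-i}<c_{t+1-s}$ for $i>s$, and dualizing back turns these into $c_i<c_s$ for $i>s$, which again hold because $c_s$ is maximal. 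To complete your route you would have to prove the existence and linkage claim for every edge of $\mathscr G^-$ directly (compare \ref{8.7}, where even the height-one case is described as not easy); otherwise you should adopt the duality identity. As written, the argument is incomplete.
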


 \begin {proof} The proof is by induction on $t:= |\textbf{c}|$.  It can immediately be checked when $t=1$. The induction step uses binary fusion. Thus recall the notation and hypotheses of \ref {7.2} and assume the assertion holds for $\textbf{c}$ replaced by $\textbf{c}^-$.

 From the definition of $\mathscr G^+:= \Psi^+_s(\mathscr G(\textbf{c}^-))$, and taking account (see proof of Lemma \ref {8.1}) that the additional relations $c_i <c_s:i <s$ are satisfied, it follows from the induction hypothesis that we have an embedding $\theta$ of $\mathscr G^+$ into $G_{H(\textbf{c})}$ satisfying $(*)$.

 A similar result may be obtained for $\mathscr G^-$ through duality though it is a little more subtle.  In the sense of \ref {6.8} we claim that
 $$(\Psi^+_{t+1-s}(\mathscr G(\textbf{c}^-)^*))^*= \Psi^-_s(\mathscr G(\textbf{c}^-))=:\mathscr G^-.\eqno{(**)}$$

  Since $\mathscr G^\pm$ are both isomorphic as unlabelled graphs to $\mathscr G(\textbf{c}^-)$ and duality is also an isomorphism of unlabelled graphs, it is enough to verify that labels on vertices and edges coincide in the two sides.

  Consider first edges. In the left hand side we apply successively three operations, first duality with respect to $T\setminus \{t\}$ which is $j \mapsto {t-j}$.   The second given by $\psi_{t+1-s}$ of \ref {7.2}
 which can be written as
 $$t+1-(j+1)\mapsto \left\{\begin{array}{ll}t+1-(j+1)& :t+1-(j+1)<t+1-s, \\
   t+2-(j+1)&:t+1-(j+1)\geq t+1-s.\\

\end{array}\right.$$

The third is duality with respect to $T$ which can be written as $t+1-j \mapsto j$.

Combined these give

 $$j\mapsto \left\{\begin{array}{ll}j& :j<s, \\
   j+1&:j \geq s,\\

\end{array}\right.$$
which is just $\psi_s$, as required.

  A similar computation applies to vertices. In the left hand side we apply successively three operations, first duality with respect to $\hat{T}\setminus \{t+1\}$ which is $j \mapsto t+1-j$, the second given by $\psi^s$ in \ref {7.2} which can be written as

 $$t+1-j\mapsto \left\{\begin{array}{ll}t+1-j& :t+1-j<t+1-s, \\
   t+2-j&:t+1-j\geq t+1-s,\\

\end{array}\right.$$
and the third is duality on $\hat{T}$ which can be written as $t+2-j \mapsto j$.

Combined these give
$$j \mapsto \left\{\begin{array}{ll}j& :j < s+1, \\
   j+1&:j\geq s+1.\\

\end{array}\right. $$
which is just $\psi^{s+1}$, as required.

By the induction hypothesis we have an embedding of $\mathscr G(\textbf{c}^-)$ into $G_{H(\textbf{c}^-)}$ which dualizes to an embedding of $\mathscr G(\textbf{c}^-)^*$ into $G_{H(\textbf{c}^-)^*}$.  Take $\mathscr T^- \in H(\textbf{c}^-)^*$.  Then as above the operation $\Psi^+_{t+1-s}$ translates to introducing an empty column with $x$ co-ordinate $t+1-s$ in $\mathscr T^-$ and the new relations so introduced take the form $c_{t+1-i}<c_{t+1-s}$ for $i>s$. The last operation of taking duals, translate these to $c_i<c_s$ for all $i>s$. Since $c_s$ was chosen to be the unique maximal element of $\textbf{c}$, the resulting tableau lies in  $ H(\textbf{c})$.
 Thus we obtain an embedding of $\theta$ of $\mathscr G^-$ into $\mathscr G_{H(\textbf{c})}$ satisfying $(*)$.

 Let us describe the edges $v \rightarrow \varphi(v): v \in V^+_{s+1}$ indexed by $c_s$ constructed in \ref {7.2} in terms of operations on the corresponding tableaux.

For each vertex $v \in \mathscr G_{H_s(\textbf{c}^-)}$, let $\mathscr T_v^-$ be a complete tableaux representing the corresponding function $f_v^- \in H_{s}(\textbf{c}^-)$.
 %Let $T_v^-$  be a complete tableaux representing $f_v$.
  Let $\mathscr T_v$ be the tableau representing the function $f_v \in H(\textbf{c})$ obtained as in the first part.
  %from $f_v^-$ by the relabelling of indices given by $\psi^-$ in \ref {7.2}.

 Let $C_j: j \in \hat{T}$ be the columns of $\mathscr T_v$.  Since the strongly extremal column in $\mathscr T^-_v$ has $x$ co-ordinate $s$, the strongly extremal column of $\mathscr T_v$ is $C_{s+1}$.

   The height of $C_{s+1}$ is the height $u$ of $\mathscr T_v$. Moreover if $u$ is odd (resp. even) then $C_{s+1}$ has no neighbour to its left (resp. right) at level $u$.  Again since $\mathscr T_v^-$ is complete it follows from Lemma \ref {2.3.3} that  the height of $C_{s-1}$ is $\geq u-2$ (resp. $\geq u-1$).

   By construction $C_s$ is the empty column in $\mathscr T_v$.  Then we may adjoin $[\frac{u}{2}]$ vertical left dominoes to $C_s$ to bring it to height $u-1$ (resp. $u$) which is at most one greater than the height of $C_{s-1}$.  We conclude that the resulting tableau is the completion $\hat{\mathscr T}_v$ of $\mathscr T_v$.  Then we may add a single block $B$ to $C_s$ (resp. and a single block to $C_{s+1}$) to obtain a new tableau $\mathscr T_{\varphi(v)}$ corresponding to an element $f_{\varphi(v)} \in H_s$. Notably this new tableau always has odd height and is complete. Thus the entry in $B$ is $c_s$.  Since the $\mathscr T_{\varphi(v)}$ has odd height and both $C_s,C_{s+1}$ both have the height of $\mathscr T_{\varphi(v)}$, no new relations are introduced.  Furthermore  $f_{\varphi(v)}-f_v=c_s(r^s-r^{s+1})$.

   (Notice we may remove vertical right dominoes from $\mathscr T_{\varphi(v)}$ to reduce its column with $x$ co-ordinate $s+1$ to a single block. Then the passage from $\mathscr T_v^-$ to $\mathscr T_{\varphi(v)}$ given just above is creating a column of height $1$ at the $(s+1)^{th}$ place.  In this we can suppose that every column of $\mathscr T_{\varphi(v)}$ is non-empty ( by adding two full rows if necessary).  Then the passage from $\mathscr T_v^-$ to $\mathscr T_{\varphi(v)}$ by implementing $(**)$ gives exactly the same result.)

   This shows that the edges with labels $c_s$ linking $\mathscr G^\pm$ in $\mathscr G(\textbf{c})$, defined by our previous rather ad hoc manner in \ref {7.2}, arise from the labelled graph embeddings $\theta: \mathscr G^+, \mathscr G^- \hookrightarrow G_{H(\textbf{c})}$.

   This last part of the proposition follows from \ref {2.4} and \ref {5.4}.

   \end {proof}

   \subsection{}\label{8.3}

   We can now give a second proof that

   \begin {cor} $P_3$ holds for $\mathscr G(\textbf{c})$.
   \end {cor}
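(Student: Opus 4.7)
The plan is to deduce $P_3$ for $\mathscr{G}(\textbf{c})$ directly from Proposition \ref{8.2} by transporting the evaluation along the graph embedding $\theta$. Recall that $P_3$ asks for a map $v \mapsto f_v \in \hat{H}$ with $f_v - f_{v'} = c_{v,v'}(r^{i_v} - r^{i_{v'}})$ on every edge, and that $\mathscr{G}_H$ is already known to satisfy $P_3$ by the computation of \ref{5.5}$(*)$ (see also \ref{6.2}), with the evaluation given by the tableau functions $f_\mathscr{T}$.

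First I would define $f_v := f_{\theta(v)}$ for every $v \in V(\mathscr{G}(\textbf{c}))$, where $\theta : \mathscr{G}(\textbf{c}) \hookrightarrow \mathscr{G}_{H(\textbf{c})}$ is the label-preserving graph embedding produced by Proposition \ref{8.2}. For any edge $(v,v') \in E(\mathscr{G}(\textbf{c}))$, the pair $(\theta(v), \theta(v'))$ is by definition an edge in $\mathscr{G}_{H(\textbf{c})}$, and the identities $i_{\theta(v)} = i_v$ and $i_{\theta(v),\theta(v')} = i_{v,v'}$ from the proposition guarantee that the edge and vertex labels are preserved. Since $\mathscr{G}_H$ satisfies $P_3$,
\[
f_{\theta(v)} - f_{\theta(v')} = c_{\theta(v),\theta(v')}\bigl(r^{i_{\theta(v)}} - r^{i_{\theta(v')}}\bigr) = c_{v,v'}\bigl(r^{i_v} - r^{i_{v'}}\bigr),
\]
which is exactly the required identity.

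There is essentially no obstacle beyond invoking Proposition \ref{8.2}: the content of $P_3$ has already been absorbed into the tableau evaluation $f_\mathscr{T}$ (via Lemma \ref{4.5}, which ensures $f_\mathscr{T}$ is well-defined on equivalence classes, and the single-block linkage computation in \ref{5.5}), so that $P_3$ for $\mathscr{G}_H$ is automatic; all the work lies in constructing $\theta$. The only subtle step that had to be verified elsewhere is that consistency of this definition is independent of which complete tableau is chosen to represent a vertex, and this is guaranteed by Lemma \ref{4.5}. Consequently the corollary follows at once.
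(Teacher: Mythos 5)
Your proposal is correct and is essentially the paper's own argument: the paper's proof of this corollary likewise observes that $\mathscr G_H$ is an evaluation graph, hence so is any subgraph, and then transports $P_3$ to $\mathscr G(\textbf{c})$ through the label-preserving embedding $\theta$ of Proposition \ref{8.2}. You simply spell out the pullback $f_v := f_{\theta(v)}$ explicitly, which the paper leaves implicit.
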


   \begin {proof} Since $G_H$ is an evaluation graph, then a fortiori so is any subgraph (even if some edges may be missing).  Thus $P_3$ holds for $\mathscr G(\textbf{c})$ through the proposition.
   \end {proof}

   \subsection{}\label{8.4}

   A graph embedding $\theta :\mathscr G \hookrightarrow \mathscr G'$ is said to be strict if every edge between $(\theta(v),\theta (v'))$ in $\mathscr G'$  is the image of an edge $(v,v')$ in $\mathscr G$.

   \begin {lemma}

   \

   (i)   For all $v \in V(\mathscr G^+)$ the coefficient of $c_sm^j$ in $f_v$ is $-1$ if $j=s$ and \  $0$ otherwise.

   \

   (ii)   For all $v \in V(\mathscr G^-)$ the coefficient of $c_sm^j$ in $f_v$ is $-1$ if $j=s+1$ and \ $0$ otherwise.

   \

   (iii)  The graph embedding $\theta:\mathscr G(\textbf{c})\hookrightarrow \mathscr G_{H(\textbf{c})}$ is strict.
   \end {lemma}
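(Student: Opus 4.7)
The plan is to deduce (i) and (ii) as short coefficient extractions from the material of Sections \ref{7} and \ref{8}, and then to use them in (iii) as a diagnostic for recognising which side of the binary fusion a vertex lies on.

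For (i), note that by Proposition \ref{8.2} and Lemma \ref{8.1}, the image under $\theta$ of $V(\mathscr{G}^+) = \Psi^+_s(V(\mathscr{G}(\textbf{c}^-)))$ lies in $H(\textbf{c} \setminus \{c_s\})$; equivalently, $c_s \notin \Supp(f_v - h)$ for $v \in V(\mathscr{G}^+)$. Since $h = -\sum_{i \in T} c_i m^i$, the only contribution of $c_s m^j$ to $f_v$ comes from $h$, giving the asserted $-\delta_{j,s}$ pattern. For (ii), each $v \in V(\mathscr{G}^-)$ has the form $v = \varphi(v'')$ with $v'' \in V(\mathscr{G}^+)$, and Lemma \ref{7.8}(i),(ii) yields $f_v - f_{v''} = (c_s - c')(r^s - r^{s+1})$ for some $c' \in \textbf{c}^- \cup \{0\}$. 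Expanding $r^s - r^{s+1} = m^s + m^{s+1}$ and combining with (i) gives the claimed values.

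For (iii), proceed by induction on $t = |\textbf{c}|$, the base $t = 1$ being immediate. Consider an edge $e = (\theta(v), \theta(v'))$ in $\mathscr{G}_{H(\textbf{c})}$. Since $\mathscr{G}_H$ is an evaluation graph, $P_3$ gives $f_{\theta(v)} - f_{\theta(v')} = c_i(r^{i_v} - r^{i_{v'}})$ for some $i \in T$, and this difference depends on $c_s$ if and only if $i = s$. Parts (i) and (ii) determine the coefficients of $c_s m^j$ in each of $f_{\theta(v)}, f_{\theta(v')}$, hence recognise whether each endpoint lies in $V(\mathscr{G}^+)$ or $V(\mathscr{G}^-)$, reducing (iii) to three cases. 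In the $V^+$--$V^+$ case, both tableaux $\theta(v), \theta(v')$ have column $C_s$ empty, so by the numbering rules of \ref{3.1} no block of either carries the entry $s$; consequently the added block of the linkage has label $\neq s$, and the linkage descends under column collapse to a single block linkage in $\mathscr{G}_{H(\textbf{c}^-)}$. By induction and the strictness of $\theta$ for $\textbf{c}^-$, this edge is in the image of an edge of $\mathscr{G}(\textbf{c}^-)$, hence of $\mathscr{G}^+$. The $V^-$--$V^-$ case is treated dually via the identification $(\Psi^+_{t+1-s}(\mathscr{G}(\textbf{c}^-)^*))^* = \mathscr{G}^-$ of the proof of Proposition \ref{8.2}. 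In the crossing case, $P_3$ combined with (i), (ii) forces $i = s$, $i_v = s+1$, $i_{v'} = s$ (say $v \in V(\mathscr{G}^+)$), and the edge is precisely the fusion edge $v \to \varphi(v)$ added in \ref{7.2}.

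The main obstacle is the $V^+$--$V^+$ case: it must be verified that a single block linkage in $H(\textbf{c})$ between tableaux with empty $C_s$ column never introduces a block of entry $s$, so that the linkage genuinely restricts to an operation in $H(\textbf{c} \setminus \{c_s\})$ and descends to $H(\textbf{c}^-)$. This rests on \ref{3.1}(ii),(iii) together with the fact, visible from the height function of a complete tableau, that the entry $s$ can occur in a tableau only when $C_s$ is non-empty, so that emptiness of $C_s$ at both endpoints forces the added block to carry an entry $\neq s$.
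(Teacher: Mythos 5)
Your proofs of (i) and (ii) and your handling of the crossing case of (iii) follow the paper's own route. The paper obtains (i) slightly more directly --- no edge of $\mathscr G^+$ carries the label $s$, so along a path in $\mathscr G^+$ from the marked vertex $v_h$ the $c_s$-part of $f_v$ never changes from that of $h$ --- but your appeal to Lemma \ref{8.1} and Proposition \ref{8.2} amounts to the same thing; (ii) is deduced from (i) via Lemma \ref{7.8} exactly as in the paper, and in the mixed case the paper likewise uses (i), (ii), $P_2$ and Lemma \ref{5.5} to force the edge label to be $s$ and $i_{\varphi(v')}=s$, whence the edge is the unique fusion edge. (One detail you inherit from the paper: actually carrying out the computation in (ii) gives coefficient $+1$ for $j=s+1$, since the $-c_sm^s$ in $f_{v''}$ cancels against the $c_sm^s$ in $c_s(r^s-r^{s+1})$, leaving $+c_sm^{s+1}$; this is the sign the paper itself uses in the proof of Theorem \ref{8.5}, so the $-1$ in the statement is a slip. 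Only the vanishing pattern matters for (iii).)

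Where you depart from the paper is in the $V^+$--$V^+$ and $V^-$--$V^-$ cases, which the paper dismisses with a bare appeal to the induction hypothesis; you are right that a descent statement is needed there, but your justification rests on a false premise. For $v\in V(\mathscr G^+)$ one knows only that $c_s\notin\Supp(f_v-h)$; single block linkage is defined on \emph{complete reduced} representatives, and in these $C_s$ is in general \emph{not} empty (the proof of Proposition \ref{8.2} fills $C_s$ with dominoes upon completion), so by rule (ii) of \ref{3.1} the entry $s$ does appear in the tableau. Compare the paper's example in \ref{4.6}: the complete tableau $(2,1,2,2)$ has every column non-empty although $c_3\notin\Supp(f_{\mathscr T}-h)$; only its deplete representative $(2,1,0,2)$ has $C_3$ empty. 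The conclusion you need, that the added block cannot carry the entry $s$, follows instead from (i) together with $P_3$: the edge gives $f_{\theta(v)}-f_{\theta(v')}=c_i(r^{i_v}-r^{i_{v'}})$, and both sides being free of $c_s$ forces $i\neq s$. What then still requires proof --- and is supplied neither by you nor, explicitly, by the paper --- is that a single block linkage between two classes of $H(\textbf{c}\setminus\{c_s\})$ corresponds, under the collapse bijection of Lemma \ref{8.1}, to a single block linkage in $H(\textbf{c}^-)$: the collapse is carried out on a representative with $C_s$ empty and must then be re-completed, so ``descends under column collapse'' is an assertion, not an argument. This is the one genuine gap; it is a gap the paper shares.
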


   \begin {proof}  Since both $\mathscr G(\textbf{c})$ and its image satisfy $P_3,P_4,P_5$, we can identify the function $f_v$ defined by an element $v \in V(\mathscr G(\textbf{c}))$ by that defined by $\theta(v)\in \mathscr G_{H(\textbf{c})}$ of $v$.  Moreover in this $f_v$ is exactly the function attached to the image $\theta(v)\in \mathscr G_{H(\textbf{c})}$ of $v$.

   View $f_v$ as being linear in the $m^i:i\in \hat{T}$ and in the $c_i:i\in T$.  Since there is no edge with label $c_s$ in $\mathscr G^+$, it follows that the coefficient of $c_sm_j$ equals that of $h$.  Hence (i). Then (ii) follows from (i) by Lemma \ref {7.8}(ii).

   %namely $-1$ in $\mathscr G^+$ if $j=s$ and $0$ otherwise.

   The proof of (iii) is by induction on $|T|=t$.  Through the induction hypothesis we can assume that the restrictions of $\theta$ to $\mathscr G^\pm$ are strict.  Identifying $\mathscr G^\pm$ with their images it suffices to consider an edge between an element of $\mathscr G^+$ and an element of $\mathscr G^-$.  In this we further identify $\mathscr G^-$ with $\varphi(\mathscr G^+)$.

 It follows by (i), (ii) and the definition of a link in $\mathscr G_H$ that there can only be an edge between $f_v$ and $f_{\varphi(v')}:v,v' \in \mathscr G^+$ if its label is $s$.  In this case set $i_v=j,i_{\varphi(v')}=k$.  Then by Lemma \ref {5.5} we must have $f_{\varphi(v')}-f_v =c_s(r^k-r^j)$.  On the other hand the coefficient of $m^k$ in $f_{\varphi(v')}$ must be zero by the definition of $i_{\varphi(v')}$.  Since the coefficient of $c_sm^k$ in $f_v$ is not zero unless $k=s$, this forces $k=s$.  Yet there can be at most one edge emanating from $\varphi(v')$ with label $c_s$.  Moreover when $k=i_{\varphi(v')}=s$, we have seen that there is exactly one. This has $v'$ as its second vertex with $i_{v'}=s+1$ and comes from an edge in $\mathscr G(\textbf{c})$.

  %We conclude that there are no other edges in $\mathscr G(\textbf{c})$ arising from the embeddings $\theta: \mathscr G^+, \mathscr G^- \hookrightarrow G_{H(\textbf{c})}$.

 This completes the proof of the lemma.

\end {proof}

\subsection{}\label{8.5}

\begin {thm} $\theta$ is a labelled graph isomorphism of $\mathscr G(\textbf{c})$ onto $\mathscr G_{H(\textbf{c})}$.
   \end {thm}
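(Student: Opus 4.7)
The plan is to deduce the theorem from Proposition~\ref{8.2} and Lemma~\ref{8.4}(iii), which together show that $\theta$ is a strict, label-preserving graph embedding; hence $\theta$ is already a labelled graph isomorphism onto its image, and the only remaining issue is surjectivity on vertices. I would proceed by induction on $t = |\textbf{c}|$, matching the binary fusion inductive structure of $\mathscr G(\textbf{c})$ developed in Section~\ref{7}.

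The base case $t=1$ is immediate by direct inspection: both graphs consist of two vertices joined by a single edge. For the inductive step, let $c_s$ be the unique maximum of $\textbf{c}$, so by binary fusion $V(\mathscr G(\textbf{c})) = V(\mathscr G^+) \sqcup V(\mathscr G^-)$ of cardinality $2\cdot 2^{t-1} = 2^t$. Split $H(\textbf{c}) = A \sqcup B$, where $A := H(\textbf{c}\setminus\{c_s\})$ consists of those $f \in H(\textbf{c})$ with $c_s \notin \Supp(f-h)$. By Lemma~\ref{8.1}, $A$ is in bijection with $H(\textbf{c}^-)$, so $|A| = 2^{t-1}$ by induction. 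Lemma~\ref{8.4}(i)--(ii) gives the disjoint inclusions $\theta(V^+) \subseteq A$ and $\theta(V^-) \subseteq B$; combined with strictness and the inductive hypothesis applied to $\mathscr G(\textbf{c}^-)$ (transported through $\Psi^+_s$ and Lemma~\ref{8.1}), $\theta|_{V^+}$ is a labelled graph isomorphism onto the full subgraph of $\mathscr G_{H(\textbf{c})}$ spanned by $A$.

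The remaining and main step is to show $B \subseteq \theta(V^-)$. I would argue via connectedness. By Lemma~\ref{6.5}, $\mathscr G_{H(\textbf{c})}$ is connected, so any $w \in B$ admits a path in $\mathscr G_{H(\textbf{c})}$ to $v_h \in A$, and along it there must occur a \emph{crossing edge} $(u,w')$ with $u \in A$ and $w' \in B$. By Lemma~\ref{5.5}, $f_{w'}-f_u = c_i(r^{i_{w'}}-r^{i_u})$ for some $i \in T$; for $w'$ to acquire $c_s$-dependence, given that $u \in A$, this forces $i = s$. Property $P_2$ then gives at most one label-$s$ edge at $u$, and the explicit tableau construction of the $c_s$-edges carried out at the end of the proof of Proposition~\ref{8.2} shows that at $u = \theta(v)$ such a label-$s$ edge exists precisely when $i_v = s+1$, in which case its other endpoint is $\theta(\varphi(v)) \in \theta(V^-_s)$. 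Thus every crossing edge out of $A$ already lies in $\theta(V(\mathscr G(\textbf{c})))$. Iterating inside $B$, using the analogous binary fusion $\mathscr G^- \cong \mathscr G(\textbf{c}^-)$ via $\Psi^-_s$ and the inductive isomorphism $\theta|_{V^-}$, pushes closure from $\theta(V^-_s)$ to all of $\theta(V^-)$, yielding $B \subseteq \theta(V^-)$.

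The hardest point to nail down rigorously is the exhaustive identification of label-$s$ edges out of $A$ in $\mathscr G_{H(\textbf{c})}$ with those arising from binary fusion in Proposition~\ref{8.2}: one must rule out any ``unaccounted'' label-$s$ single block linkage at $\theta(v)$ when $i_v \neq s+1$. This reduces, via Lemma~\ref{5.5} and the definition of single block linkage in~\ref{3.3}, to a combinatorial claim on complete tableaux representing elements of $A$: a quasi-extremal column of height one less than the maximal height whose half-domino adjunction produces an entry $s$ can only be the particular column singled out in Proposition~\ref{8.2}, and this is forced by Lemma~\ref{3.3.2} together with the height analysis of Section~\ref{2.3} (especially Lemma~\ref{2.3.3}) and the well-numberedness of complete tableaux.
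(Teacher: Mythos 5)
Your overall architecture — strictness from Proposition \ref{8.2} and Lemma \ref{8.4}(iii), induction on $t$ via binary fusion, identification of $\theta(V^+)$ with the full subgraph on $A=H(\textbf{c}\setminus\{c_s\})$ via Lemma \ref{8.1}, and then surjectivity via connectedness (Lemma \ref{6.5}) and an analysis of the crossing edges — is exactly the paper's, and the first half of your argument is sound. But the second half has a genuine gap, in two places. First, the claim that a label-$s$ edge of $\mathscr G_{H(\textbf{c})}$ exists at $u=\theta(v)\in A$ precisely when $i_v=s+1$ is not ``carried out at the end of the proof of Proposition \ref{8.2}''; that proof only constructs such an edge when $i_v=s+1$. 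Ruling out an unaccounted label-$s$ linkage at other vertices of $A$ is the heart of the matter, and the ingredients you cite (Lemma \ref{3.3.2}, Lemma \ref{2.3.3}, well-numberedness) do not suffice: in the full graph $\mathscr G_H$ such edges \emph{do} exist in abundance. What kills them is that the target must again lie in $H(\textbf{c})$: one first uses Lemma \ref{8.4}(i) to force $i_{w'}=s$, then completes the new tableau and invokes the sector condition (Lemma \ref{5.3}) to extract a relation $c_s<c_j$, which by the maximality of $c_s$ forces $j=s$ and hence, via well-numberedness, $i_u=s+1$; only then does $P_2$ finish. Your sketch omits the sector/positivity step entirely, and the combinatorial claim as you state it (about which quasi-extremal column ``produces an entry $s$'') is not the claim that is actually needed or provable.

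Second, and more seriously, ``iterating inside $B$'' does not follow from what you have established. Showing that every crossing edge from $A$ into $B$ lands in $\theta(V^-_s)$ only controls the gateway vertices of $B$; a vertex $w\in B\setminus\theta(V^-)$ could be joined to $v_h$ by a path whose $B$-portion enters $\theta(V^-)$ somewhere in the middle, so you must separately exclude edges of $\mathscr G_{H(\textbf{c})}$ from $\theta(V^-)$ to $B\setminus\theta(V^-)$. Strictness and the injectivity of $\theta|_{V^-}$ say nothing about such edges, and a counting argument is unavailable since $|H(\textbf{c})|=2^t$ is a \emph{consequence} of this theorem, not an input. The paper handles this with a full second case ($v'\in\mathscr G^-$): the dual identification \ref{8.2}$(**)$ shows any edge leaving $\theta(V^-)$ again has label $s$, Lemma \ref{8.4}(ii) forces $i_{w'}=s+1$, and a dual run of the tableau-plus-sector argument forces $i_{v'}=s$, contradicting $P_2$. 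This second case is of the same difficulty as the first and cannot be absorbed into the phrase ``pushes closure''; as written, your proof does not establish $B\subseteq\theta(V^-)$.
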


\begin {proof}  The proof is by induction on $t$.  It is trivial for $t=1$. Recall the notation used in \ref {8.1}.

 By the induction hypothesis and Lemma \ref {8.1}, we can identify $\{f_{v'}: v' \in \mathscr G^+\}$ with the elements of $H(\textbf{c})$ not having $c_s$ in their support.  Thus if $v \in G_{H(\textbf{c})} \setminus \mathscr G^+$ is linked to $v' \in \mathscr G^+$, we must have $f_v=f_{v'}+c_s(r^{i_v}-r^{i_{v'}})$.

 Similarly by \ref {8.2}$(**)$ we can identify $\{f_{v'}: v' \in \mathscr G^{-*}\}$ with the elements of $H(\textbf{c})^*$ not having $c_{t+1-s}$ in their support. Thus if
 $v \in G_{H(\textbf{c})^*} \setminus (\mathscr G^{-})^*$ is linked to $v' \in (\mathscr G^{-})^*$, we must have
 $f_v=f_{v'}+c_{t+1-s}(r^{i_v}-r^{i_{v'}})$, so then  $f^*_v=f^*_{v'}+c_s(r^{t+2-i_v}-r^{t+2-i_{v'}})$

%We conclude that if $f_v=f_{v'}+c_{i_{v,v'}}(r^{i_v}-r^{i_{v'}})$ with $v' \in \mathscr G_s(\textbf{c}^-)$ and $v \in \mathscr G_{H(\textbf{c}^-)}$ then $v \in \mathscr G_s(\textbf{c}^-)$ unless $i_{v,v'}=s$.

Suppose that $\theta$ is not surjective and let $\mathscr G'$ be the complement of its image of $\theta$ in $\mathscr G_{H(\textbf{c})}$.  Since $\mathscr G_{H(\textbf{c})}$ is connected (Lemma \ref {6.5}) we can choose $v$ in the complement of the image of $\theta$  and $v'$ in the image such that
 $$f_v=f_{v'}+c_{i_{v,v'}}(r^{i_v}-r^{i_{v'}}), \eqno{(*)}$$
whilst by the above $i_{v,v'}=s$.

 Suppose $v' \in \mathscr G^+$.  By Lemma \ref {8.4}(i) and $(*)$ we must have $i_v=s$ to cancel the coefficient of $m^{i_v}$ in $f_v$.  This in turn means that the coefficient of $m_s$ in $f_{v'}$ is $-c_s$.  In view of $(*)$ this forces $i_{v'}>i_v=s$.

 Now let $\mathscr T_{v'}$ be a complete tableau representing $f_{v'}$. Let $u$ be the height of $\mathscr T_{v'}$.

 If $u$ is odd (resp. even) then by Lemma \ref {5.4} we deduce that $C_{i_{v'}}$ has height $u$ and is the leftmost (resp. rightmost) column of $\mathscr T_{v'}$ of height $u$.  Consequently we obtain a tableau $\mathscr T_v$ representing $f_v$ only when $u$ is odd and when we add a block in $R_u$  containing $c_s$ to the column $C_{i_v}=C_s$ of $\mathscr T_{v'}$ which has height $u-1$.

 Let $c_j$ be the entry in the $u-1$ row of $C_{i_{v'}}$. Take the completion $\hat{\mathscr T}_v$ of $\mathscr T_v$.  This moves $c_s$ to the top of the rightmost column of $\hat{\mathscr T}_v$ of height $u$ which is a left neighbour to $C_{i_{v'}}$ at level $u$.  Then by Lemma \ref {5.3} we obtain $c_s<c_j$.  On the other hand $\mathscr T_{v'}$ is complete, hence well-numbered, $i_{v'}>s$ and $u-1$ is even, so we obtain $j=i_{v'}-1$.  Since $c_s >c$, for all $c \in \textbf{c}^-$ we deduce that $j=s$ and so $i_{v'}=s+1$. Yet there is already an edge with label $s$ from a vertex $v' \in \mathscr G^+$ with label $s+1$.  Its second vertex $v$ lies in $\mathscr G^-$ which by $P_2$ gives a contradiction.

 Suppose $v' \in \mathscr G^-$.   By Lemma \ref {8.4}(ii) and $(*)$ we must have $i_v=s+1$ to cancel the coefficient of $m^{i_v}$ in $f_v$.  This in turn means that the coefficient of $m^{s+1}$ in $f_{v'}$ is $c_s$.  In view of $(*)$ this forces $i_{v'}<i_v=s$.

 Now let $\mathscr T_{v'}$ be a complete tableau representing $f_{v'}$. Let $u$ be the height of $\mathscr T_{v'}$.

 If $u$ is odd (resp. even) then by Lemma \ref {5.4} we deduce that $C_{i_{v'}}$ has height $u$ and is the leftmost (resp. rightmost) column of $\mathscr T_{v'}$ of height $u$.  Consequently we obtain a tableau representing $f_v$ only when $u$ is even and when we add a box containing $c_s$ to the column $C_{i_v}=C_{s+1}$ of $\mathscr T_{v'}$, which moreover must have height $u-1$.  Let $c_j$ be the entry in the $u-1$ row of $C_{i_{v'}}$.  Then exactly as in the previous case we obtain $c_s<c_j$, through completion and Lemma \ref {5.3}.  On the other hand $\mathscr T_{v'}$ is complete, hence well-numbered, $i_{v'}<s$ and $u-1$ is odd, so we obtain $j=i_{v'}$.  Since $c_s >c$, for all $c \in \textbf{c}^-$ we deduce that $j=s$ and so $i_{v'}=s$. Yet there is already an edge with label $s$ from a vertex $v' \in \mathscr G^-$ with label $s$.  Its second vertex $v$ lies in $\mathscr G^+$ which by $P_2$ gives a contradiction.

\end {proof}

\subsection{The Preparation Theorem}\label{8.6}

The Preparation theorem can be most fully expressed by saying that $\mathscr G_{H(\textbf{c})}$ (or $\mathscr G(\textbf{c})$) satisfies $P_1-P_{10}$, which of course follows from the two previous sections. However to bring out the most crucial property of our construction to light we state it as follows.

An ordered pair $(f,f'):f \in H^{t+1}_k(\textbf{c}), f' \in H^{t+1}_j(\textbf{c})$ is said to satisfy the condition $S$ if
 $$f'-f= \sum_{i=1}^{s-1}c_{u_i}(r^{v_i}-r^{v_{i+1}}),$$
 for some $u_1,u_2,\ldots,u_{s-1} \in T, v_1,v_2,\ldots,v_s \in \hat{T}$,
  with $v_i=j, v_s=k$  \textit{and} such that the $c_{u_i}$ are increasing.

  %The Master Theorem in the title of this paper is the following remarkable result which can be thought of expressing the invariance of $H^{t+1}(\textbf{c})$.

  \begin {thm} For all $f' \in H^{t+1}(\textbf{c})$ and all $k \in \{1,2,\ldots, t+1\}$ there exists $f \in H^{t+1}_k(\textbf{c})$ such that $(f,f')$ satisfies condition $S$.
  \end {thm}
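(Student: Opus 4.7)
The plan is to derive the Preparation Theorem directly from the graph-theoretic machinery of Sections 7 and 8, by showing that condition $S$ is simply the graph-theoretic $S$-condition $P_7$ translated through the evaluation map $v \mapsto f_v$.

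First I would translate condition $S$ into graph language. Let $v' \in V(\mathscr G_{H(\textbf{c})})$ be the vertex corresponding to $f'$, so $i_{v'} = j$. I claim that $(f,f')$ satisfies condition $S$ precisely when there is an ordered path $v' = w_1 \to w_2 \to \cdots \to w_s = v$ in $\mathscr G_{H(\textbf{c})}$ ending at a vertex $v$ with $i_v = k$ and $f = f_v$. Indeed, by $P_3$ applied edge-by-edge along such a path,
$$f_{v'} - f_v \;=\; \sum_{i=1}^{s-1}\bigl(f_{w_i} - f_{w_{i+1}}\bigr) \;=\; \sum_{i=1}^{s-1} c_{i_{w_i,w_{i+1}}}\bigl(r^{i_{w_i}} - r^{i_{w_{i+1}}}\bigr),$$
and setting $u_i := i_{w_i,w_{i+1}}$ and identifying $v_i$ with $i_{w_i}$ gives an expression of exactly the required shape, with $v_1 = j$ and $v_s = k$. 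The increasing condition on the $c_{u_i}$ in condition $S$ is by definition the condition that the edge labels along the path are ordered compatibly with the linear order $\textbf{c}$, which is exactly the defining property of an ordered path once we know (see below) that the triads of $\mathscr G_{H(\textbf{c})}$ induce a partial order lifting to $\textbf{c}$.

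Second, I would invoke Theorem \ref{8.5}, which yields a labelled graph isomorphism $\theta : \mathscr G(\textbf{c}) \iso \mathscr G_{H(\textbf{c})}$ preserving both vertex and edge labels. Under $\theta$, ordered paths transfer in both directions, and the identification $\{f_{\theta(v)} : v \in \mathscr G(\textbf{c})\} = H^{t+1}(\textbf{c})$ shows that condition $S$ for some pair $(f,f')$ in $H(\textbf{c})$ is equivalent to the existence of an ordered path in the abstract $S$-graph $\mathscr G(\textbf{c})$ from $\theta^{-1}(v')$ to some vertex with label $k$.

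Third, I would apply the main structural facts about $\mathscr G(\textbf{c})$ established inductively in Section \ref{7}: property $P_6$ with the partial order lifting to $\textbf{c}$ (Lemma \ref{7.4}), and most importantly property $P_7$, the $S$-condition itself (Lemma \ref{7.5}). Property $P_7$ says exactly that for every vertex $v'$ and every target label $k \in \hat{T}$, there exists a vertex $v$ with $i_v = k$ and an ordered path from $v'$ to $v$. Since the triads of $\mathscr G(\textbf{c})$ impose precisely the inequalities of $\textbf{c}$ by Lemma \ref{7.4}, such an ordered path has edge labels $u_1,\ldots,u_{s-1}$ whose coefficients $c_{u_1},\ldots,c_{u_{s-1}}$ are strictly increasing in $\textbf{c}$. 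Combining this with the translation of Step 1 and the identification of Step 2 yields the desired $f \in H^{t+1}_k(\textbf{c})$ and completes the proof.

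The main conceptual obstacle in the argument has already been absorbed by the preceding sections: constructing an $S$-graph whose triads lift to the given linear order (Section \ref{7}) and then verifying that this abstract $S$-graph is realized as the tableau subgraph $\mathscr G_{H(\textbf{c})}$ (Theorem \ref{8.5}). Once those are in hand, the Preparation Theorem is essentially a bookkeeping translation between the abstract graph-theoretic condition $P_7$ and the analytic condition $S$ via the evaluation property $P_3$.
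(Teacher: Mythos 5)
Your proposal is correct and is essentially the paper's own argument: the paper states Theorem \ref{8.6} with no separate proof precisely because, as you spell out, it is the graph-theoretic $S$-condition $P_7$ (established for $\mathscr G(\textbf{c})$ in Lemma \ref{7.5}, with triads lifting to $\textbf{c}$ by Lemma \ref{7.4}) transported to $\mathscr G_{H(\textbf{c})}$ via the labelled isomorphism of Theorem \ref{8.5} and then read off through the evaluation property $P_3$, exactly as in the telescoping computation of \ref{6.7}. Your Step 1 translation of condition $S$ into ordered paths is the same bookkeeping the paper performs there.
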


\subsection {The Preparation Theorem for the Level One Case}\label{8.7}

 One may give a proof of the Preparation Theorem without using $S$-graphs when $f'$ has level $\leq 1$, that is to say when the corresponding tableau $\mathscr T'$ has only one row (possibly empty). For this we need the following construction which may help to understand the issues involved.

Fix $m \in \mathbb N^+$.  Choose a positive integer $s$ and a set $\textbf{k}_{m,s}$ of positive integers $1=k_1<k_2<\ldots <k_{s-1} \leq m-1$ by decreasing induction so that $k_{s-1}$ is maximal with the property that $c_{k_{s-1}-1} \leq c_{m-1}$ and that $k_{i-1}: i=s-1,s-2,\ldots,2$ is maximal with the property that $c_{k_{i-1}-1}\leq c_{k_{i}-1}$ (with the convention that $c_0=0$).  In this we shall say that $\textbf{k}_{m,s}$ is adapted to $\textbf{c}$ at distance $m$.  For example if $m=t+1$ and the $c_i$ are increasing then $s=t+1$ and $k_i:=i: i=1,2,\ldots,t$.

\begin {lemma}  Fix $u,u' \in \{1,2,\ldots,t+1\}$ distinct and $f' \in H_{u'}(\textbf{c})$.  If $f'$ has level $\leq 1$ then there exists $f \in H_u(\textbf{c})$ such that $f-f'$ satisfies condition $S$.
\end {lemma}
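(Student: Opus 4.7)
The plan is to construct $f$ explicitly as the function $f_{\mathscr T}$ attached to a complete tableau $\mathscr T \in H_u(\textbf{c})$, then verify by direct computation that $\mathscr T \in H(\textbf{c})$ and that $f' - f$ telescopes in the form required by $S$. Since $f'$ has level $\leq 1$, the tableau $\mathscr T'$ is determined by its non-empty columns $u' = j_1 < j_2 < \cdots < j_r = t+1$ (the empty tableau meaning $r=0$, $u' = t+1$); by \ref{4.4},
\[
f' = h + \sum_{i=1}^{r-1} c_{j_i}(r^{j_i} - r^{j_{i+1}}),
\]
and the hypothesis $\mathscr T' \in H(\textbf{c})$ is the collection of inequalities $c_k > c_{j_i}$ for all $k \in (j_i, j_{i+1})$, coming from the partial order \ref{3.2.1}(**) at level $1$.

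For $u < u'$ I would take $\mathscr T$ to be the level-$1$ tableau whose non-empty columns are $u = \ell_1 < \ell_2 < \cdots < \ell_p = u' < j_2 < \cdots < j_r = t+1$, with the new positions $\ell_2, \ldots, \ell_{p-1}$ built greedily: $\ell_{i+1}$ is the smallest index in $(\ell_i, u']$ with $c_{\ell_{i+1}} < c_{\ell_i}$, terminating when $\ell_p = u'$. This is an adapted-set-type construction in the same spirit as $\textbf{k}_{m,s}$ (with $m$ related to $u'-u+1$), and its maximality clause guarantees both $c_{\ell_1} > c_{\ell_2} > \cdots > c_{\ell_{p-1}}$ strictly and $c_k > c_{\ell_i}$ for all $k \in (\ell_i, \ell_{i+1})$; the latter places $\mathscr T \in H_u(\textbf{c})$. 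Using \ref{4.4} one then computes $f' - f = \sum_{i=1}^{p-1} c_{\ell_{p-i}}(r^{\ell_{p-i+1}} - r^{\ell_{p-i}})$, which after setting $v_i = \ell_{p-i+1}$, $u_i = \ell_{p-i}$ is precisely condition $S$ with $v_1 = u'$, $v_p = u$ and the $c_{u_i}$ increasing by the strict decrease of the $c_{\ell_i}$.

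The case $u > u'$ is the main obstacle. Now the strongly extremal column of $\mathscr T$ must lie to the right of $u'$, so one cannot simply prolong $\mathscr T'$; a level-$1$ tableau often suffices but not always, since the existing order on $\{c_{j_i}\}$ can obstruct the increasing chain of coefficients required by $S$ and force $\mathscr T$ to have level $2$ or higher. My approach would be to again let the adapted set $\textbf{k}_{m,s}$ (with $m$ chosen relative to the position of $u$) prescribe the positions of the extra columns and dominos to adjoin to a base tableau, with each right-odd domino contributing to $f$ a correction term of the form $(c_a - c_b)(r^p - r^q)$ as computed in Lemma \ref{6.6}, absorbing the coefficient gaps that obstruct a pure level-$1$ solution. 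The combinatorial core, and where the real work lies, is to check that the maximality built into $\textbf{k}_{m,s}$ forces the resulting chain to be simultaneously compatible with the inequalities inherited from $\mathscr T'$ and with the monotone $S$-condition in every sector $\textbf{c}$; a useful secondary tool is the duality $\mathscr T \mapsto \mathscr T^*$ of Lemma \ref{4.4}, which swaps $(u,u')$ with $(t+2-u, t+2-u')$ and should reduce several subcases to dualised instances of $u < u'$.
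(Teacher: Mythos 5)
Your treatment of the case $u<u'$ is essentially correct and complete: since $u'$ is the leftmost non-empty column of the level-one tableau $\mathscr T'$, prolonging the row to the left along a greedily chosen subsequence $u=\ell_1<\cdots<\ell_p=u'$ with strictly decreasing coefficients produces a level-one tableau lying in $H_u(\textbf{c})$, and the telescoped difference has increasing coefficients as condition $S$ requires. (Minor care is needed at a step where no index in $(\ell_i,u']$ carries a smaller coefficient; one then simply lets $u'$ be the next non-empty column, and the relations of \ref{3.2.1} are still satisfied.) This half is close in spirit to the paper's adapted set $\textbf{k}_{m,s}$, and arguably cleaner than the paper's reduction.

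The genuine gap is the case $u>u'$, which is where the entire difficulty of the lemma lies and which you explicitly leave as a plan rather than a proof. Here no level-one tableau can serve: a level-one element of $H_u$ has no non-empty column to the left of $u$, so $f-f'$ would have to subtract the contributions of the columns of $\mathscr T'$ in $[u',u)$, destroying positivity. The paper's construction is genuinely two-dimensional: after normalizing (via duality, collapsing empty columns and induction on $t$) to the situation where $C'_1$ is non-empty, $C'_m$ is the first empty column and $C'_n$ the first non-empty column to its right with $u=n-1$, one adjoins \emph{level-two} blocks to the columns $C_1,C_{k_2},\ldots,C_{k_{s-1}},C_n$ prescribed by the adapted set $\textbf{k}_{m,s}$, checks that each intermediate tableau stays in $H(\textbf{c})$ using the level-two relations of \ref{3.2.1}, and finally removes the top box over $C_n$ to land in $H_{n-1}(\textbf{c})$; the chain of coefficients $c_{k_2-1}\leq\cdots\leq c_{k_{s-1}-1}\leq c_{m-1}\leq c_{n-1}$ is increasing precisely because of the maximality built into $\textbf{k}_{m,s}$ together with the hypothesis $f'\in H(\textbf{c})$, which supplies $c_{m-1}\leq c_{n-1}$. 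Your sketch for this case also contains a conceptual slip: you propose to generate the correction terms $(c_a-c_b)(r^p-r^q)$ by adjoining right-odd dominoes, but domino adjunction leaves $f_{\mathscr T}$ unchanged by the class-function property (Lemma \ref{4.5}); such terms arise from single level-two blocks placed over existing level-one blocks (as in the triad computation of Lemma \ref{6.6}), and specifying exactly where to place them and why the resulting inequalities are consistent with the sector $\textbf{c}$ is the content you have not supplied.
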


\begin {proof}  The assertion is easily verified for $h$ itself (for any choice of $\textbf{c}$).  Then by duality it holds when $\mathscr T'$ is a complete single row. Then by collapsing empty columns and induction on $t$ it suffices to consider the case when $u \notin \Supp f'$ and is maximal with this property in the leftmost connected set of empty columns of $\mathscr T'$.   Suppose $C'_m$ is the first empty column of $f'$ and $C'_n$ the first non-empty column to the right of the former (which exists by the first matching condition.) In particular $u=n-1$.

 One easily reduces to the case $m>1$.  Indeed suppose $m=1$.  In this case we obtain $f$ from $f'$ by adding one block to $C'_{n-1}$.  Then $f\in H_{n-1}(\textbf{c})$ and $f-f'=c_{n-1}(r^{n-1}-r^n)$.

Now suppose that $m>1$, so then $C'_1$ is non-empty and hence the strongly extremal column of $\mathscr T'$. Add a block to this column at level $2$.    Let $\textbf{k}_{m,s}$ be adapted to $\textbf{c}$ at distance $m$ and set $k_s=n$. Set $f_1=f'$ and define $f_{i+1}: i=1,2,3,\ldots,f_s$ inductively by adjoining to $f_i$ a block at level two to $C_{k_{i+1}}$ (which by choice of the $k_j$ has height one).  In this it is immediate that
$$f_{i+1}-f_i= c_{k_{i+1}-1}(r^{k_{i+1}}-r^{k_{i}}):i=1,2,\ldots,s-2, \quad f_{s}-f_{s-1}=c_{m-1}(r^{k_s}-r^{k_{s-1}}).\eqno{(*)}$$

Moreover given that $f_{i-1} \in H(\textbf{c}):s-1 \geq i\geq 2$ and noting that by construction $c_{k_i-1}\leq c_{k_{i-1}},c_{k_{i-1}+1},\ldots,c_{k_i-2}$, we conclude from \ref {3.2.1} that $f_{i} \in H(\textbf{c}):i=2,3,\ldots,s-1$.

Finally note that to the diagram of $f_{s}$ we may adjoin left vertical dominoes to the empty columns with $x$ co-ordinates $m,m+1,\ldots,n-1$ without changing $f_s$. Through \ref {5.3} and since already $f_{s-1} \in H(\textbf{c})$, it follows that $f_s \in H(\textbf{c})$. Finally let $f=f_{s+1}$ be given by the diagram obtained from that of $f_{s}$ by removing the box above the column with $x$ co-ordinate $n$. Then by Lemma \ref {5.4}, we conclude that  $f \in H_{n-1}(\textbf{c})$. Moreover
$$f_{s+1}-f_{s}=c_{n-1}(r^{n-1}-r^{k_s}).\eqno {(**)}$$

The condition that $f' \in H(\textbf{c})$ requires that $c_{m-1}\leq c_m, c_{m+1},\ldots,c_{n-1}$, in particular that $c_{m-1} \leq c_{n-1}$.  By construction the $c_{k_i-1}:i=2,\ldots,s-1$ are increasing and $c_{k_{s-1}-1} \leq c_{m-1}$.  We conclude through $(*)$ and $(**)$ above that $f-f'$ satisfies condition $S$.

\textbf{Remarks}.  The level one case has the advantage that we have less trouble with equivalence relations on tableaux.  However the main advantage (which is not so apparent from this example) is that we do not have to consider that $f$ may have had antecedents which already fix the ordered path we are trying to construct.  For example suppose we take the order relation $2<1<3<4$.  Then if one starts from the diagram $(1,1,1,0,1)$ and follows the above procedure, then we obtain the path $(1,1,1,0,1) \stackrel{2}{\rightarrow}(2,1,2,0,1)\stackrel{3}{\rightarrow}(2,1,2,0,2)=(2,1,2,2,2)\stackrel{4}{\rightarrow}(2,1,2,2,1)$, which is fine.  On the other hand suppose we start at $(2,1,2,2,2)$.  Then we may simply take the second and third steps in the above paths to obtain an element lying in $H_5$.  However if we did not know a priori that this path existed and start with the path try the path $(2,1,2,2,2)\stackrel{2}{\rightarrow}(1,1,1,0,1)$ which is not illogical considering that $2$ is the smallest element of $T$, then one cannot proceed to an element of $H_5$ by completing this first step to an ordered chain.  Briefly speaking the graph of links gives a needed global perspective to the possible ordered paths.

\end {proof}

\section{Indices}\label{9}

\subsection {Index of Notations}\label {9.1}

Symbols appearing frequently are given below in the paragraph they are first defined.

\ref {1.1}.  $\pi, W, e_\alpha, f_\alpha$.

\ref {1.2}.  $B(\infty)$.

\ref {1.9}.  $C_t$.

\ref {2.1} $T, \hat{T},\mathscr D, C_i,R_i,ht_\mathscr D$.

\ref {2.2} $\mathscr D^*$.

\ref {2.4} $H^{t+1},H,H_j$.

\ref {3.1} $\mathscr T, B(i,j),b(i,j),\varphi_j,\theta_j$.

\ref {3.2.1} $P(\mathscr T)$.

\ref {3.2.2} $L(\mathscr T)$.

\ref {3.3.3} $\mathscr G_H, i_v,i_{(v,v')}$.

\ref {4.2} $\textbf{c}, m^i,r^i, h, \mathscr T_h, H(\textbf{c})$.

\ref {4.4} $f_{R_s},f_\mathscr T$.

\ref {4.6} $\Supp$.

\ref {5.2} $P_s,f_{P_s}$.

\ref {6.1} $\mathscr G_H, \mathscr G_{H(\textbf{c})}, V(\mathscr G), E(\mathscr G)$.

\ref {6.2} $c_{v,v'}$.

\ref {7.1} $\mathscr G(\textbf{c})$.

\ref {7.2} $\textbf{c}^-, \Psi^\pm_s, \varphi$.

\subsection {Index of Notions}\label {9.2}

Notions used frequently are listed below where they first appear.

\ref {2.1} Diagrams, height function.

\ref {2.2} Extremal columns, boundary conditions, duality.

\ref {2.3} Dominoes, deplete diagrams, complete diagrams, strongly extremal column, half domino adjunction.

\ref {3.1} Tableaux, blocks, extremal blocks, well-numbered.

\ref {3.3}  Single block linkage, quasi-extremal columns, graph of links.

\ref {4.2} Sector.

\ref {4.3} Duality of functions.

\ref {6.3}  Pointed chain.

\ref {6.4}  Marked vertex.

\ref {6.6}  Triads.

\ref {6.7}  Ordered path, $S$-condition,  $S$-graphs.

\ref {7.2}  Binary fusion.

\end {document}